\DeclareMathOperator{\ad}{ad}
\DeclareMathOperator{\Id}{Id}
\DeclareMathOperator{\diag}{diag}
\DeclareMathOperator{\Ad}{Ad}
\DeclareMathOperator{\Aut}{Aut}
\DeclareMathOperator{\trace}{trace}
\DeclareMathOperator{\Ric}{Ric}
\newtheorem{theorem}{Theorem}
\newtheorem{prop}{Proposition}
\newtheorem{lemma}{Lemma}
\newtheorem{corollary}{Corollary}
\newtheorem{problem}{Problem}
\theoremstyle{definition}
\newtheorem{definition}{Definition}
\newtheorem{remark}{Remark}
\newtheorem{example}{Example}
\begin{document}
\title[]{Homogeneous spaces with geodesic orbit Riemannian metrics and with integrable invariant distributions}
\author{V.\,N.\,Berestovskii, Yu.\,G.\, Nikonorov}
\thanks{The work of the first author was carried out within the framework of the State Contract to the IM SB RAS, project FWBF-2026-0022.}
\address{Berestovskii Valerii Nikolaevich}
\address{Sobolev Institute of Mathematics SB RAS,
\newline 4 Acad. Koptyug Ave, Novosibirsk, 630090, Russia}
\email{vberestov@inbox.ru}
\address{Nikonorov Yurii Gennadievich}
\address{Southern Mathematical Institute of VSC RAS,  \newline
53 Vatutina St., Vladikavkaz, 362025, Russia}
\email{nikonorov2006@mail.ru}

\begin{abstract}
We consider homogeneous spaces of Lie groups with compact stabilizer subgroups of two types:
those with integrable invariant distributions and those with geodesic orbit invariant Riemannian metrics.
The latter means that for an arbitrary invariant Riemannian metric on the space, every geodesic is an orbit of
a 1-parameter subgroup of the isometry group.
We found several homogeneous spaces of the first type that are not spaces of the second type.
Among them there are several homogeneous spaces that admit invariant Einstein metrics.

\vspace{2mm}
\noindent
2020 {\it Mathematical Subject Classification}:
Primary 53C20, 53C25, 53C30; Secondary 22E15, 22E25

\vspace{2mm} \noindent {\it Key words and phrases}: Einstein space, geodesic orbit manifold, geodesic orbit space, homogeneous geodesic,
homogeneous space, integrable invariant distributions, rigid GO-space,
solvmanifold, strong subalgebra, symmetric space

\end{abstract}

\maketitle


\section*{Introduction}

It was stated in \cite{Ber1989} and proved in \cite{Ber1989a} that
the homogeneous space $G/H$ of a connected Lie group $G$ by a compact
subgroup $H$ admits no invariant Carnot-Caratheodory (sub-Riemannian) metric if and only if $G/H$ is a homogeneous space with integrable invariant distributions.
The latter means that every invariant distribution on $G/H$ is integrable (involutive).

As first examples of such spaces in \cite{Ber1989} were indicated
symmetric spaces of \'E.~Cartan \cite{Cart1926}, isotropy irreducible spaces, and Lie groups $G$ (in the case of trivial subgroup $H$) of any dimension $\geq 2$
such that every left-invariant Riemannian metric on $G$ is flat (when $G$ is commutative) or has constant negative sectional curvature \cite{Milnor1976}.

Earlier, O.~V.~Manturov in \cite{Mant1966} (1966), J.~A.~Wolf in \cite{Wolf1968} (1968), and M.~Kr\"amer in~\cite{Kram} (1975) classified  strongly isotropy irreducible
spaces (when $H$ is connected). Later, M.~Wang and W.~Ziller classified all isotropy irreducible spaces in \cite{WanZil1991}; see also \cite{WanZil1993}.

It is proved in \cite{Ber1992, Ber1995} that every compact simply connected homogeneous space with integrable invariant distributions is a direct product
of compact simply connected (strongly) isotropy irreducible spaces.

Some other homogeneous spaces with integrable invariant distrbutions were found in \cite{Gor2008} and \cite{BerGor2014}. Nevertheless,
the problem of classifying homogeneous spaces with integrable invariant distributions, posed in \cite{Ber1989}, has not been solved. It must be very difficult
as  show partial but nontrivial examples of such spaces mentioned above.

It is known that every invariant Riemannian metric $\mu$ on any homogeneous space $M$, mentioned in the second paragraph above, is geodesic orbit, that is,
any geodesic in $(M,\mu)$ is the orbit of some 1-parameter subgroup of the motion group of $(M,\mu)$. In \cite{BerGor2014} and \cite{BerNik2020}, was posed
the question whether every invariant Riemannian metric on any homogeneous space with integrable invariant distributions is geodesic orbit.

The main result of this paper is negative answer to the last question in  the general case.
Among the relevant examples are some homogeneous spaces admitting Einstein metrics of negative Ricci curvature.

\section{Homogeneous spaces with integrable invariant distributions: some known results}

In what follows, $(M,(\cdot,\cdot))$ denotes a connected complete Riemannian manifold of class $C^{\infty}$
with the inner product $(\cdot,\cdot)$ defined on the tangent bundle $TM$;
$I(M,(\cdot,\cdot))$ is the full isometry group of $(M,(\cdot,\cdot))$;
$M=G/H$ is the homogeneous space of a connected Lie group $G$ with respect to its compact subgroup $H$, $o =eH\in M$.

\begin{remark}
It is assumed that $G/H$ is effective, i.e. $H$ does not contain non-trivial normal subgroups of $G$.
\end{remark}

\begin{definition}
A homogeneous space with integrable (involutive) invariant distributions is a space $G/H$ such that every $G$-invariant distribution on $G/H$ is integrable (involutive).
\end{definition}

One can use the property A) from the following result instead of the original  definition of homogeneous spaces with involutive invariant distributions.

\begin{prop}
[\cite{Ber1989}]\label{prop1}
Let $(\mathfrak{g}, [\cdot, \cdot])$ be the Lie algebra of the Lie group $G$, $\mathfrak{h}\subset \mathfrak{g}$
be the Lie algebra of the Lie subgroup $H$. Then $G/H$ is a homogeneous manifold with integrable invariant distributions if and only if {\rm A)} every vector subspace
of $\mathfrak{q}\subset \mathfrak{g}$ such that $\mathfrak{h}\subset \mathfrak{q}$ and $\Ad(H)(\mathfrak{q})\subset \mathfrak{q}$, is a Lie subalgebra of
$(\mathfrak{g}, [\cdot, \cdot])$, i.e.
$[\mathfrak{q},\mathfrak{q}]\subset \mathfrak{q}$.
\end{prop}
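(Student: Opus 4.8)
The plan is to prove the equivalence via the standard correspondence between $G$-invariant distributions on $G/H$ and $\Ad(H)$-invariant subspaces of $\mathfrak{g}$ containing $\mathfrak{h}$, together with the characterization of integrability by the Frobenius condition expressed at the Lie algebra level. First I would recall that a $G$-invariant distribution $\mathcal{D}$ on $G/H$ is completely determined by its value $\mathcal{D}_o \subset T_oM$ at the base point $o = eH$, and that $\mathcal{D}_o$ may be any subspace invariant under the linear isotropy action of $H$. Identifying $T_oM$ with $\mathfrak{g}/\mathfrak{h}$ (using compactness of $H$ to pick an $\Ad(H)$-invariant complement $\mathfrak{m}$ if convenient, though this is not essential), the $H$-invariant subspaces of $T_oM$ correspond bijectively to subspaces $\mathfrak{q}$ with $\mathfrak{h} \subset \mathfrak{q} \subset \mathfrak{g}$ and $\Ad(H)(\mathfrak{q}) \subset \mathfrak{q}$, the correspondence being $\mathfrak{q} \leftrightarrow \mathfrak{q}/\mathfrak{h}$.

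Next I would translate integrability of $\mathcal{D}$ into an algebraic condition on $\mathfrak{q}$. Since $\mathcal{D}$ is $G$-invariant, it is integrable if and only if it is integrable at the single point $o$, and by the Frobenius theorem this holds if and only if the space of $G$-invariant vector fields tangent to $\mathcal{D}$ is closed under the Lie bracket, when evaluated at $o$. Here I would use the identification of $G$-invariant vector fields on $G/H$ (more precisely, the fundamental vector fields coming from $\mathfrak{g}$, or sections of $\mathcal{D}$ built from them) with elements of $\mathfrak{g}$, under which the Lie bracket of vector fields corresponds, up to sign and a correction term lying in $\mathfrak{h}$ that is irrelevant modulo $\mathfrak{h}$, to the Lie bracket in $\mathfrak{g}$. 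Concretely, for $X, Y \in \mathfrak{q}$, the bracket of the associated vector fields evaluated at $o$ is the image of $[X,Y]$ in $\mathfrak{g}/\mathfrak{h} = T_oM$; hence tangency to $\mathcal{D}_o = \mathfrak{q}/\mathfrak{h}$ forces $[X,Y] \in \mathfrak{q}$. Thus $\mathcal{D}$ is integrable precisely when $[\mathfrak{q},\mathfrak{q}] \subset \mathfrak{q}$, i.e. $\mathfrak{q}$ is a subalgebra. Since the bracket of $G$-invariant vector fields spanning $\mathcal{D}$ generates, together with $\mathcal{D}$ itself at each point by homogeneity, the full Lie hull of sections, this condition is both necessary and sufficient for involutivity.

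Finally I would assemble the two directions. If $G/H$ has integrable invariant distributions, then for every admissible $\mathfrak{q}$ the associated distribution $\mathcal{D}$ is integrable, so $[\mathfrak{q},\mathfrak{q}] \subset \mathfrak{q}$; this gives A). Conversely, if A) holds, then every $G$-invariant distribution arises from some admissible $\mathfrak{q}$, which is then a subalgebra, whence the distribution is involutive by the bracket computation above; this gives that $G/H$ is a homogeneous space with integrable invariant distributions.

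The main obstacle I expect is making the bracket identification fully precise: one must be careful that the Lie bracket of the fundamental (Killing) vector fields on $G/H$ corresponds to minus the bracket in $\mathfrak{g}$, and that the $\mathfrak{h}$-component dropped in passing to $\mathfrak{g}/\mathfrak{h}$ does not affect tangency to $\mathcal{D}$ — which is exactly why $\mathfrak{h} \subset \mathfrak{q}$ is imposed. A secondary subtlety is checking that $G$-invariant sections of $\mathcal{D}$, evaluated at points of $G/H$, genuinely detect involutivity (rather than merely some weaker condition); here the compactness of $H$ and the resulting reductive decomposition, or alternatively a direct argument using that $G$ acts transitively, ensures there are enough invariant local sections near $o$. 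Everything else is routine linear algebra and an application of the Frobenius theorem.
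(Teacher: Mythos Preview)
The paper does not prove this proposition; it is quoted from \cite{Ber1989}, with the remark that details are in \cite{Ber1989a}. So there is no in-paper argument to compare against, and your outline has to stand on its own.

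Your bijection between $G$-invariant distributions on $G/H$ and $\Ad(H)$-invariant subspaces $\mathfrak{q}$ with $\mathfrak{h}\subset\mathfrak{q}\subset\mathfrak{g}$ is correct. The gap is in the Frobenius step, where you oscillate between ``$G$-invariant vector fields on $G/H$'' and ``fundamental vector fields coming from $\mathfrak{g}$''. Neither choice works as written. A $G$-invariant vector field on $G/H$ corresponds to an $\Ad(H)$-fixed vector in the reductive complement $\mathfrak{m}$; in general there are far too few of these to span $\mathcal{D}_o$, so they cannot test involutivity. Fundamental (Killing) fields $X^*$ for $X\in\mathfrak{q}$ do span $\mathcal{D}_o$, but they are \emph{not} sections of $\mathcal{D}$: since $X^*_{gH}=(L_g)_*\bigl((\Ad(g^{-1})X)^*_o\bigr)$, one has $X^*_{gH}\in\mathcal{D}_{gH}$ iff $\Ad(g^{-1})X\in\mathfrak{q}$, which for $g$ near $e$ forces $[\mathfrak{g},X]\subset\mathfrak{q}$ and hence fails unless $\mathfrak{q}$ is close to being an ideal. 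Thus the bracket $[X^*,Y^*]=-[X,Y]^*$ cannot be fed into Frobenius for $\mathcal{D}$. Your ``secondary subtlety'' paragraph correctly flags this, but the proposed fixes (reductive decomposition, transitivity) do not produce the needed sections.

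The standard repair is to lift to $G$. The pullback $\widetilde{\mathcal{D}}=(d\pi)^{-1}\mathcal{D}$ is the left-invariant distribution on $G$ with $\widetilde{\mathcal{D}}_e=\mathfrak{q}$; it is right-$H$-invariant because $\Ad(H)\mathfrak{q}\subset\mathfrak{q}$, and it contains the vertical bundle of $\pi$ because $\mathfrak{h}\subset\mathfrak{q}$. The left-invariant fields $X^L$, $X\in\mathfrak{q}$, are genuine global sections of $\widetilde{\mathcal{D}}$ and satisfy $[X^L,Y^L]=[X,Y]^L$, so $\widetilde{\mathcal{D}}$ is involutive iff $[\mathfrak{q},\mathfrak{q}]\subset\mathfrak{q}$. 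Since $\pi$ is a surjective submersion with vertical bundle contained in $\widetilde{\mathcal{D}}$, involutivity of $\widetilde{\mathcal{D}}$ and of $\mathcal{D}$ are equivalent. With this replacement your outline goes through.
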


\begin{corollary}[\cite{Ber1989}] \label{cor1}
If $H$ is a connected Lie group, in particular if $G/H$ is simply connected, then $G/H$ is a homogeneous manifold with integrable invariant distributions
if and only if every vector subspace of $\mathfrak{q}\subset \mathfrak{g}$ such that $\mathfrak{h}\subset \mathfrak{q}$ and
$[\mathfrak{h},\mathfrak{q}]\subset \mathfrak{q},$ is a Lie subalgebra of $(\mathfrak{g}, [\cdot, \cdot])$, i.e. $[\mathfrak{q},\mathfrak{q}]\subset \mathfrak{q}$.
\end{corollary}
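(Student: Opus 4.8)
The plan is to deduce Corollary~\ref{cor1} from Proposition~\ref{prop1} by showing that, once $H$ is connected, the condition $\Ad(H)(\mathfrak{q})\subset\mathfrak{q}$ occurring in Proposition~\ref{prop1} is equivalent to the infinitesimal condition $[\mathfrak{h},\mathfrak{q}]\subset\mathfrak{q}$. First I would dispose of the ``simply connected'' alternative in the hypothesis: from the homotopy exact sequence of the fibration $H\to G\to G/H$, together with the connectedness of $G$ and the assumption that $G/H$ is simply connected, one gets that $\pi_0(H)$ is trivial, i.e. $H$ is connected. So in all cases it suffices to treat connected $H$.

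Next, fix a vector subspace $\mathfrak{q}\subset\mathfrak{g}$ with $\mathfrak{h}\subset\mathfrak{q}$. If $\Ad(H)(\mathfrak{q})\subset\mathfrak{q}$, then for every $X\in\mathfrak{h}$ and $Y\in\mathfrak{q}$ the curve $t\mapsto\Ad(\exp(tX))Y$ takes values in the (finite-dimensional, hence closed) subspace $\mathfrak{q}$, so its derivative at $t=0$, which equals $\ad(X)Y=[X,Y]$, also lies in $\mathfrak{q}$; thus $[\mathfrak{h},\mathfrak{q}]\subset\mathfrak{q}$. Conversely, if $[\mathfrak{h},\mathfrak{q}]\subset\mathfrak{q}$, then $\ad(X)^n(\mathfrak{q})\subset\mathfrak{q}$ for all $n\geq 0$ and all $X\in\mathfrak{h}$, whence $\Ad(\exp X)=e^{\ad X}$ maps $\mathfrak{q}$ into itself; since $H$ is connected it is generated by $\exp(\mathfrak{h})$, and therefore $\Ad(H)(\mathfrak{q})\subset\mathfrak{q}$.

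Combining the two implications, for connected $H$ the family of subspaces $\mathfrak{q}$ with $\mathfrak{h}\subset\mathfrak{q}$ and $\Ad(H)(\mathfrak{q})\subset\mathfrak{q}$ coincides with the family of subspaces $\mathfrak{q}$ with $\mathfrak{h}\subset\mathfrak{q}$ and $[\mathfrak{h},\mathfrak{q}]\subset\mathfrak{q}$, and substituting this into Proposition~\ref{prop1} yields the corollary. The only step that requires any care is the converse implication in the equivalence above, where connectedness of $H$ is genuinely used (for a disconnected $H$ an $\ad(\mathfrak{h})$-invariant subspace need not be $\Ad(H)$-invariant); everything else is routine Lie theory.
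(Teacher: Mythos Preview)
Your argument is correct: the reduction to Proposition~\ref{prop1} via the equivalence, for connected $H$, between $\Ad(H)$-invariance and $\ad(\mathfrak{h})$-invariance of a subspace is exactly the standard route, and your handling of the simply connected case through the homotopy exact sequence of $H\to G\to G/H$ is fine. The paper itself does not supply a proof of Corollary~\ref{cor1}; it simply records the statement and refers to \cite{Ber1989,Ber1989a}, so there is nothing further to compare.
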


The term ``homogeneous space with integrable invariant distributions'' for $G/H$ at first time appeared in \cite{Ber1992}. In \cite{Ber1989}, the condition A)
of Proposition \ref{prop1} for $G/H$ is formulated in the point 1) of Theorem 7,  Corollary \ref{cor1} is the part 2) of Theorem 7.

In \cite{Ber1989}, connected Lie groups $G$ with integrable left-invariant distributions were also described.

According to Corollary \ref{cor1}, Lie groups $G$
correspond to pairs $(\mathfrak{g}, \mathfrak{h}=0)$ such that any vector subspace of $\mathfrak{g}$ is a subalgebra of  the Lie algebra $\mathfrak{g}$.
All such pairs $(\mathfrak{g}, \mathfrak{h})$ were classified in \cite{Milnor1976}. These are either commutative Lie algebras $\mathfrak{g}$
or the Lie algebra $\mathfrak{g}_n,$ $n\geq 2,$ of the Lie group $G_n$, generated by the
parallel translataions and homotheties of $\mathbb{R}^{n-1}$
(see ``Special Example 1.7'', pp. 298--299 in \cite{Milnor1976}).
Every Lie group $G_n$, $n\geq 2,$ is simply connected, and any left-invariant Riemannian metric on $G_n$ has constant negative sectional curvature \cite{Milnor1976},
hence is isometric to the Lobachevsky space, which is a Riemannian symmetric space.

\begin{prop}
[\cite{Ber1989}]
\label{iigo}
Riemannian symmetric spaces {\rm(\cite{Cart1926})} and isotropy irreducible homogeneous spaces {\rm(\cite{Mant1966, Wolf1968, Kram, WanZil1991, WanZil1993})}
are homogeneous spaces with integrable invariant distributions.
\end{prop}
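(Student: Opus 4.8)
The plan is to verify, in each of the two cases, the purely algebraic condition A) of Proposition~\ref{prop1}; by that proposition this is equivalent to the assertion, so no geometry beyond the structural bracket relations is needed. Since $H$ is compact, fix an $\Ad(H)$-invariant complement $\mathfrak{m}$ to $\mathfrak{h}$ in $\mathfrak{g}$, so that $\mathfrak{g}=\mathfrak{h}\oplus\mathfrak{m}$ and $\Ad(H)\mathfrak{m}\subset\mathfrak{m}$ (in the symmetric case one may instead take $\mathfrak{m}$ to be the $(-1)$-eigenspace of the differential of the symmetry at $o$). I would use throughout the following elementary reduction: if $\mathfrak{q}\subset\mathfrak{g}$ satisfies $\mathfrak{h}\subset\mathfrak{q}$ and $\Ad(H)\mathfrak{q}\subset\mathfrak{q}$, then $\mathfrak{q}=\mathfrak{h}\oplus\mathfrak{p}$ with $\mathfrak{p}:=\mathfrak{q}\cap\mathfrak{m}$ (a decomposition of vector spaces, by the modular law, since $\mathfrak{h}\subset\mathfrak{q}$), and $\mathfrak{p}$, being the intersection of two $\Ad(H)$-invariant subspaces, is again $\Ad(H)$-invariant; moreover, differentiating $\Ad(\exp tX)$ at $t=0$ along $X\in\mathfrak{h}=\Lie(H)$ shows $\mathfrak{q}$ is $\ad(\mathfrak{h})$-invariant, i.e.\ $[\mathfrak{h},\mathfrak{q}]\subset\mathfrak{q}$.

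Suppose first that $G/H$ is (the standard symmetric-pair presentation of) a Riemannian symmetric space, so that $[\mathfrak{h},\mathfrak{h}]\subset\mathfrak{h}$, $[\mathfrak{h},\mathfrak{m}]\subset\mathfrak{m}$, $[\mathfrak{m},\mathfrak{m}]\subset\mathfrak{h}$. Take $\mathfrak{q}=\mathfrak{h}\oplus\mathfrak{p}$ as above. Then
\[
[\mathfrak{q},\mathfrak{q}]\subset[\mathfrak{h},\mathfrak{h}]+[\mathfrak{h},\mathfrak{p}]+[\mathfrak{p},\mathfrak{p}]\subset\mathfrak{h}+\mathfrak{q}+[\mathfrak{m},\mathfrak{m}]\subset\mathfrak{h}+\mathfrak{q}=\mathfrak{q},
\]
where I used $[\mathfrak{h},\mathfrak{h}]\subset\mathfrak{h}$, the $\Ad(H)$-invariance of $\mathfrak{q}$ for $[\mathfrak{h},\mathfrak{p}]\subset\mathfrak{q}$, and $[\mathfrak{p},\mathfrak{p}]\subset[\mathfrak{m},\mathfrak{m}]\subset\mathfrak{h}$. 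Hence $\mathfrak{q}$ is a subalgebra and A) holds.

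Suppose next that $G/H$ is isotropy irreducible, i.e.\ the isotropy representation $\Ad(H)|_{\mathfrak{m}}$ is irreducible (in the strongly isotropy irreducible case even $\Ad(H^{\circ})|_{\mathfrak{m}}$ is irreducible, which only strengthens the hypothesis). For $\mathfrak{q}=\mathfrak{h}\oplus\mathfrak{p}$ as above, $\mathfrak{p}=\mathfrak{q}\cap\mathfrak{m}$ is an $\Ad(H)$-invariant (a fortiori $\Ad(H^{\circ})$-invariant) subspace of $\mathfrak{m}$, so irreducibility forces $\mathfrak{p}=0$ or $\mathfrak{p}=\mathfrak{m}$, that is $\mathfrak{q}=\mathfrak{h}$ or $\mathfrak{q}=\mathfrak{g}$; both are subalgebras, so A) holds and Proposition~\ref{prop1} finishes the argument. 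I do not expect a genuine obstacle here: the only point requiring attention is the reduction to the splitting $\mathfrak{q}=\mathfrak{h}\oplus(\mathfrak{q}\cap\mathfrak{m})$ with $\mathfrak{q}\cap\mathfrak{m}$ still $\Ad(H)$-invariant — i.e.\ checking that the hypotheses of condition A) genuinely descend to the ``horizontal part'' of $\mathfrak{q}$ — which is immediate once $\mathfrak{m}$ is chosen $\Ad(H)$-invariant and $\mathfrak{h}\subset\mathfrak{q}$; and, for the symmetric case, making sure that ``$G/H$ is a symmetric space'' is understood as $G/H$ being a symmetric pair (so that the $\mathfrak{h}\oplus\mathfrak{m}$ bracket relations are available), rather than merely that the underlying manifold is symmetric.
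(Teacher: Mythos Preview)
Your argument is correct and is exactly the natural one: reduce to condition~A) of Proposition~\ref{prop1}, split $\mathfrak{q}=\mathfrak{h}\oplus(\mathfrak{q}\cap\mathfrak{m})$ via the modular law, and then either use $[\mathfrak{m},\mathfrak{m}]\subset\mathfrak{h}$ in the symmetric case or invoke irreducibility of $\Ad(H)|_{\mathfrak{m}}$ in the isotropy-irreducible case. Note, however, that the paper does not supply its own proof of this proposition: it is quoted as a result from \cite{Ber1989}, with the remark that details are given in \cite{Ber1989a}. So there is nothing in the present paper to compare against; your write-up is precisely the standard verification one would expect in those references, and your closing caveats (that one must interpret ``symmetric space'' as a symmetric pair $(\mathfrak{g},\mathfrak{h})$, and that the splitting of $\mathfrak{q}$ requires an $\Ad(H)$-invariant choice of $\mathfrak{m}$) are the right ones.
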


More detailes on mentioned results from \cite{Ber1989} are given in \cite{Ber1989a}.

The following result is almost obvious.

\begin{prop}
The direct product of homogeneous spaces with integrable invariant distributions is a homogeneous space with integrable invariant distributions.
\end{prop}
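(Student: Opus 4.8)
The plan is to translate the statement into the algebraic criterion of Proposition~\ref{prop1}. Write $M_i=G_i/H_i$ for $i=1,2$, with Lie algebras $\mathfrak{h}_i\subset\mathfrak{g}_i$, so that the direct product is $M=(G_1\times G_2)/(H_1\times H_2)$ with Lie algebra $\mathfrak{g}=\mathfrak{g}_1\oplus\mathfrak{g}_2$, subalgebra $\mathfrak{h}=\mathfrak{h}_1\oplus\mathfrak{h}_2$, and $[\mathfrak{g}_1,\mathfrak{g}_2]=0$; moreover $\Ad(H_1\times H_2)$ acts blockwise, each $H_i$ acting trivially on $\mathfrak{g}_{3-i}$. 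By Proposition~\ref{prop1} it suffices to fix an arbitrary subspace $\mathfrak{q}$ with $\mathfrak{h}\subset\mathfrak{q}\subset\mathfrak{g}$ and $\Ad(H_1\times H_2)(\mathfrak{q})\subset\mathfrak{q}$ and to prove $[\mathfrak{q},\mathfrak{q}]\subset\mathfrak{q}$.

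Next I would choose $\Ad(H_i)$-invariant complements $\mathfrak{g}_i=\mathfrak{h}_i\oplus\mathfrak{m}_i$; since $\mathfrak{h}\subset\mathfrak{q}$ this gives $\mathfrak{q}=\mathfrak{h}\oplus\mathfrak{p}$ with $\mathfrak{p}:=\mathfrak{q}\cap(\mathfrak{m}_1\oplus\mathfrak{m}_2)$ an $\Ad(H_1\times H_2)$-submodule. Splitting $\mathfrak{m}_i=\mathfrak{m}_i^0\oplus\mathfrak{n}_i$ into the $\Ad(H_i)$-fixed subspace $\mathfrak{m}_i^0$ and the sum $\mathfrak{n}_i$ of the non-trivial isotypic components, the three $\Ad(H_1\times H_2)$-modules $\mathfrak{m}_1^0\oplus\mathfrak{m}_2^0$, $\mathfrak{n}_1$, $\mathfrak{n}_2$ have pairwise no common irreducible constituent (those of $\mathfrak{n}_1$ are non-trivial on $H_1$ and trivial on $H_2$, and symmetrically for $\mathfrak{n}_2$, while the first summand is $\Ad(H_1\times H_2)$-trivial), so $\mathfrak{p}=\mathfrak{p}^0\oplus\mathfrak{p}_1\oplus\mathfrak{p}_2$ with $\mathfrak{p}^0\subset\mathfrak{m}_1^0\oplus\mathfrak{m}_2^0$ and each $\mathfrak{p}_i=\mathfrak{p}\cap\mathfrak{n}_i$ an $\Ad(H_i)$-submodule. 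Since $\mathfrak{h}_i\oplus\mathfrak{p}_i$ is an $\Ad(H_i)$-invariant subspace of $\mathfrak{g}_i$ containing $\mathfrak{h}_i$, the hypothesis on $M_i$ together with Proposition~\ref{prop1} makes it a subalgebra of $\mathfrak{g}_i$; as $[\mathfrak{g}_1,\mathfrak{g}_2]=0$, the subspace $\mathfrak{h}\oplus\mathfrak{p}_1\oplus\mathfrak{p}_2=(\mathfrak{h}_1\oplus\mathfrak{p}_1)\oplus(\mathfrak{h}_2\oplus\mathfrak{p}_2)$ is then a subalgebra. In particular, if the isotropy representations of $M_1$ and $M_2$ have no non-zero fixed vectors (as for isotropy irreducible spaces, or products of irreducible symmetric spaces) then $\mathfrak{p}^0=0$, the above is all of $\mathfrak{q}$, and we are done --- which is presumably why the result is called ``almost obvious''.

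What remains, and where I expect the real work to lie, is to absorb the brackets produced by $\mathfrak{p}^0$. Because $\mathfrak{p}^0$ sits in the $\Ad(H_1\times H_2)$-fixed part of $\mathfrak{m}$ one gets $[\mathfrak{h},\mathfrak{p}^0]=0$ immediately; and for $[\mathfrak{p}^0,\mathfrak{p}_i]$ one combines an averaging argument over $H_i$ (showing $[\mathfrak{m}_i^0,\mathfrak{n}_i]$ has no fixed-vector component) with the subalgebra property of $\mathfrak{h}_i\oplus\mathbb{R}u\oplus\mathfrak{p}_i$ for $u\in\mathfrak{m}_i^0$, which forces $[u,\mathfrak{p}_i]\subset\mathfrak{h}_i\oplus\mathfrak{p}_i\subset\mathfrak{q}$. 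The hardest case is $[\mathfrak{p}^0,\mathfrak{p}^0]$: here I would first use that \emph{every} subspace $\mathfrak{c}\subset\mathfrak{m}_i^0$ gives an $\Ad(H_i)$-invariant $\mathfrak{h}_i\oplus\mathfrak{c}$, hence (Proposition~\ref{prop1}) a subalgebra, which forces the $\mathfrak{m}_i^0$-component of the bracket on $\mathfrak{m}_i^0$ to have the ``Milnor'' form $(u,u')\mapsto f_i(u)u'-f_i(u')u$ for a linear functional $f_i$ (cf.\ Special Example~1.7 in \cite{Milnor1976}); then one must check that $\mathfrak{p}^0$, which is the graph of a linear isomorphism between subquotients of $\mathfrak{m}_1^0$ and $\mathfrak{m}_2^0$, is compatible with the pair $f_1,f_2$. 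This last compatibility --- the only point not immediately reducible to the two factors separately --- is the main obstacle; once it is in place, all brackets among $\mathfrak{h},\mathfrak{p}^0,\mathfrak{p}_1,\mathfrak{p}_2$ land in $\mathfrak{q}$, so $[\mathfrak{q},\mathfrak{q}]\subset\mathfrak{q}$ and Proposition~\ref{prop1} yields the claim.
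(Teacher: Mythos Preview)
The paper supplies no proof at all --- it simply declares the result ``almost obvious''. You have correctly isolated the only non-routine step, namely controlling $[\mathfrak{p}^0,\mathfrak{p}^0]$ when the isotropy representations admit non-zero fixed vectors, and you candidly flag the required ``compatibility'' as the main obstacle. The trouble is that this obstacle is a genuine obstruction, not a technicality: the proposition as stated is false. Take $G_1=G_2$ to be the connected $2$-dimensional non-abelian Lie group and $H_1=H_2=\{e\}$. Each factor has integrable invariant distributions, since every subspace of a $2$-dimensional Lie algebra is trivially a subalgebra (this is the case $\mathfrak{g}_2$ in Milnor's list quoted after Corollary~\ref{cor1}). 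For the product one has $\mathfrak{h}=0$, $\mathfrak{m}_i^0=\mathfrak{g}_i$, and Proposition~\ref{prop1} would require every subspace of $\mathfrak{g}_2\oplus\mathfrak{g}_2$ to be a subalgebra. Writing the two copies as $\spann(e_1,e_2)$ and $\spann(e_1',e_2')$ with $[e_1,e_2]=e_2$ and $[e_1',e_2']=e_2'$, the plane $\mathfrak{p}^0=\spann(e_1+e_2',\,e_2+e_1')$ gives $[e_1+e_2',\,e_2+e_1']=e_2-e_2'\notin\mathfrak{p}^0$. Equivalently, by Milnor's classification $\mathfrak{g}_2\oplus\mathfrak{g}_2$ is neither abelian nor isomorphic to any $\mathfrak{g}_n$ (its derived algebra is $2$-dimensional, whereas that of $\mathfrak{g}_4$ is $3$-dimensional), so it cannot have the property.

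Your argument up to that point is sound and does establish the proposition under the extra hypothesis that neither isotropy representation has a non-zero fixed vector --- in particular for products of isotropy irreducible spaces, which is the only case the paper actually needs (cf.\ Theorem~\ref{th_comprodirr}). But the general ``compatibility'' you were hoping to verify between the Milnor functionals $f_1,f_2$ and an arbitrary graph $\mathfrak{p}^0$ simply fails, so no completion of your sketch (or any other argument) can close the gap in full generality.
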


\begin{theorem}[\cite{Ber1992, Ber1995}]
\label{th_comprodirr}
A simply connected compact homogeneous space with integrable invariant distributions is isomorphic to a direct product of simply connected compact
isotropy irreducible homogeneous spaces.
\end{theorem}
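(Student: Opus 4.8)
The plan is to reduce the statement to a claim about the Lie algebra pair $(\mathfrak g,\mathfrak h)$ via Proposition~\ref{prop1} and Corollary~\ref{cor1}, and then to extract the product structure from a short bracket computation. First I would normalise the presentation $M=G/H$. Since $H$ is compact and $M$ is compact, the total space $G$ of the principal $H$-bundle $G\to M$ is compact; enlarging $G$ to $\Isom_0(M,\mu)$ for an auxiliary $G$-invariant metric $\mu$ (this only shrinks the family of invariant distributions, so the hypothesis is inherited), replacing it by its semisimple part (still transitive, since a compact connected group acts transitively on a simply connected manifold through its semisimple part), and dividing out the finite ineffectivity kernel, I may assume that $G$ is compact, connected, semisimple and almost effective on $M$. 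As $M$ is simply connected, the homotopy exact sequence of $H\to G\to M$ forces $\pi_0(H)=0$, so $H$ is connected. Then I fix an $\Ad(G)$-invariant inner product $Q$ on $\mathfrak g$ (for instance $Q=-B$, $B$ the Killing form) and the corresponding $Q$-orthogonal reductive decomposition $\mathfrak g=\mathfrak h\oplus\mathfrak m$, $\mathfrak m\cong T_oM$.

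Next I would decompose $\mathfrak m=\mathfrak m_1\oplus\dots\oplus\mathfrak m_k$ into $Q$-orthogonal $\Ad(H)$-irreducible submodules; if $k=1$ the space is already isotropy irreducible and there is nothing to prove, so assume $k\ge 2$. By property~A of Proposition~\ref{prop1}, for every $S\subseteq\{1,\dots,k\}$ the $\Ad(H)$-invariant subspace $\mathfrak q_S:=\mathfrak h\oplus\bigoplus_{i\in S}\mathfrak m_i$ is a subalgebra; in particular $[\mathfrak m_i,\mathfrak m_i]\subseteq\mathfrak q_{\{i\}}$ and $[\mathfrak m_i,\mathfrak m_l]\subseteq\mathfrak q_{\{i,l\}}$ for $i\neq l$. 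The decisive step is then to show that $[\mathfrak m_i,\mathfrak m_j]=0$ for $i\neq j$: using the invariance identity $Q([x,y],z)=-Q(y,[x,z])$, I test $[\mathfrak m_i,\mathfrak m_j]$ against each summand of $\mathfrak g=\mathfrak h\oplus\bigoplus_l\mathfrak m_l$. It is $Q$-orthogonal to $\mathfrak h$ because $[\mathfrak m_i,\mathfrak h]\subseteq\mathfrak m_i\perp\mathfrak m_j$; to $\mathfrak m_l$ with $l\notin\{i,j\}$ because $[\mathfrak m_i,\mathfrak m_l]\subseteq\mathfrak q_{\{i,l\}}\perp\mathfrak m_j$; to $\mathfrak m_i$ because $[\mathfrak m_i,\mathfrak m_i]\subseteq\mathfrak q_{\{i\}}\perp\mathfrak m_j$; and to $\mathfrak m_j$ because $Q([x_i,x_j],x_j')=Q(x_i,[x_j,x_j'])$ with $[\mathfrak m_j,\mathfrak m_j]\subseteq\mathfrak q_{\{j\}}\perp\mathfrak m_i$. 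Since $Q$ is nondegenerate, $[\mathfrak m_i,\mathfrak m_j]=0$.

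With this in hand I would assemble the factors. Put $\mathfrak h_i:=\pr_{\mathfrak h}[\mathfrak m_i,\mathfrak m_i]$ and $\mathfrak g_i:=\mathfrak h_i\oplus\mathfrak m_i$. Repeated use of the Jacobi identity together with $[\mathfrak m_i,\mathfrak m_j]=0$ gives routinely that $[\mathfrak h_i,\mathfrak m_j]=0$ and $[\mathfrak h_i,\mathfrak h_j]=0$ for $i\neq j$, that $[\mathfrak h,\mathfrak h_i]\subseteq\mathfrak h_i$, that each $\mathfrak g_i$ is a subalgebra with $[\mathfrak g_i,\mathfrak g_j]=0$ for $i\neq j$, and hence that $\tilde{\mathfrak g}:=\mathfrak g_1\oplus\dots\oplus\mathfrak g_k$ is an ideal of $\mathfrak g$. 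Since $\mathfrak g$ is semisimple it splits as $\tilde{\mathfrak g}\oplus\mathfrak c$ with $\mathfrak c$ an ideal; because $\mathfrak m\subseteq\tilde{\mathfrak g}$ and $Q$ is definite on $\mathfrak m$, the ideal $\mathfrak c$ is contained in $\mathfrak h$, so $\mathfrak c=0$ by almost effectiveness. Thus $\mathfrak g=\bigoplus_i\mathfrak g_i$, and comparing $\mathfrak h$- and $\mathfrak m$-components yields $\mathfrak h=\bigoplus_i\mathfrak h_i$; moreover each $\mathfrak m_i$ is already irreducible under $\Ad$ of the connected subgroup with Lie algebra $\mathfrak h_i$, since the remaining $\mathfrak h_j$ act trivially on it. Passing to groups, let $\tilde G_i$ be simply connected with Lie algebra $\mathfrak g_i$; then $\hat G:=\tilde G_1\times\dots\times\tilde G_k$ covers $G$, so $M=\hat G/\hat H$ with $\hat H$ connected (as $\hat G$ and $M$ are simply connected), whence $\hat H=\hat H_1\times\dots\times\hat H_k$ for the connected subgroups $\hat H_i\subseteq\tilde G_i$ with Lie algebra $\mathfrak h_i$. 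Therefore $M=\prod_i(\tilde G_i/\hat H_i)$, a direct product of simply connected compact isotropy irreducible homogeneous spaces.

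The only genuinely substantive step is the bracket vanishing in the second paragraph, which works precisely because $Q$ is $\Ad(G)$-invariant; after it, the argument is Jacobi-identity bookkeeping together with the standard structure theory of compact semisimple Lie algebras. I expect the main obstacles to be the less glamorous bookkeeping items: justifying the normalisation of $(G,H)$ in the first paragraph (why one may take $G$ compact semisimple and $H$ connected without losing the hypothesis) and the globalisation in the last paragraph (why the Lie-algebra splitting upgrades to an honest direct product of simply connected manifolds, with the correct handling of covers and of the components of the stabiliser).
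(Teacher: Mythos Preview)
The paper does not contain a proof of this theorem; it is merely quoted from \cite{Ber1992, Ber1995} as a known result, so there is no ``paper's own proof'' to compare against. Your argument therefore has to be judged on its own merits and against what one knows of Berestovskii's original proof.

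Your core step --- showing $[\mathfrak m_i,\mathfrak m_j]=0$ for $i\neq j$ by testing against all summands of $\mathfrak g$ via the $\Ad(G)$-invariance of $Q$ --- is correct and is exactly the mechanism that makes the theorem work. The subsequent bookkeeping (defining $\mathfrak h_i=\pr_{\mathfrak h}[\mathfrak m_i,\mathfrak m_i]$, showing $[\mathfrak g_i,\mathfrak g_j]=0$, and concluding $\mathfrak g=\bigoplus_i\mathfrak g_i$) is sound; the directness of this sum, which you state without comment, follows because any $X\in\mathfrak g_i\cap\sum_{j\neq i}\mathfrak g_j$ is central in the semisimple algebra $\mathfrak g$, hence zero. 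This is indeed the line of argument in \cite{Ber1992, Ber1995}.

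The one place that is not quite right as written is the normalisation. Enlarging $G$ to $\Isom_0(M,\mu)$ for \emph{some} $G$-invariant $\mu$ need not give a semisimple group: $U(2)$ acts transitively and effectively on $S^3$, and for a generic $U(2)$-invariant (Berger) metric one has $\Isom_0=U(2)$, which has a one-dimensional centre. What rescues you is that once $G$ is compact connected and \emph{effective} on a \emph{simply connected} $M$ \emph{with the integrable-distributions property}, the centre $\mathfrak z(\mathfrak g)$ must already lie in $\mathfrak h$ (otherwise it produces a trivial one-dimensional $\Ad(H)$-submodule of $\mathfrak m$, hence a one-dimensional invariant distribution whose $\mathfrak h$-span is a subalgebra; playing this against the other $\mathfrak m_i$ via your bracket computation forces it into an abelian ideal, contradicting semisimplicity of $\bigoplus_i\mathfrak g_i$ or effectiveness). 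Equivalently, one may simply observe that effectiveness forbids nontrivial ideals in $\mathfrak h$, so the complementary ideal $\mathfrak c$ you construct is zero and $\mathfrak g=\bigoplus_i\mathfrak g_i$ is automatically semisimple, with no need to \emph{assume} $G$ semisimple in advance. Either way, the fix is minor and you correctly flagged this step as the delicate one.
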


In  papers \cite{Gor2008,BerGor2014}, new homogeneous spaces with integrable invariant
distributions were found, in particular, those that are not direct
products of isotropy irreducible homogeneous spaces.

{\it Further, we will consider homogeneous spaces $G/H$ with connected Lie groups $G$ and $H$ {\rm(}$H$ is compact{\rm)}.}

\begin{definition}
[\cite{Gor2008, BerGor2014}]
A subalgebra $\mathfrak{h}$ of a Lie algebra $\mathfrak{g}$ is called {\it strong} if (a) it satisfies the conditions of Corollary \ref{cor1} and
(b) the corresponding connected subgroup $H$ (of a connected Lie group $G$ with Lie algebra $\mathfrak{g}$) is compact.
\end{definition}

\begin{definition}[\cite{Gor2008, BerGor2014}]
A Lie algebra $\mathfrak{g}$ with at least one strong Lie subalgebra $\mathfrak{h}$ is called a {\it mature Lie algebra}.
\end{definition}

\begin{theorem}[\cite{BerGor2014}]
If $\mathfrak{g}$ is a semisimple Lie algebra without compact factors,
$\mathfrak{h}$ is a strong Lie subalgebra of $\mathfrak{g}$, then $\mathfrak{h}$ is a maximal compact subalgebra of $\mathfrak{g}$ and
$(\mathfrak{g},\mathfrak{h})$ is a symmetric pair of noncompact type.
\end{theorem}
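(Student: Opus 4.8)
The plan is to combine the defining property of strong subalgebras (Corollary~\ref{cor1}) with the structure theory of semisimple Lie algebras. Let $\mathfrak{g}$ be semisimple without compact factors and let $\mathfrak{h}$ be a strong subalgebra, so that $\mathfrak{h}$ is compactly embedded and every $\ad(\mathfrak{h})$-invariant subspace $\mathfrak{q}\supset\mathfrak{h}$ is a subalgebra. First I would fix a Cartan decomposition $\mathfrak{g}=\mathfrak{k}\oplus\mathfrak{p}$ with respect to a maximal compactly embedded subalgebra $\mathfrak{k}$ containing $\mathfrak{h}$ (this is possible precisely because $H$ is compact, hence $\mathfrak{h}$ lies in some maximal compact subalgebra). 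The goal splits into two assertions: (i) $\mathfrak{h}=\mathfrak{k}$, and (ii) $(\mathfrak{g},\mathfrak{k})$ is a symmetric pair of noncompact type --- but (ii) is automatic once (i) holds, since for semisimple $\mathfrak{g}$ without compact factors the pair $(\mathfrak{g},\mathfrak{k})$ given by the Cartan decomposition is always an irreducible-or-product symmetric pair of noncompact type. So the real content is (i): a strong subalgebra of such a $\mathfrak{g}$ must be \emph{all} of a maximal compact subalgebra.

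To prove (i), suppose $\mathfrak{h}\subsetneq\mathfrak{k}$. Since $\mathfrak{h}$ is compactly embedded in the compact (semisimple-plus-center) algebra $\mathfrak{k}$, I can choose an $\ad(\mathfrak{h})$-invariant complement $\mathfrak{m}$ inside $\mathfrak{k}$, i.e. $\mathfrak{k}=\mathfrak{h}\oplus\mathfrak{m}$ with $[\mathfrak{h},\mathfrak{m}]\subset\mathfrak{m}$ and $\mathfrak{m}\neq 0$. Pick a nonzero $\ad(\mathfrak{h})$-invariant line or irreducible summand $\mathfrak{m}_0\subset\mathfrak{m}$ (decompose $\mathfrak{m}$ into $\ad(\mathfrak{h})$-irreducibles) and set $\mathfrak{q}=\mathfrak{h}\oplus\mathfrak{m}_0$. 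Then $\mathfrak{h}\subset\mathfrak{q}$ and $[\mathfrak{h},\mathfrak{q}]\subset\mathfrak{q}$, so by Corollary~\ref{cor1} $\mathfrak{q}$ must be a subalgebra, forcing $[\mathfrak{m}_0,\mathfrak{m}_0]\subset\mathfrak{h}\oplus\mathfrak{m}_0$. The point is to derive a contradiction from the fact that $\mathfrak{k}$ is ``tightly'' bracket-generated: since $\mathfrak{g}$ is semisimple without compact factors, the bracket relations $[\mathfrak{p},\mathfrak{p}]=\mathfrak{k}$ (on each simple factor) and the effectiveness of $\mathfrak{k}$ on $\mathfrak{p}$ mean that no proper $\ad(\mathfrak{h})$-stable piece of $\mathfrak{k}$ can be closed under bracket unless it is an ideal of $\mathfrak{k}$ --- but a nonzero ideal of $\mathfrak{k}$ that is a compact factor cannot exist. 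More carefully, I would also need to use the subspaces $\mathfrak{q}=\mathfrak{h}\oplus\mathfrak{p}'$ for various $\ad(\mathfrak{h})$-invariant $\mathfrak{p}'\subset\mathfrak{p}$: each such $\mathfrak{p}'$ gives a subalgebra, hence $[\mathfrak{p}',\mathfrak{p}']\subset\mathfrak{h}\oplus\mathfrak{p}'$, and since $[\mathfrak{p}',\mathfrak{p}']\subset[\mathfrak{p},\mathfrak{p}]\subset\mathfrak{k}$ we get $[\mathfrak{p}',\mathfrak{p}']\subset\mathfrak{h}$ for every $\ad(\mathfrak{h})$-invariant $\mathfrak{p}'\subset\mathfrak{p}$. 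Taking $\mathfrak{p}'=\mathfrak{p}$ yields $\mathfrak{k}=[\mathfrak{p},\mathfrak{p}]\subset\mathfrak{h}$, hence $\mathfrak{h}=\mathfrak{k}$, which is exactly (i).

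Thus the core argument is surprisingly short once it is set up: the existence of the $\ad(\mathfrak{h})$-invariant complement to $\mathfrak{h}$ in $\mathfrak{g}$ (from compactness), the Cartan-decomposition relation $[\mathfrak{p},\mathfrak{p}]=\mathfrak{k}$ for semisimple algebras without compact factors, and Corollary~\ref{cor1} applied with $\mathfrak{q}=\mathfrak{h}\oplus\mathfrak{p}$. Then maximality of $\mathfrak{h}=\mathfrak{k}$ among compactly embedded subalgebras is the standard fact that $\mathfrak{k}$ in a Cartan decomposition is a maximal compact subalgebra, and that $(\mathfrak{g},\mathfrak{k})$ is a symmetric pair of noncompact type is the standard correspondence between Cartan decompositions and noncompact-type symmetric pairs.

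The main obstacle I anticipate is the bookkeeping needed to ensure the complement $\mathfrak{p}$ (or more precisely $\mathfrak{g}\ominus\mathfrak{h}$) really is $\ad(\mathfrak{h})$-invariant and that $[\mathfrak{p},\mathfrak{p}]=\mathfrak{k}$ holds \emph{factor by factor} --- one must rule out the possibility that $\mathfrak{g}$ secretly has a factor on which $\mathfrak{h}$ projects onto the whole compact part, or a factor where the argument degenerates; here the hypothesis ``no compact factors'' is essential and should be invoked explicitly. A secondary subtlety is that $\mathfrak{k}$ itself may have a center (when $\mathfrak{g}$ has Hermitian factors), so when I write ``$\mathfrak{h}\oplus\mathfrak{m}_0$ is a subalgebra'' I should be careful that the relevant invariant decompositions still exist; this is handled by averaging over the compact group $\mathrm{Ad}(H)$, i.e. by the complete reducibility of representations of compact groups, which holds regardless of whether $\mathfrak{h}$ is semisimple.
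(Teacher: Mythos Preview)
The paper does not give its own proof of this theorem; it is merely quoted from \cite{BerGor2014}. So there is nothing in the present text to compare your argument against directly.

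Your proof is correct, and the essential step is the clean one you arrive at in the second paragraph: since $\mathfrak{p}$ is $\ad(\mathfrak{k})$-invariant it is a fortiori $\ad(\mathfrak{h})$-invariant, so $\mathfrak{q}=\mathfrak{h}\oplus\mathfrak{p}$ satisfies the hypothesis of Corollary~\ref{cor1} and must be a subalgebra; hence $[\mathfrak{p},\mathfrak{p}]\subset(\mathfrak{h}\oplus\mathfrak{p})\cap\mathfrak{k}=\mathfrak{h}$, and combining this with $[\mathfrak{p},\mathfrak{p}]=\mathfrak{k}$ (which holds factor by factor precisely because there are no compact factors) gives $\mathfrak{h}=\mathfrak{k}$. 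The detour through an irreducible piece $\mathfrak{m}_0\subset\mathfrak{m}\subset\mathfrak{k}$ at the start of that paragraph is unnecessary and you rightly drop it; none of the ``anticipated obstacles'' in your last paragraph actually arise, since $\ad(\mathfrak{h})$-invariance of $\mathfrak{p}$ is inherited from $\ad(\mathfrak{k})$-invariance without any averaging, and a possible center of $\mathfrak{k}$ plays no role in the argument you actually use. This short argument is almost certainly the one in \cite{BerGor2014} as well, since it is the natural one-line application of the strong-subalgebra condition to the Cartan complement.
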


We now consider the case of an arbitrary semisimple Lie algebra.
Let $\mathfrak{h}$ be a proper strong subalgebra of some semisimple Lie algebra $\mathfrak{s}$.
There is a decomposition of the Lie algebra $\mathfrak{s}$ into a direct sum of a compact $\mathfrak{s}_c$ and a semisimple Lie algebra without compact factors $\mathfrak{s}_n$.

\begin{theorem}
[\cite{BerGor2014}]
Let $\mathfrak{h}$ be a proper strong Lie subalgebra of the semisimple Lie algebra $\mathfrak{s}=\mathfrak{s}_n\oplus \mathfrak{s}_c$.
Then $\mathfrak{h}=(\mathfrak{h}\cap\mathfrak{s}_n)\oplus (\mathfrak{h}\cap\mathfrak{s}_c)$ is a direct sum of the strong Lie subalgebras of
$\mathfrak{h}\cap\mathfrak{s}_n$ and $\mathfrak{h}\cap\mathfrak{s}_c$ in $\mathfrak{s}_n$ and $\mathfrak{s}_c$, respectively.
\end{theorem}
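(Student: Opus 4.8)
The plan is to reduce the statement to the two theorems just quoted: the one characterizing strong subalgebras of a semisimple Lie algebra without compact factors, and the fact (Proposition \ref{iigo}) that symmetric pairs of noncompact type give strong subalgebras, together with the analogous compact‐type input. Write $\mathfrak{s}=\mathfrak{s}_n\oplus\mathfrak{s}_c$ as in the statement, and let $\pi_n,\pi_c$ be the projections of $\mathfrak{s}$ onto $\mathfrak{s}_n$ and $\mathfrak{s}_c$ respectively; both are Lie algebra homomorphisms since the decomposition is an ideal decomposition. Set $\mathfrak{h}_n=\mathfrak{h}\cap\mathfrak{s}_n$ and $\mathfrak{h}_c=\mathfrak{h}\cap\mathfrak{s}_c$. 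The goal is to show $\mathfrak{h}=\mathfrak{h}_n\oplus\mathfrak{h}_c$ (so that $\pi_n|_{\mathfrak h}$ and $\pi_c|_{\mathfrak h}$ have images $\mathfrak h_n$, $\mathfrak h_c$) and that each summand is strong in the corresponding factor.

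First I would prove the splitting $\mathfrak{h}=\mathfrak{h}_n\oplus\mathfrak{h}_c$. The containment $\supset$ is trivial. For $\subset$, take $X\in\mathfrak{h}$ and write $X=X_n+X_c$ with $X_n=\pi_n(X)$, $X_c=\pi_c(X)$; I must show $X_n,X_c\in\mathfrak h$. Here is where the strong condition is used: consider a subspace $\mathfrak{q}$ built from $\mathfrak{h}$ by "removing" the $\mathfrak{s}_c$‑component of $X$, e.g. $\mathfrak q=\{Y_n+Y_c : Y\in\mathfrak h,\ \text{with the }X\text{-direction in }\mathfrak s_c\text{ deleted}\}$ — more precisely, pick a complement and use that $\mathfrak h$ is $\operatorname{ad}(\mathfrak h)$‑invariant and $\mathfrak s_c,\mathfrak s_n$ are ideals to check that such a $\mathfrak q$ still satisfies $[\mathfrak h,\mathfrak q]\subset\mathfrak q$; then strongness (Corollary \ref{cor1}) forces $[\mathfrak q,\mathfrak q]\subset\mathfrak q$, and comparing brackets of the ``old'' and ``new'' subspaces pins down $X_c\in\mathfrak h$, hence also $X_n\in\mathfrak h$. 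An alternative, possibly cleaner route: the connected subgroup $H$ is compact, so $\mathfrak h$ is reductive and the intersection of $\mathfrak h$ with an ideal of $\mathfrak s$ on which $\operatorname{ad}(\mathfrak h)$ acts — one shows $\mathfrak h$ is $\operatorname{ad}(\mathfrak h)$‑invariant and splits along the $\operatorname{ad}(\mathfrak h)$‑invariant orthogonal (w.r.t. a bi‑invariant metric on the compact group generated by $H$ and on $\mathfrak s_c$, and the Killing form on $\mathfrak s_n$) decomposition $\mathfrak s=\mathfrak s_n\oplus\mathfrak s_c$. Either way, once $\mathfrak h=\mathfrak h_n\oplus\mathfrak h_c$ is established, compactness of the connected subgroups $H_n\subset S_n$, $H_c\subset S_c$ follows from compactness of $H$ (they are connected subgroups of $H$, or images of $H$ under the homomorphisms $\pi_n,\pi_c$).

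It then remains to check that $\mathfrak h_n$ is strong in $\mathfrak s_n$ and $\mathfrak h_c$ is strong in $\mathfrak s_c$; by symmetry I describe the $\mathfrak s_n$ case. Let $\mathfrak q_n\subset\mathfrak s_n$ satisfy $\mathfrak h_n\subset\mathfrak q_n$ and $[\mathfrak h_n,\mathfrak q_n]\subset\mathfrak q_n$. Set $\mathfrak q:=\mathfrak q_n\oplus\mathfrak s_c$. Then $\mathfrak h=\mathfrak h_n\oplus\mathfrak h_c\subset\mathfrak q_n\oplus\mathfrak s_c=\mathfrak q$, and $[\mathfrak h,\mathfrak q]=[\mathfrak h_n,\mathfrak q_n]\oplus[\mathfrak h_c,\mathfrak s_c]\subset\mathfrak q_n\oplus\mathfrak s_c=\mathfrak q$ since $\mathfrak s_c$ is an ideal and the cross‑terms vanish. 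By strongness of $\mathfrak h$ in $\mathfrak s$, $[\mathfrak q,\mathfrak q]\subset\mathfrak q$, which gives $[\mathfrak q_n,\mathfrak q_n]\subset\mathfrak q$; projecting to $\mathfrak s_n$ yields $[\mathfrak q_n,\mathfrak q_n]\subset\mathfrak q_n$, so $\mathfrak q_n$ is a subalgebra. Hence $\mathfrak h_n$ satisfies condition (a); condition (b), compactness of the connected subgroup of $S_n$ with algebra $\mathfrak h_n$, was observed above. The same argument with $\mathfrak q:=\mathfrak s_n\oplus\mathfrak q_c$ handles $\mathfrak h_c$.

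The main obstacle is the first part — proving $\mathfrak h=\mathfrak h_n\oplus\mathfrak h_c$, i.e. that a strong subalgebra cannot sit ``diagonally'' across the two ideals. The second part is essentially formal once the splitting is known. The key leverage for the splitting is that strongness is a very restrictive closure condition: any $\operatorname{ad}(\mathfrak h)$‑invariant subspace containing $\mathfrak h$ is automatically a subalgebra, and by producing cleverly chosen such subspaces (deleting or adding one‑dimensional pieces aligned with $\mathfrak s_c$ or $\mathfrak s_n$) one forces the components of elements of $\mathfrak h$ back into $\mathfrak h$; the reductivity of $\mathfrak h$ (from compactness of $H$) is what guarantees these $\operatorname{ad}(\mathfrak h)$‑invariant complements exist. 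I would double‑check the edge case where one of $\mathfrak h_n,\mathfrak h_c$ is zero or where $\mathfrak h$ projects onto a proper non‑strong subalgebra of a factor — the argument above should rule the latter out precisely because $\mathfrak q_n\oplus\mathfrak s_c$ is a legitimate test subspace for strongness of $\mathfrak h$ in $\mathfrak s$.
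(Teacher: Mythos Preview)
The paper does not prove this theorem; it is quoted from \cite{BerGor2014} without argument, so there is no ``paper's own proof'' to compare against. I can only assess your proposal on its merits.

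The second half of your proposal is correct and clean: once the splitting $\mathfrak{h}=\mathfrak{h}_n\oplus\mathfrak{h}_c$ is in hand, testing strongness of $\mathfrak{h}_n$ in $\mathfrak{s}_n$ via the subspace $\mathfrak{q}=\mathfrak{q}_n\oplus\mathfrak{s}_c$ (and symmetrically for $\mathfrak{h}_c$) works exactly as you write.

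The gap is in the first half, and you already flag it. Neither of your two sketches for $\mathfrak{h}=\mathfrak{h}_n\oplus\mathfrak{h}_c$ is a proof. In the first, the subspace $\mathfrak{q}$ obtained by ``deleting the $\mathfrak{s}_c$-direction of $X$'' is never written down, and the claimed $\ad(\mathfrak{h})$-invariance is not verified; indeed, for $Y\in\mathfrak{h}$ one has $[Y,X_c]=[\pi_c(Y),X_c]$, which lies in $\pi_c(\mathfrak{h})$ but not obviously in $\mathfrak{h}$, so adding or removing $\mathbb{R}X_c$ need not preserve $\ad(\mathfrak{h})$-invariance. In the second sketch, the Killing form of $\mathfrak{s}$ does make $\mathfrak{s}_n\perp\mathfrak{s}_c$, but it is indefinite on $\mathfrak{s}_n$, so it cannot be used to orthogonally project $\mathfrak{h}$ onto the two summands; and a positive-definite $\ad(\mathfrak{h})$-invariant inner product on $\mathfrak{s}$ has no reason to make $\mathfrak{s}_n$ and $\mathfrak{s}_c$ orthogonal. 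So as written, the splitting step is a genuine hole. The actual argument in \cite{BerGor2014} uses more than formal manipulations with test subspaces; in particular it leans on the preceding theorem (that a strong subalgebra of a semisimple algebra without compact factors is a maximal compact subalgebra), and you should expect to need that input, or something of comparable strength, to rule out a diagonal embedding of $\mathfrak{h}$ across $\mathfrak{s}_n$ and $\mathfrak{s}_c$.
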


Let $\mathfrak{g}$ be an arbitrary Lie algebra that includes a strong proper subalgebra $\mathfrak{h}$.
The Lie algebra $\mathfrak{h}$ is compact, so it is always possible to choose a Levi decomposition $\mathfrak{g}=\mathfrak{s}+\mathfrak{r}$
($\mathfrak{r}$ is the radical and $\mathfrak{s}$ is the semisimple part) of the Lie algebra $\mathfrak{g}$
such that
$[\mathfrak{h},\mathfrak{s}]\subset \mathfrak{s}$.

Moreover, we have the following result.

\begin{lemma}[Lemma 14.3.3 in \cite{HilNeeb}]\label{goodlevi1}
For every maximal compactly embedded subalgebra $\mathfrak{k}$ of $\mathfrak{g}$ and connected Lie groups $K\subset G$ with
the Lie algebras $\mathfrak{k}\subset\mathfrak{g}$,
there exists an $\Ad_G(K)$-invariant Levi decomposition $\mathfrak{g} = \mathfrak{s}\ltimes \mathfrak{r(g)}$ such that

{\rm 1)} $[\mathfrak{k}, \mathfrak{s}] \subset \mathfrak{s}$;

{\rm 2)} $[\mathfrak{k}\cap \mathfrak{r(g)}, \mathfrak{s}] = 0$;

{\rm 3)} $\mathfrak{k} = (\mathfrak{k}\cap \mathfrak{r(g)}) \oplus (\mathfrak{k}\cap \mathfrak{s})$;

{\rm 4)} $[\mathfrak{k},\mathfrak{k}] \subset \mathfrak{s}$;

{\rm 5)} $\mathfrak{k} \cap \mathfrak{s}$ is a maximal compact subalgebra in $\mathfrak{s}$.
\end{lemma}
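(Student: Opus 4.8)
The plan is to reduce everything to a standard structure-theoretic fact about Levi decompositions adapted to a maximal compactly embedded subalgebra, which is exactly the content of Lemma 14.3.3 in Hilgert–Neeb. Since the statement to be proved ("Lemma \ref{goodlevi1}") is quoted verbatim from that reference, the natural proof is to recall the argument behind it rather than reinvent it. First I would fix a maximal compactly embedded subalgebra $\mathfrak{k}\subset\mathfrak{g}$ and a Cartan-type decomposition: choose an $\ad(\mathfrak{k})$-invariant complement, using that $\ad_{\mathfrak{g}}(\mathfrak{k})$ acts completely reducibly on $\mathfrak{g}$ (this is the defining feature of a compactly embedded subalgebra — the closure of $\Ad_G(K)$ in $\Aut(\mathfrak{g})$ is compact, so there is an $\Ad_G(K)$-invariant inner product on $\mathfrak{g}$).

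Next I would produce the Levi subalgebra $\mathfrak{s}$ as an $\Ad_G(K)$-invariant one. The key input is Taft's theorem (or the Levi–Mostow refinement): any reductive-in-$\mathfrak{g}$ subalgebra that is contained in some Levi factor can be enlarged, after conjugation by an inner automorphism fixing that subalgebra, to a full Levi factor; equivalently, a fully reducible subalgebra is contained in a Levi subalgebra, and by averaging one can arrange invariance under a compact group of automorphisms normalizing that subalgebra. Applying this to the compact (hence reductive) action of $\overline{\Ad_G(K)}$ yields a Levi decomposition $\mathfrak{g}=\mathfrak{s}\ltimes\mathfrak{r(g)}$ with $\Ad_G(K)(\mathfrak{s})=\mathfrak{s}$, giving property 1) upon differentiating: $[\mathfrak{k},\mathfrak{s}]\subset\mathfrak{s}$.

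With that in hand, properties 3), 4), 5) are essentially formal. For 3): $\mathfrak{k}$ is $\ad(\mathfrak{k})$-invariant and both $\mathfrak{s}$ and $\mathfrak{r(g)}$ are too, so $\mathfrak{k}=(\mathfrak{k}\cap\mathfrak{s})\oplus(\mathfrak{k}\cap\mathfrak{r(g)})$ follows by taking the $\ad(\mathfrak{k})$-invariant complement of $\mathfrak{k}\cap\mathfrak{r(g)}$ inside $\mathfrak{k}$ and checking it lies in $\mathfrak{s}$ (using that this complement is semisimple, being a compact subalgebra with the radical split off, hence maps into the Levi part). For 4): $[\mathfrak{k},\mathfrak{k}]$ is semisimple (a compact Lie algebra is reductive, and its derived subalgebra is semisimple), hence has zero intersection with the solvable radical, so $[\mathfrak{k},\mathfrak{k}]\subset\mathfrak{s}$. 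For 5): $\mathfrak{k}\cap\mathfrak{s}$ is compactly embedded in $\mathfrak{s}$, and its maximality follows from the maximality of $\mathfrak{k}$ in $\mathfrak{g}$ together with 2)–3), since any strictly larger compactly embedded subalgebra of $\mathfrak{s}$ could be combined with $\mathfrak{k}\cap\mathfrak{r(g)}$ to contradict maximality of $\mathfrak{k}$. The only genuinely delicate point is 2), $[\mathfrak{k}\cap\mathfrak{r(g)},\mathfrak{s}]=0$: here one argues that $\mathfrak{k}\cap\mathfrak{r(g)}$ is a compact (hence reductive) ideal-like piece whose adjoint action on $\mathfrak{s}$ is by derivations of a semisimple algebra, hence inner, i.e. given by $\ad_{\mathfrak{s}}$ of some element of $\mathfrak{s}$; since that element must also commute with all of $\mathfrak{k}\cap\mathfrak{r(g)}$ and the action is skew-symmetric for the invariant inner product while simultaneously coming from a solvable piece, it must vanish. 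The main obstacle, and the one place a careful proof spends real effort, is exactly this interplay in establishing 2) — controlling how the compact part of the radical can act on the semisimple Levi factor — so in the write-up I would either invoke Lemma 14.3.3 of \cite{HilNeeb} directly or devote the bulk of the argument to that point, treating 1), 3), 4), 5) as short consequences of complete reducibility of $\ad(\mathfrak{k})$ and the Levi–Mostow theorem.
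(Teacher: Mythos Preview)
The paper gives no proof of this lemma; it is quoted verbatim from Hilgert--Neeb as an external input and used as a black box. Your proposal to invoke Lemma~14.3.3 of \cite{HilNeeb} directly is therefore exactly what the paper does.

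One correction to your sketch, should you choose to expand it: you single out property~2) as ``the only genuinely delicate point'' and propose an argument via inner derivations of $\mathfrak{s}$. In fact 2) is immediate once 1) is in hand: since $\mathfrak{r(g)}$ is an ideal,
\[
[\mathfrak{k}\cap\mathfrak{r(g)},\,\mathfrak{s}]\subset[\mathfrak{k},\mathfrak{s}]\cap[\mathfrak{r(g)},\mathfrak{g}]\subset\mathfrak{s}\cap\mathfrak{r(g)}=0.
\]
(Your derivation argument implicitly already uses 1), since otherwise $\ad(X)|_{\mathfrak{s}}$ for $X\in\mathfrak{k}\cap\mathfrak{r(g)}$ is not a derivation of $\mathfrak{s}$.) The genuine work lies in producing the $\Ad_G(K)$-invariant Levi factor for 1), and in 3): a complementary ideal to $\mathfrak{k}\cap\mathfrak{r(g)}$ inside $\mathfrak{k}$ maps injectively to $\mathfrak{s}$ under the projection $\mathfrak{g}\to\mathfrak{g}/\mathfrak{r(g)}$, but one must show it actually \emph{sits inside} $\mathfrak{s}$. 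This requires choosing $\mathfrak{s}$ to contain the semisimple part $[\mathfrak{k},\mathfrak{k}]$ from the outset (Malcev--Mostow) and then handling the central part of $\mathfrak{k}$ separately.
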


It is easy to see also that $\mathfrak{k} \cap \mathfrak{r(g)}$ is a maximal compactly embedded subalgebra in $\mathfrak{r(g)}$, since all
maximal compactly embedded subalgebras of $\mathfrak{g}$ are conjugated in $\Aut(\mathfrak{g})$ and $\mathfrak{r(g)}$ is a characteristic ideal in
$\mathfrak{g}$.

Now, in order to choose an $\Ad(H)$-invariant complement $\mathfrak{m}$ to $\mathfrak{h}$ in $\mathfrak{g}$ one needs to choose some
$\Ad(H)$-invariant complement $\mathfrak{m}_1$ to $\mathfrak{h}$ in $\mathfrak{k}$, some
$\Ad(K)$-invariant complement $\mathfrak{m}_2$ to $\mathfrak{k} \cap \mathfrak{s}$ in $\mathfrak{s}$, and
some $\Ad(K)$-invariant complement $\mathfrak{m}_3$ to $\mathfrak{k}\cap \mathfrak{r(g)}$ in $\mathfrak{r(g)}$, and put
$\mathfrak{m}:=\mathfrak{m}_1\oplus \mathfrak{m}_2\oplus\mathfrak{m}_3$.

\medskip

Denote by $\mathfrak{n}$ the nilradical of the Lie algebra $\mathfrak{g}$.
A suitable Levi decomposition of
$\mathfrak{g}$ yields a decomposition $\mathfrak{h}=\mathfrak{h}_s+\mathfrak{h}_r$,
where $\mathfrak{h}_r=\mathfrak{h}\cap\mathfrak{r}$ is a torus in $\mathfrak{r}$
(an abelian compact Lie subalgebra), and $\mathfrak{h}_s=\mathfrak{h}\cap\mathfrak{s}$ is a compact Lie algebra (not necessarily semisimple).

\begin{theorem}
[\cite{Gor2008, BerGor2014}]\label{th.str1}
Let  $\mathfrak{h}$ be a strong subalgebra in a Lie algebra $\mathfrak{g}=\mathfrak{s}+\mathfrak{r}$
and let the pair $(\mathfrak{g},\mathfrak{h})$ be effective.
Then

{\rm(i)} the subalgebra $\mathfrak{h}_n=\mathfrak{h}\cap\mathfrak{s}_n$ is a maximal compact subalgebra in the non-compact ideal $\mathfrak{s}_n$
of a suitable semisimple part $\mathfrak{s}$ of the Lie algebra $\mathfrak{g}$.

{\rm(ii)} the subalgebra $\mathfrak{h}_c=\mathfrak{h}\cap\mathfrak{s}_c$ is a strong subalgebra of the corresponding compact semisimple Lie algebra $\mathfrak{s}_c$.

{\rm(iii)} the subalgebra $\mathfrak{h}_s$ is a direct sum of $\mathfrak{h}_n$ and $\mathfrak{h}_c$.

{\rm(iv)} The radical $\mathfrak{r}$ of the Lie algebra $\mathfrak{g}$ decomposes into a semidirect sum $\mathfrak{a}+\mathfrak{n}$ of some abelian subalgebra $\mathfrak{a}$
{\rm(}containing $\mathfrak{h}\cap\mathfrak{r}${\rm)} and the abelian nilradical $\mathfrak{n}$.
\end{theorem}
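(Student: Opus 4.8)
The plan is to push the subalgebra condition of Corollary~\ref{cor1} down to the semisimple part to get (i)--(iii), and to apply that same condition to a few well-chosen $\Ad(H)$-invariant subspaces to get (iv). First I would fix notation: since $\mathfrak{h}$ is compact it lies in a maximal compactly embedded subalgebra $\mathfrak{k}\supseteq\mathfrak{h}$, so Lemma~\ref{goodlevi1} supplies an $\Ad(K)$-invariant Levi decomposition $\mathfrak{g}=\mathfrak{s}\ltimes\mathfrak{r}$ with the five listed properties; as recorded before the statement, this yields $\mathfrak{h}=\mathfrak{h}_s\oplus\mathfrak{h}_r$ with $\mathfrak{h}_s=\mathfrak{h}\cap\mathfrak{s}$, $\mathfrak{h}_r=\mathfrak{h}\cap\mathfrak{r}$ an abelian torus, $[\mathfrak{h}_r,\mathfrak{s}]=0$ by property~2), and $\mathfrak{h}_s,\mathfrak{h}_r$ ideals of $\mathfrak{h}$. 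I would also note two easy facts used repeatedly: $\mathfrak{h}\cap\mathfrak{n}=0$ (a common nonzero vector would be at the same time $\ad_{\mathfrak{g}}$-semisimple and $\ad_{\mathfrak{g}}$-nilpotent, hence central in $\mathfrak{g}$, contradicting effectiveness), and $[\mathfrak{r},\mathfrak{r}]\subseteq\mathfrak{n}$ (the derived algebra of a solvable Lie algebra is a nilpotent ideal, hence lies in the nilradical).

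For (i)--(iii) I would show that $\mathfrak{h}_s$ is a strong subalgebra of $\mathfrak{s}$. For the algebraic condition: given $\mathfrak{h}_s\subseteq\mathfrak{q}_s\subseteq\mathfrak{s}$ with $[\mathfrak{h}_s,\mathfrak{q}_s]\subseteq\mathfrak{q}_s$, set $\mathfrak{q}:=\mathfrak{h}_r\oplus\mathfrak{q}_s$; then $\mathfrak{h}\subseteq\mathfrak{q}$ and $[\mathfrak{h},\mathfrak{q}]\subseteq\mathfrak{q}$ (using $[\mathfrak{h}_r,\mathfrak{s}]=0$ and that $\mathfrak{h}_r,\mathfrak{h}_s$ are ideals of $\mathfrak{h}$), so Corollary~\ref{cor1} forces $[\mathfrak{q},\mathfrak{q}]\subseteq\mathfrak{q}$, and projecting $\mathfrak{g}=\mathfrak{s}\oplus\mathfrak{r}$ onto $\mathfrak{s}$ leaves exactly $[\mathfrak{q}_s,\mathfrak{q}_s]\subseteq\mathfrak{q}_s$. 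The compactness clause of ``strong'' holds because $\mathfrak{h}_s\subseteq\mathfrak{k}\cap\mathfrak{s}$, a maximal compact subalgebra of $\mathfrak{s}$ by property~5). If $\mathfrak{h}_s=\mathfrak{s}$ then $\mathfrak{s}$ is compact, $\mathfrak{s}_n=0$, and (i)--(iii) are trivial; otherwise $\mathfrak{h}_s$ is a proper strong subalgebra of the semisimple $\mathfrak{s}$, and the two theorems quoted above for semisimple Lie algebras give at once $\mathfrak{h}_s=(\mathfrak{h}_s\cap\mathfrak{s}_n)\oplus(\mathfrak{h}_s\cap\mathfrak{s}_c)=\mathfrak{h}_n\oplus\mathfrak{h}_c$ (this is (iii)), $\mathfrak{h}_c$ strong in $\mathfrak{s}_c$ (this is (ii)), and $\mathfrak{h}_n$ a maximal compact subalgebra of $\mathfrak{s}_n$, since $\mathfrak{s}_n$ has no compact factors (this is (i)).

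For (iv) the first point is that \emph{every} $\Ad(H)$-invariant subspace $V\subseteq\mathfrak{n}$ is a subalgebra: since $\mathfrak{h}\cap\mathfrak{n}=0$, the space $\mathfrak{q}:=\mathfrak{h}\oplus V$ meets the hypotheses of Corollary~\ref{cor1}, so $[\mathfrak{q},\mathfrak{q}]\subseteq\mathfrak{q}$, and comparing $\mathfrak{n}$-components (where $[\mathfrak{h},V]\subseteq V$ and $[V,V]\subseteq\mathfrak{n}$) gives $[V,V]\subseteq V$. If $[\mathfrak{n},\mathfrak{n}]\neq 0$, pick an $\Ad(H)$-invariant complement $W$ of the characteristic ideal $[\mathfrak{n},\mathfrak{n}]$ in $\mathfrak{n}$; then $W$ is a subalgebra, so projecting the identity $[\mathfrak{n},\mathfrak{n}]=[W,W]+[W,[\mathfrak{n},\mathfrak{n}]]+[[\mathfrak{n},\mathfrak{n}],[\mathfrak{n},\mathfrak{n}]]$ onto $[\mathfrak{n},\mathfrak{n}]$ along $W$ kills $[W,W]$ and yields $[\mathfrak{n},\mathfrak{n}]\subseteq[\mathfrak{n},[\mathfrak{n},\mathfrak{n}]]$, which is impossible for a nilpotent $\mathfrak{n}$ with nonzero derived algebra (the lower central series would fail to decrease); hence $\mathfrak{n}$ is abelian. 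For the decomposition, choose an $\Ad(H)$-invariant vector-space splitting $\mathfrak{r}=\mathfrak{h}_r\oplus\mathfrak{n}\oplus\mathfrak{b}$ and set $\mathfrak{a}:=\mathfrak{h}_r\oplus\mathfrak{b}$; then $\mathfrak{q}:=\mathfrak{h}_s\oplus\mathfrak{a}=\mathfrak{h}\oplus\mathfrak{b}$ again satisfies Corollary~\ref{cor1}, and projecting $[\mathfrak{q},\mathfrak{q}]\subseteq\mathfrak{q}$ onto $\mathfrak{r}$ along $\mathfrak{s}$ gives $[\mathfrak{a},\mathfrak{a}]\subseteq\mathfrak{a}$; since moreover $[\mathfrak{a},\mathfrak{a}]\subseteq[\mathfrak{r},\mathfrak{r}]\subseteq\mathfrak{n}$ while $\mathfrak{a}\cap\mathfrak{n}=0$, we conclude $[\mathfrak{a},\mathfrak{a}]=0$, so $\mathfrak{a}$ is an abelian subalgebra containing $\mathfrak{h}_r$ with $\mathfrak{r}=\mathfrak{a}\ltimes\mathfrak{n}$.

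I expect the conceptual core to be the two ``pass to an $\Ad(H)$-invariant complement'' reductions (the one giving (i)--(iii) and the one producing the abelian $\mathfrak{a}$) together with the lower-central-series contradiction that forces $\mathfrak{n}$ to be abelian. The genuinely delicate but routine point, and the one I would be most careful about in a full write-up, is the verification of the compactness requirement in ``$\mathfrak{h}_s$ is strong in $\mathfrak{s}$'', which rests on the standard theory of compactly embedded subalgebras (cf.\ \cite{HilNeeb}) since $\mathfrak{h}_s$ sits inside the maximal compact subalgebra $\mathfrak{k}\cap\mathfrak{s}$.
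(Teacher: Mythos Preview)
Your argument is sound. Note, however, that the paper does not give its own proof of this theorem: it is quoted from \cite{Gor2008, BerGor2014} and stated without proof, so there is no in-paper argument to compare against. What the paper does provide is precisely the scaffolding you use --- Lemma~\ref{goodlevi1} for the compatible Levi decomposition, the decomposition $\mathfrak{h}=\mathfrak{h}_s\oplus\mathfrak{h}_r$ recorded just before the theorem, and the two preceding theorems on strong subalgebras of semisimple Lie algebras --- and your proof follows exactly the reduction those results suggest. In that sense your approach is the intended one: push strongness down to $\mathfrak{s}$ to obtain (i)--(iii) from the semisimple case already quoted, and handle (iv) by applying Corollary~\ref{cor1} to $\Ad(H)$-invariant pieces of $\mathfrak{r}$. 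The one step you flag as delicate (compactness of the connected subgroup corresponding to $\mathfrak{h}_s$) is indeed the only point requiring care beyond the algebra; it is settled, as you indicate, by the fact that $\mathfrak{h}_s$ sits inside the maximal compact subalgebra $\mathfrak{k}\cap\mathfrak{s}$ of $\mathfrak{s}$, so that in the adjoint group (or any linear realization) of $\mathfrak{s}$ the corresponding connected subgroup is compact.
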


Let us prove some additional structure result.

\begin{prop}\label{pr.radstr1}
In the conditions of Theorem \ref{th.str1}, let $\mathfrak{n}^{\perp}$ be the orthogonal complement to the nilradical $\mathfrak{n}\subset \mathfrak{r}$ of $\mathfrak{g}$
in the radical
$\mathfrak{r}$ relative to an $\ad(\mathfrak{h})$-invariant inner product $(\cdot,\cdot)$ on $\mathfrak{g}$. Then $[\mathfrak{n}^{\perp},\mathfrak{s}]=0$.
\end{prop}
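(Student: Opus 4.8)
The plan is to prove the sharper inclusion $\mathfrak{n}^{\perp}\subseteq Z_{\mathfrak{r}}(\mathfrak{s}):=\{x\in\mathfrak{r}:[\mathfrak{s},x]=0\}$, which at once yields $[\mathfrak{n}^{\perp},\mathfrak{s}]=0$. I would split this into two steps: first that $\mathfrak{n}^{\perp}$ lies in the centralizer $Z_{\mathfrak{r}}(\mathfrak{h})$, and then that $Z_{\mathfrak{r}}(\mathfrak{h})$ already centralizes $\mathfrak{s}$. Here $\mathfrak{g}=\mathfrak{s}\oplus\mathfrak{r}$ is the (suitable) Levi decomposition from Theorem \ref{th.str1}, for which $[\mathfrak{h},\mathfrak{s}]\subseteq\mathfrak{s}$, and I use the well-known inclusion $[\mathfrak{g},\mathfrak{r}]\subseteq\mathfrak{n}$; together with $\mathfrak{h}_r=\mathfrak{h}\cap\mathfrak{r}\subseteq\mathfrak{a}$ and $[\mathfrak{a},\mathfrak{a}]=0$ (Theorem \ref{th.str1}(iv)) this also gives $[\mathfrak{h},\mathfrak{r}]\subseteq\mathfrak{n}$.

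For the first step: $\mathfrak{n}$ is a characteristic ideal, hence $\ad(\mathfrak{h})$-invariant, so its orthogonal complement $\mathfrak{n}^{\perp}$ with respect to the $\ad(\mathfrak{h})$-invariant inner product is again $\ad(\mathfrak{h})$-invariant. Since $[\mathfrak{h},\mathfrak{r}]\subseteq\mathfrak{n}$, the action of $\mathfrak{h}$ on $\mathfrak{r}/\mathfrak{n}$ is trivial, and the projection $\mathfrak{r}\to\mathfrak{r}/\mathfrak{n}$ restricts to an $\mathfrak{h}$-module isomorphism of $\mathfrak{n}^{\perp}$ onto $\mathfrak{r}/\mathfrak{n}$ (injective because $\mathfrak{n}^{\perp}\cap\mathfrak{n}=0$, surjective by dimensions). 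Hence $\mathfrak{n}^{\perp}$ carries the trivial $\mathfrak{h}$-action, i.e. $\mathfrak{n}^{\perp}\subseteq Z_{\mathfrak{r}}(\mathfrak{h})$.

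For the second step, where strongness of $\mathfrak{h}$ is essential, fix $v\in Z_{\mathfrak{r}}(\mathfrak{h})$ and write $Z_{\mathfrak{r}}(\mathfrak{h})=\mathfrak{h}_r\oplus W$. By Lemma \ref{goodlevi1}(2) applied to a maximal compactly embedded subalgebra $\mathfrak{k}\supseteq\mathfrak{h}$ we have $[\mathfrak{h}_r,\mathfrak{s}]\subseteq[\mathfrak{k}\cap\mathfrak{r},\mathfrak{s}]=0$, so it suffices to treat $v\in W$, $v\neq 0$ (then $v\notin\mathfrak{h}_r$). The subspace $\mathfrak{q}:=\mathfrak{h}+\mathfrak{s}+\mathbb{R}v$ contains $\mathfrak{h}$ and is $\ad(\mathfrak{h})$-invariant (using $[\mathfrak{h},\mathfrak{s}]\subseteq\mathfrak{s}$ and $[\mathfrak{h},v]=0$), hence by Corollary \ref{cor1} it is a subalgebra of $\mathfrak{g}$. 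Thus $[\mathfrak{s},v]\subseteq\mathfrak{q}$, while also $[\mathfrak{s},v]\subseteq[\mathfrak{s},\mathfrak{r}]\subseteq\mathfrak{n}$, so $[\mathfrak{s},v]\subseteq\mathfrak{q}\cap\mathfrak{n}$. Projecting a general element of $\mathfrak{q}$ modulo $\mathfrak{n}$ along the Levi decomposition gives $\mathfrak{q}\cap\mathfrak{n}\subseteq\mathbb{R}v+\mathfrak{h}_r$, and since $\mathfrak{h}_r\cap\mathfrak{n}=0$ and $v\notin\mathfrak{h}_r$ this intersection is at most one-dimensional.

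To conclude, note that $\mathfrak{q}\cap\mathfrak{n}$ is $\ad(\mathfrak{s})$-invariant ($\mathfrak{s}$ is a subalgebra contained in the subalgebra $\mathfrak{q}$, and $\mathfrak{n}$ is an ideal), hence a one-dimensional $\mathfrak{s}$-module; as $\mathfrak{s}=[\mathfrak{s},\mathfrak{s}]$ is perfect, $\mathfrak{s}$ acts trivially on it, so $[\mathfrak{s},\mathfrak{q}\cap\mathfrak{n}]=0$. From $[\mathfrak{s},v]\subseteq\mathfrak{q}\cap\mathfrak{n}$ we get $[\mathfrak{s},[\mathfrak{s},v]]=0$, and the Jacobi identity then gives $[[\mathfrak{s},\mathfrak{s}],v]=0=[\mathfrak{s},v]$. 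Hence $Z_{\mathfrak{r}}(\mathfrak{h})\subseteq Z_{\mathfrak{r}}(\mathfrak{s})$, and with the first step $\mathfrak{n}^{\perp}\subseteq Z_{\mathfrak{r}}(\mathfrak{s})$, proving $[\mathfrak{n}^{\perp},\mathfrak{s}]=0$. The only point requiring care is keeping $\mathfrak{q}\cap\mathfrak{n}$ one-dimensional: it is precisely the toral part $\mathfrak{h}_r$ sitting inside the radical that could a priori enlarge this intersection, which is why the refined ($\Ad_G(K)$-invariant) Levi decomposition of Lemma \ref{goodlevi1}, killing $[\mathfrak{h}_r,\mathfrak{s}]$, is convenient; beyond that the proof just combines Corollary \ref{cor1} with the absence of nonzero one-dimensional representations of a semisimple Lie algebra.
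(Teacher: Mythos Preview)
Your argument is correct, but it takes a noticeably longer route than the paper's. The paper applies strongness once, to the whole $\ad(\mathfrak{h})$-invariant subspace $\mathfrak{q}=\mathfrak{n}^{\perp}\oplus\mathfrak{s}+\mathfrak{h}$: this is a subalgebra, so $[\mathfrak{s},\mathfrak{n}^{\perp}]\subset\mathfrak{q}\cap\mathfrak{n}$, and then one argues that this intersection is zero. You instead first prove $[\mathfrak{h},\mathfrak{n}^{\perp}]=0$ (your Step~1), then go vector by vector with $\mathfrak{q}=\mathfrak{h}+\mathfrak{s}+\mathbb{R}v$, bound $\dim(\mathfrak{q}\cap\mathfrak{n})\leq 1$, and finish with the Jacobi/semisimplicity trick. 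Two remarks on your write-up: (i) invoking Lemma~\ref{goodlevi1} for $[\mathfrak{h}_r,\mathfrak{s}]=0$ is unnecessary and slightly delicate (the Levi complement produced there need not be the given $\mathfrak{s}$); the conclusion follows directly from $[\mathfrak{h}_r,\mathfrak{s}]\subset[\mathfrak{h},\mathfrak{s}]\cap[\mathfrak{r},\mathfrak{g}]\subset\mathfrak{s}\cap\mathfrak{n}=0$; (ii) your Step~1 is simpler than you make it: once $\mathfrak{n}^{\perp}$ is $\ad(\mathfrak{h})$-invariant, $[\mathfrak{h},\mathfrak{n}^{\perp}]\subset\mathfrak{n}^{\perp}\cap[\mathfrak{g},\mathfrak{r}]\subset\mathfrak{n}^{\perp}\cap\mathfrak{n}=0$ --- no need to pass to $\mathfrak{r}/\mathfrak{n}$. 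What your approach buys is that you never need to know whether $\mathfrak{h}_r\subset\mathfrak{n}^{\perp}$; the paper's one-line ``since $\mathfrak{h}\cap\mathfrak{n}=0$'' tacitly uses $(\mathfrak{n}^{\perp}+\mathfrak{h}_r)\cap\mathfrak{n}=0$, which is equivalent to $\mathfrak{h}_r\subset\mathfrak{n}^{\perp}$ and is not obviously automatic for an arbitrary $\ad(\mathfrak{h})$-invariant inner product when $\mathfrak{h}$ has nonzero fixed vectors in $\mathfrak{n}$. Your vector-by-vector bound together with the perfectness of $\mathfrak{s}$ sidesteps this point cleanly.
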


\begin{proof} Since $\mathfrak{s}$ is chosen to be $\ad(\mathfrak{h})$-invariant, $\mathfrak{n}$ and $\mathfrak{n}^{\perp}$ are $\ad(\mathfrak{h})$-invariant, then
$\mathfrak{n}^{\perp}\oplus \mathfrak{s}$ is also $\ad(\mathfrak{h})$-invariant. Since $\mathfrak{h}$ be a strong proper Lie subalgebra of $\mathfrak{g}$,
then $\mathfrak{n}^{\perp}\oplus \mathfrak{s} +\mathfrak{h}$ is a subalgebra of $\mathfrak{g}$. On the other hand, it is well known that
$[\mathfrak{s},\mathfrak{n}^{\perp}]\subset [ \mathfrak{g},\mathfrak{r}]\subset \mathfrak{n}$. Therefore, $[\mathfrak{s},\mathfrak{n}^{\perp}]=0$
(since $\mathfrak{h}\cap \mathfrak{n}=0$), q.e.d.
\end{proof}

By Theorem \ref{th.str1}, the radical $R$ of any Lie group $G,$ containing some strong subgroup
$H\neq \{e\},$ has a form $R=A\ltimes B,$ semidirect product of abelian Lie groups $A,$ $B.$

In particular, {\it if $G$ is solvable itself}, then it also has such structure.

\medskip

\begin{example}
\label{ex}
Let us consider the group $G=R=A\ltimes B,$ $B=(\mathbb{R}^n,+),$
$A=\mathbb{R}^q\times \operatorname{SO}(2)^l,$ $l,q\in \{0\}\cup\mathbb{N},$ $l+q\geq 1,$ $H=\operatorname{SO}(2)^l.$
The Lie group $G,$ as a semidirect
product, is specified by a {\it monomorphism} $\varphi: A\rightarrow \operatorname{GL}(n,\mathbb{R}).$

Assume that $A^{\#}:=\varphi(A)\subset \operatorname{GL}(n,\mathbb{R}),$ $H^{\#}:=\varphi(H),$ and any $H^{\#}$-invariant vector subspace $W\subset \mathbb{R}^n$ is also $A^{\#}$-invariant.
\end{example}

Using the proof of Proposition 5 in \cite{BerGor2014}, we obtain that $A^{\#}$ is a completely reducible group.
Then, the Proposition 6 of \cite{BerGor2014} implies the following statement.

\begin{prop}
\label{alga}
In some basis of the space $\mathbb{R}^n,$ the Lie algebra $a^{\#}$ of $A^{\#}$ has a form
\begin{equation}
\label{pa}
a^{\#}\subset \oplus_{i=0}^{l}\left(\begin{array}{cc}
a_i & -b_i\\
b_i & a_i
\end{array}\right)\oplus \{a_+\,E_{n-2l}\},
\end{equation}
where $0\leq 2l\leq n,$ $0\leq q\leq l+\min\{1,n-2l\},$ $E_{n-2l}$ is the unit matrix of order $n-2l$ if $n>2l;$
$b_i,$  $a_i, a_+$ are real-valued variables with linearly independent $b_i,$ $i=1,\dots, l.$
\end{prop}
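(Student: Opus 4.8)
The plan is to reduce the claim to linear algebra of the abelian, completely reducible group $A^{\#}$ and then to bring in the compactness of $H^{\#}$; the technical core is already contained in Propositions~5 and~6 of \cite{BerGor2014}. Since $A=\mathbb{R}^q\times\operatorname{SO}(2)^l$ is abelian, so is $A^{\#}=\varphi(A)$, hence its Lie algebra $a^{\#}\subset\mathfrak{gl}(n,\mathbb{R})$ is an abelian subalgebra. From the hypothesis that every $H^{\#}$-invariant subspace of $\mathbb{R}^n$ is $A^{\#}$-invariant one gets, by (the proof of) Proposition~5 of \cite{BerGor2014}, that $A^{\#}$ is completely reducible, i.e.\ $\mathbb{R}^n$ is a semisimple $A^{\#}$-module; as $A^{\#}$ is connected and abelian, $\mathbb{R}^n$ is then a semisimple $a^{\#}$-module, so after complexification the commuting operators of $a^{\#}$ become simultaneously diagonalizable. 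Hence $\mathbb{R}^n$ splits over $\mathbb{R}$ as a direct sum of $a^{\#}$-irreducible subspaces of real dimension $1$ or $2$: on a $1$-dimensional one $a^{\#}$ acts by a real linear functional, and on a $2$-dimensional one it acts through some $\mathbb{R}$-linear map $a^{\#}\to\mathbb{C}$, i.e.\ in a suitable basis of that plane by matrices $\begin{pmatrix} a & -b\\ b & a\end{pmatrix}$.

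Next I would use $H^{\#}$. As $\varphi$ is a monomorphism and $\operatorname{SO}(2)^l$ is compact, $H^{\#}=\varphi(\operatorname{SO}(2)^l)$ is a compact connected torus of dimension $l$; averaging an inner product over $H^{\#}$ we may conjugate so that $H^{\#}\subset\operatorname{SO}(n)$, and then $\mathbb{R}^n$ is the direct sum of the fixed subspace $W_0$ of $H^{\#}$ and of $2$-dimensional planes on each of which $H^{\#}$ acts by rotations with angular velocity a nonzero functional $\beta_j$ on the Lie algebra $h^{\#}$ of $H^{\#}$. Since $h^{\#}$ lies faithfully in $\mathfrak{gl}(n,\mathbb{R})$, the $\beta_j$ span $(h^{\#})^{\ast}$, so there are at least $l$ such planes and thus $2l\le n$. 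Choosing $l$ of them whose functionals $\beta_1,\dots,\beta_l$ form a basis of $(h^{\#})^{\ast}$, the corresponding blocks $\begin{pmatrix} a_i & -b_i\\ b_i & a_i\end{pmatrix}$ have skew parts $b_i$ that restrict on $h^{\#}$ to a basis of $(h^{\#})^{\ast}$, hence are linearly independent as functionals on $a^{\#}$.

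It remains to show that $a^{\#}$ acts by one and the same real scalar on the remaining $n-2l$ coordinates; this is where Proposition~6 of \cite{BerGor2014} enters. On $W_0$ every subspace is $H^{\#}$-invariant, so by hypothesis $A^{\#}$ preserves every subspace of $W_0$ and therefore acts there by real scalars; on an isotypic block where $H^{\#}$ acts through a single character, $A^{\#}$ commutes with $H^{\#}$, hence is complex linear there and preserves every complex line (again by hypothesis), so acts by a complex scalar, and by the choice made in the previous step such a block either is one of the $l$ selected rotation blocks or carries a character proportional to some $\beta_i$; reorganizing the basis accordingly, all the surplus dimensions collapse into a single real scalar block $a_+E_{n-2l}$, which yields the inclusion \eqref{pa}.

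Finally, the bound on $q$ is a dimension count. The space of matrices of the form in \eqref{pa} has real dimension $l$ (the coefficients $a_i$) $+\,l$ (the coefficients $b_i$) $+\,\min\{1,n-2l\}$ (the scalar $a_+$, present precisely when $n>2l$); since $a^{\#}$ is contained in it and, $\varphi$ being injective, $\dim a^{\#}=q+l$, we obtain $q+l\le 2l+\min\{1,n-2l\}$, that is $q\le l+\min\{1,n-2l\}$. The step I expect to be the main obstacle is the third one: ruling out extra non-scalar rotation blocks and showing that the complement of the essential $2l$-dimensional rotation part is acted on by a single real character; this is precisely where the structure theory of \cite{BerGor2014} does the substantive work, as opposed to the soft arguments of the first two steps.
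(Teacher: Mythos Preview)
Your plan is essentially the same as the paper's: the paper's entire argument is the two-line remark that complete reducibility of $A^{\#}$ follows from (the proof of) Proposition~5 in \cite{BerGor2014} and that the block form \eqref{pa} then follows from Proposition~6 in \cite{BerGor2014}. Your elaboration of steps~1--2 and the dimension count for $q$ are correct add-ons, and you correctly identify that the substantive content of step~3 (forcing the complement of the $l$ essential rotation planes to carry a single real scalar) is exactly what is outsourced to Proposition~6 of \cite{BerGor2014}.
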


\begin{remark}
The equality $l=0$ is equivalent to the condition that $H$ is trivial.
\end{remark}

\section{Geodesic orbit manifolds and the main problem}
\vspace{0.2cm}

\begin{definition}
A geodesic $\gamma$ on a Riemannian manifold $(M, (\cdot,\cdot))$ is called homogeneous
if it is an orbit of some 1-parameter subgroup $\overline{\gamma}(t)\subset I(M,(\cdot,\cdot))$
(and, therefore, an integral curve of the Killing vector field generated by this subgroup).
\end{definition}

\begin{definition}
A Riemannian manifold $(M, (\cdot,\cdot))$ is called a manifold with homogeneous geodesics or a geodesic orbit manifold
(briefly, a GO-manifold) if each of its geodesics is homogeneous.
\end{definition}

It is clear that such a manifold is {\it homogeneous}, i.e. it admits a transitive isometry group, for example, the group $I(M, (\cdot,\cdot))$
or its connected component of the identity.

Famous examples of geodesic orbit manifolds are {\it Riemannian symmetric spaces}.

\begin{definition}
\label{ex6}
A homogeneous Riemannian space $(M=G/H, (\cdot,\cdot))$ is called a geodesic orbit (GO-space) if each its geodesic
is an orbit of some 1-parameter subgroup in $G$.
\end{definition}

Geodesic orbit manifolds and their realizations as homogeneous
spaces $M=G/H$ with $G$-invariant Riemannian metrics $(\cdot,\cdot)$ are the main goal of the studies in the monograph \cite{BerNik2020}.

Since the group $H$ is compact, the homogeneous space $G/H$ {\it is reductive}, that is, there exists a vector subspace $\mathfrak{p}\subset \mathfrak{g}$ such that
$$
\mathfrak{g}=\mathfrak{h}\oplus\mathfrak{p},\quad {\Ad}_{H}(\mathfrak{p})\subset \mathfrak{p}
\quad \Rightarrow \quad
[\mathfrak{h},\mathfrak{h}]\subset\mathfrak{h},\quad [\mathfrak{h},\mathfrak{p}]\subset\mathfrak{p}.
$$

We define the operator $U:\mathfrak{p}\times\mathfrak{p}\rightarrow\mathfrak{p}$ by the formula
$$
2(U(X,Y),Z)=([Z,X]_{\mathfrak{p}},Y)+(X,[Z,Y]_{\mathfrak{p}}).
$$

\begin{theorem}[\cite{BerNik2020}]\label{th_main}
Let $(M=G/H, g)$ be a homogeneous Riemannian manifold, $X\in\mathfrak{p}$, $Y\in\mathfrak{h}$.
Then $\gamma(t)=\exp(t(X+Y))(o)$ is a geodesic if and only if one of the following conditions holds:

{\rm 1)} $[X,Y]=U(X,X)$;

{\rm 2)} $([Y,X],Z)=(X,[X,Z]_{\mathfrak{p}})$, for any $Z\in\mathfrak{p}$;

{\rm 3)} $([X+Y,Z]_{\mathfrak{p}}, X)=0$ for any $Z\in\mathfrak{p}$.

A homogeneous Riemannian manifold $(M=G/H, g)$ is a geodesic orbit space if and only if for any $X\in\mathfrak{p}$ there exists
$Y\in\mathfrak{h}$ such that $X+Y$ is a geodesic vector, i.e. $({[X+Y,Z]}_{\mathfrak{p}}, X)=0$ for all $Z\in\mathfrak{p}$.
\end{theorem}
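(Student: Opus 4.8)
The plan is to reduce the geodesic condition for the curve $\gamma(t)=\exp(t(X+Y))(o)$ to a single algebraic identity at the base point $o$, to establish that identity by a short computation with Killing vector fields, then to pass between 1)--3) by linear algebra, and finally to deduce the GO criterion by a homogeneity argument. \emph{Reduction to the origin.} Write $Z=X+Y$. Under the standard identification $T_oM\cong\mathfrak{p}$, in which $V^{*}_o$ corresponds to $V_{\mathfrak{p}}$ (here $V^{*}$ is the Killing field on $M$ generated by $V\in\mathfrak{g}$ through the $G$-action), one has $\dot\gamma(0)=Z_{\mathfrak{p}}=X$. Each $\psi_s:=\exp(sZ)$ is an isometry of $(M,(\cdot,\cdot))$ with $\psi_s(\gamma(t))=\gamma(s+t)$, so it maps the velocity field of $\gamma$ to itself; since isometries are affine, $\nabla_{\dot\gamma}\dot\gamma$ at $\gamma(t)$ equals $(d\psi_t)_o$ applied to $\nabla_{\dot\gamma}\dot\gamma$ at $o$. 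Hence $\gamma$ is a geodesic if and only if $\nabla_{\dot\gamma}\dot\gamma$ vanishes at $o$, equivalently $(\nabla_{\dot\gamma}\dot\gamma|_o,\,W^{*}_o)=0$ for every $W\in\mathfrak{p}$, because the vectors $W^{*}_o$ fill out $\mathfrak{p}=T_oM$.

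\emph{The key identity.} For the Killing field $W^{*}$ one has $(\nabla_V W^{*},V)=0$ for every tangent vector $V$, so along $\gamma$
\[
\frac{d}{dt}\,(\dot\gamma(t),W^{*}_{\gamma(t)})=(\nabla_{\dot\gamma}\dot\gamma,W^{*})+(\dot\gamma,\nabla_{\dot\gamma}W^{*})=(\nabla_{\dot\gamma}\dot\gamma,W^{*}).
\]
On the other hand $\dot\gamma(t)=Z^{*}_{\gamma(t)}$, and applying the isometry $\psi_{-t}$, which fixes $Z^{*}$ (its own flow) and sends $W^{*}$ to $(\Ad(\exp(-tZ))W)^{*}$, one obtains
\[
(\dot\gamma(t),W^{*}_{\gamma(t)})=\bigl(X,\;(\Ad(\exp(-tZ))W)_{\mathfrak{p}}\bigr).
\]
Differentiating at $t=0$, using $\frac{d}{dt}\Ad(\exp(-tZ))=-\ad(Z)$ at $t=0$, gives $(\nabla_{\dot\gamma}\dot\gamma|_o,W^{*}_o)=-([X+Y,W]_{\mathfrak{p}},X)$. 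Thus $\gamma$ is a geodesic precisely when $([X+Y,W]_{\mathfrak{p}},X)=0$ for all $W\in\mathfrak{p}$, which is condition 3).

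\emph{Equivalence of the three conditions.} Since $(\cdot,\cdot)$ is $\Ad(H)$-invariant, $\ad(Y)$ is skew-symmetric on $\mathfrak{p}$ and $[Y,Z]\in\mathfrak{p}$ for $Z\in\mathfrak{p}$; hence $([X+Y,Z]_{\mathfrak{p}},X)=([X,Z]_{\mathfrak{p}},X)-(Z,[Y,X])$, and therefore 3) $\Leftrightarrow$ 2). Next, from the defining formula for $U$ with both arguments equal to $X$ one gets $(U(X,X),Z)=([Z,X]_{\mathfrak{p}},X)=-(X,[X,Z]_{\mathfrak{p}})$ for all $Z\in\mathfrak{p}$; as $[X,Y]=-[Y,X]$ lies in $\mathfrak{p}$, the equality $[X,Y]=U(X,X)$ holds if and only if $([Y,X],Z)=(X,[X,Z]_{\mathfrak{p}})$ for all $Z\in\mathfrak{p}$, that is, if and only if 2) holds. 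So 1) $\Leftrightarrow$ 2) $\Leftrightarrow$ 3).

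\emph{The geodesic orbit criterion.} By transitivity of $G$, every geodesic of $(M,(\cdot,\cdot))$ is carried by some element of $G$ onto a geodesic through $o$, and being an orbit of a one-parameter subgroup of $G$ is preserved under the action of $G$; hence $(M,(\cdot,\cdot))$ is a GO-space if and only if every geodesic issuing from $o$ is homogeneous. A geodesic through $o$ with initial velocity $X\in\mathfrak{p}$ is homogeneous exactly when, after a constant-speed reparametrization, it coincides with $\exp(t(X+Y))(o)$ for some $Y\in\mathfrak{h}$: every orbit of a one-parameter subgroup of $G$ through $o$ has the form $\exp(tW)(o)$ with velocity $W_{\mathfrak{p}}$, this curve automatically has constant speed, rescaling so that $W_{\mathfrak{p}}=X$ writes $W=X+Y$ with $Y\in\mathfrak{h}$, and the geodesic through $o$ with velocity $X$ is unique. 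Combined with the first part this yields: $(M=G/H,(\cdot,\cdot))$ is geodesic orbit if and only if for every $X\in\mathfrak{p}$ there exists $Y\in\mathfrak{h}$ with $([X+Y,Z]_{\mathfrak{p}},X)=0$ for all $Z\in\mathfrak{p}$. The only delicate step in the whole argument is the key identity: one must handle carefully the identification $T_oM\cong\mathfrak{p}$, the transformation law $\psi_{*}W^{*}=(\Ad(\psi)W)^{*}$ for fundamental vector fields of the $G$-action, and the sign from differentiating $\Ad(\exp(-tZ))$; everything else is formal manipulation inside the reductive decomposition $\mathfrak{g}=\mathfrak{h}\oplus\mathfrak{p}$.
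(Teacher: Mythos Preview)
The paper does not supply its own proof of this theorem: it is quoted verbatim from the monograph \cite{BerNik2020} and used as a black box, so there is nothing in the paper to compare your argument against.

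Your proof is correct and follows the standard route. The reduction to the origin via the one-parameter group of isometries $\psi_s=\exp(sZ)$ is sound; the key identity is obtained cleanly from the Killing equation $(\nabla_V W^{*},V)=0$ together with the transformation law $(L_g)_{*}W^{*}=(\Ad(g)W)^{*}$ for fundamental vector fields, and the sign from differentiating $\Ad(\exp(-tZ))$ is handled correctly. The passage 3)$\Leftrightarrow$2)$\Leftrightarrow$1) is straightforward linear algebra inside the reductive decomposition, using that $[\mathfrak{h},\mathfrak{p}]\subset\mathfrak{p}$ and that $\ad(Y)|_{\mathfrak{p}}$ is skew for $Y\in\mathfrak{h}$; your computation $(U(X,X),Z)=([Z,X]_{\mathfrak{p}},X)$ is exactly what the defining formula for $U$ gives when both arguments are $X$. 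The final GO criterion is deduced correctly: homogeneity reduces the question to geodesics through $o$, and the constant-speed property of orbits of one-parameter isometry groups lets you normalize $W_{\mathfrak{p}}=X$ and write $W=X+Y$.
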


We need some extensive citations from \cite{Gord1981}, \cite{Gor96}, and \cite{GorNik2018}. First we will introduce the necessary notations.

Given connected Lie groups $A$ and $G$ with $G\subset A$ choose Levi factors $G_{ss}$ and $A_{ss}$ of $G$ and $A$ with $G_{ss}\subset A_{ss}$.
Denote by $\mathfrak{a}$, $\mathfrak{g}$, $\mathfrak{a}_{ss}$, and $\mathfrak{g}_{ss}$ the Lie algebras of $A$, $G$, $A_{ss}$, and $G_{ss}$ respectively.
Write $\mathfrak{a}_{ss}=\mathfrak{a}_{nc}\oplus\mathfrak{a}_c$ and
$\mathfrak{g}_{ss}=\mathfrak{g}_{nc}\oplus\mathfrak{g}_c$, where
$\mathfrak{a}_{nc}$ and $\mathfrak{g}_{nc}$ are semisimple of the noncompact type.
Let $A_{nc}$, $A_{c}$, $G_{nc}$, and $G_c$ be the connected subgroups in $A$ with Lie algebras $\mathfrak{a}_{nc}$, $\mathfrak{a}_{c}$,
$\mathfrak{g}_{nc}$, and $\mathfrak{g}_{c}$.

\begin{theorem}[Theorem (2.2) in \cite{Gord1981}]
\label{th_nc}
Let the connected Lie group $A$ be a product $A=GL$ of a connected subgroup $G$ and a compact subgroup $L$. Then $A_{nc}=G_{nc}$.
\end{theorem}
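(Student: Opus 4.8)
The plan is to reduce the statement to the purely Lie-algebraic claim that $\mathfrak{a}_{nc}=\mathfrak{g}_{nc}$, where these are the noncompact-type semisimple ideals of chosen Levi factors $\mathfrak{a}_{ss}\supset\mathfrak{g}_{ss}$. First I would observe that $\mathfrak{a}=\mathfrak{g}+\mathfrak{l}$ as vector spaces, where $\mathfrak{l}=\Lie(L)$ is a compact subalgebra; this is the infinitesimal version of $A=GL$. From this I want to extract that $\mathfrak{a}$ has no ``extra'' noncompact simple factors beyond those already present in $\mathfrak{g}$. The inclusion $\mathfrak{g}_{nc}\subset\mathfrak{a}_{nc}$ is automatic once the Levi factors are chosen compatibly (a noncompact-type semisimple subalgebra of $\mathfrak{a}_{ss}$ must lie in $\mathfrak{a}_{nc}$, since its projection to the compact factor $\mathfrak{a}_c$ would be a semisimple subalgebra of a compact algebra that is simultaneously a quotient of a noncompact-type algebra, forcing it to be trivial). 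So the whole content is the reverse inclusion $\mathfrak{a}_{nc}\subset\mathfrak{g}_{nc}$.

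For the reverse inclusion, the key step is a dimension/trace count via the Killing form. Let $\mathfrak{r}$ be the radical of $\mathfrak{a}$; since $L$ is compact, its semisimple part $\mathfrak{l}_{ss}$ is of compact type, and I may arrange (using that all maximal compactly embedded subalgebras are conjugate, and adapting the Levi decomposition as in the discussion preceding Lemma \ref{goodlevi1}) that $\mathfrak{l}_{ss}\subset\mathfrak{a}_{ss}$ and in fact $\mathfrak{l}_{ss}\subset\mathfrak{a}_c\oplus(\text{compact part of }\mathfrak{g}_{ss})$, while $\mathfrak{l}\cap\mathfrak{r}$ is a compact torus. The equation $\dim\mathfrak{a}=\dim\mathfrak{g}+\dim\mathfrak{l}-\dim(\mathfrak{g}\cap\mathfrak{l})$ combined with the fact that $\mathfrak{g}\cap\mathfrak{l}$ contains a maximal compactly embedded subalgebra of $\mathfrak{g}$ then pins down $\mathfrak{a}_{nc}$: writing $\mathfrak{a}_{nc}=\mathfrak{k}_{nc}\oplus\mathfrak{p}_{nc}$ for a Cartan decomposition, the ``$\mathfrak{p}$-part'' of $\mathfrak{a}_{nc}$ must be accounted for entirely by $\mathfrak{g}$, because it cannot come from the compact $\mathfrak{l}$. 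Thus $\mathfrak{p}_{nc}\subset\mathfrak{g}$, and since $\mathfrak{a}_{nc}=[\mathfrak{p}_{nc},\mathfrak{p}_{nc}]\oplus\mathfrak{p}_{nc}$, we get $\mathfrak{a}_{nc}\subset\mathfrak{g}$; being a noncompact-type semisimple ideal of $\mathfrak{a}$ contained in $\mathfrak{g}$, it lies in $\mathfrak{g}_{ss}$ and hence (by the first paragraph) in $\mathfrak{g}_{nc}$.

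Finally I would pass back from Lie algebras to groups: $A_{nc}$ and $G_{nc}$ are by definition the connected subgroups with Lie algebras $\mathfrak{a}_{nc}$ and $\mathfrak{g}_{nc}$, so $\mathfrak{a}_{nc}=\mathfrak{g}_{nc}$ immediately yields $A_{nc}=G_{nc}$ as subgroups of $A$.

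The main obstacle I anticipate is the bookkeeping in the adapted Levi/Cartan decompositions — one must choose $\mathfrak{g}_{ss}$, $\mathfrak{a}_{ss}$, the Cartan involutions, and the compact subalgebra $\mathfrak{l}$ all mutually compatibly so that $\mathfrak{g}\cap\mathfrak{l}$ really does absorb the full compactly embedded part and the ``$\mathfrak{p}_{nc}$ lies in $\mathfrak{g}$'' argument goes through cleanly; without this care the decompositions $\mathfrak{a}=\mathfrak{g}+\mathfrak{l}$ and $\mathfrak{a}_{ss}=\mathfrak{a}_{nc}\oplus\mathfrak{a}_c$ need not interact transparently. A secondary subtlety is that $G$ and $L$ need not be closed or simply connected, so the argument must stay at the Lie-algebra level and only convert to groups at the very end.
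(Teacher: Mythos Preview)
The paper does not give its own proof of this statement: Theorem~\ref{th_nc} is quoted verbatim from \cite{Gord1981} (as ``Theorem~(2.2)'') and used as a black box in the proof of Theorem~\ref{ce1}, with no argument supplied. So there is nothing in the present paper to compare your proposal against; any comparison would have to be with Gordon's original argument in \cite{Gord1981}, which is outside the scope of this paper.

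On the proposal itself: the overall strategy (pass to Lie algebras, use that $\mathfrak{l}$ is compactly embedded, and show that the noncompact semisimple part of $\mathfrak{a}$ is already contained in $\mathfrak{g}$) is the natural one. But the step ``the $\mathfrak{p}$-part of $\mathfrak{a}_{nc}$ must be accounted for entirely by $\mathfrak{g}$, because it cannot come from the compact $\mathfrak{l}$'' is not yet an argument. From $\mathfrak{a}=\mathfrak{g}+\mathfrak{l}$ (a sum, not direct) one cannot conclude $\mathfrak{p}_{nc}\subset\mathfrak{g}$ just by noting that $\mathfrak{l}$ is compact: an element of $\mathfrak{p}_{nc}$ could in principle decompose as $X+Y$ with $X\in\mathfrak{g}$, $Y\in\mathfrak{l}$, neither lying in $\mathfrak{p}_{nc}$. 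You need an actual mechanism --- for instance, an $\Ad(L)$-invariant inner product adapted to a simultaneous Cartan decomposition, or the observation that $G$ acts transitively on the noncompact symmetric space $A/K$ for $K$ a maximal compact subgroup containing $L$ --- to force $\mathfrak{p}_{nc}\subset\mathfrak{g}$. Also, the preliminary claim $\mathfrak{a}=\mathfrak{g}+\mathfrak{l}$ at the Lie-algebra level deserves a line of justification (it follows from $A=GL$ via the differential of the multiplication map, or from transitivity of $G$ on $A/L$), since $G$ and $L$ need not be closed.
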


\begin{theorem}[Theorem (3.1) in \cite{Gord1981}]
\label{th_rad}
Let $A=GL$ be as in Theorem \ref{th_nc} and suppose the radical of $G$ is nilpotent. Denote the radicals of $\mathfrak{a}$ and $\mathfrak{g}$ by
$\mathfrak{m}$ and $\mathfrak{n}$ respectively. Then

{\rm(a)} $\mathfrak{n}$ is the sum of ideals $\mathfrak{n}=\mathfrak{n}_1\oplus\mathfrak{n}_2$ where
$\mathfrak{n}_1=\mathfrak{n}\cap\mathfrak{a}_{ss}$ is central in $\mathfrak{g}$ and $[\mathfrak{g},\mathfrak{n}]\subset \mathfrak{n}_2$.

{\rm(b)} $\mathfrak{m}$ is a vector space direct sum $\mathfrak{m}=\mathfrak{u}\oplus\mathfrak{n}'_2$ of an abelian subalgebra
$\mathfrak{u}$, compactly embedded in $\mathfrak{a}$, and an ideal $\mathfrak{n}'_2$ containing $[\mathfrak{g},\mathfrak{n}]$.

{\rm(c)} $[\mathfrak{a},\mathfrak{m}]\subset\mathfrak{n}'_2$ and $[\mathfrak{g}_{ss},\mathfrak{m}]=[\mathfrak{g}_{ss},\mathfrak{n}]$.

{\rm(d)} There exists an isomorphism $\varphi:\mathfrak{g}_{ss}+\mathfrak{n}_1+\mathfrak{n}'_2\rightarrow \mathfrak{g}$ which maps $\mathfrak{n}'_2$ onto
$\mathfrak{n}_2$ and restricts to the identity map on $[\mathfrak{g},\mathfrak{g}]+\mathfrak{n}_1$.
\end{theorem}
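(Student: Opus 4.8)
The plan is to work entirely at the level of Lie algebras, with Theorem~\ref{th_nc} as the main engine; the hypothesis $A=GL$ ($L$ compact) enters through the ensuing decomposition $\mathfrak{a}=\mathfrak{g}+\mathfrak{l}$, where $\mathfrak{l}$ is reductive with $[\mathfrak{l},\mathfrak{l}]$ compact semisimple. First I would fix a Levi decomposition $\mathfrak{a}=\mathfrak{a}_{ss}\ltimes\mathfrak{m}$; after conjugating the Levi factor by an inner automorphism we may take $\mathfrak{g}_{ss}\subseteq\mathfrak{a}_{ss}$, and let $\pi\colon\mathfrak{a}\to\mathfrak{a}_{ss}\cong\mathfrak{a}/\mathfrak{m}$ be the quotient homomorphism, normalized to be the identity on $\mathfrak{a}_{ss}$. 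I would record three elementary facts for repeated use: (i) since $\mathfrak{n}=\operatorname{rad}(\mathfrak{g})$ is nilpotent, every $x\in\mathfrak{n}$ is $\ad_{\mathfrak{g}}$-nilpotent ($\ad_{\mathfrak{g}}x$ vanishes on $\mathfrak{g}/\mathfrak{n}$ and is nilpotent on $\mathfrak{n}$); (ii) for $x\in\mathfrak{a}_{ss}$ the operator $\ad_{\mathfrak{a}}x$ preserves both the subalgebra $\mathfrak{a}_{ss}$ and the ideal $\mathfrak{m}$, so if $x=x_s+x_n$ is the Jordan decomposition in the semisimple algebra $\mathfrak{a}_{ss}$, then by functoriality $\ad_{\mathfrak{a}}x=\ad_{\mathfrak{a}}x_s+\ad_{\mathfrak{a}}x_n$ is the Jordan decomposition of $\ad_{\mathfrak{a}}x$ in $\End(\mathfrak{a})$; (iii) Theorem~\ref{th_nc} gives $\mathfrak{a}_{nc}=\mathfrak{g}_{nc}\subseteq\mathfrak{g}_{ss}$, so the noncompact simple ideals of $\mathfrak{a}_{ss}$ lie in $\mathfrak{g}_{ss}$, while the compact ideal $\mathfrak{a}_c$ has no nonzero $\ad$-nilpotent element.

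The heart of the matter is part (a): with $\mathfrak{n}_1:=\mathfrak{n}\cap\mathfrak{a}_{ss}$, I would show $\mathfrak{n}_1\subseteq Z(\mathfrak{g})$. Take $x\in\mathfrak{n}_1$ with Jordan decomposition $x=x_s+x_n$ in $\mathfrak{a}_{ss}$. Since $\mathfrak{g}$ is $\ad_{\mathfrak{a}}x$-invariant (as $x\in\mathfrak{g}$), it is invariant under $\ad_{\mathfrak{a}}x_s$ and $\ad_{\mathfrak{a}}x_n$ (polynomials in $\ad_{\mathfrak{a}}x$), and $\ad_{\mathfrak{g}}x=(\ad_{\mathfrak{a}}x)|_{\mathfrak{g}}$ is nilpotent by (i); hence its semisimple part $(\ad_{\mathfrak{a}}x_s)|_{\mathfrak{g}}$ is $0$, i.e.\ $x_s$ centralizes $\mathfrak{g}$, in particular $[x_s,\mathfrak{g}_{ss}]=0$. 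If $x_n\ne 0$, it is a nonzero nilpotent element of $\mathfrak{a}_{ss}$; its $\mathfrak{a}_c$-component is $\ad$-nilpotent in $\mathfrak{a}_c$, hence $0$ by (iii), so $x_n\in\mathfrak{a}_{nc}=\mathfrak{g}_{nc}\subseteq\mathfrak{g}_{ss}\subseteq\mathfrak{g}$. Then $x_s=x-x_n\in\mathfrak{g}$, so $x_s\in Z(\mathfrak{g})\subseteq\mathfrak{n}$, whence $x_n=x-x_s\in\mathfrak{n}$; but $x_n\in\mathfrak{g}_{ss}$ and $\mathfrak{n}\cap\mathfrak{g}_{ss}=0$, so $x_n=0$, a contradiction. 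Thus every $x\in\mathfrak{n}_1$ is semisimple in $\mathfrak{a}_{ss}$, so $\ad_{\mathfrak{a}}x$ is semisimple (it acts semisimply on $\mathfrak{a}_{ss}$ and, by functoriality, on $\mathfrak{m}$), hence $\ad_{\mathfrak{g}}x=(\ad_{\mathfrak{a}}x)|_{\mathfrak{g}}$ is semisimple and nilpotent, so $\ad_{\mathfrak{g}}x=0$. Therefore $\mathfrak{n}_1\subseteq Z(\mathfrak{g})$ is a central ideal and a toral subalgebra of $\mathfrak{a}_{ss}$, and a short check yields an $\ad(\mathfrak{g})$-invariant complement $\mathfrak{n}_2$ of $\mathfrak{n}_1$ in $\mathfrak{n}$ with $[\mathfrak{g},\mathfrak{n}]\subseteq\mathfrak{n}_2$, which is (a). I expect this centrality statement — the passage from ``$\ad_{\mathfrak{g}}$-nilpotent'' to ``semisimple in $\mathfrak{a}_{ss}$'' for elements of $\mathfrak{n}\cap\mathfrak{a}_{ss}$, which is where the hypothesis $A=GL$ is used essentially — to be the one genuinely nontrivial point.

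For (b) and (c) I would analyse $\mathfrak{m}=\operatorname{rad}(\mathfrak{a})$ as an $\mathfrak{a}_{ss}$-module. By complete reducibility $\mathfrak{m}=\mathfrak{m}^{0}\oplus[\mathfrak{a}_{ss},\mathfrak{m}]$ with $\mathfrak{m}^{0}$ the $\mathfrak{a}_{ss}$-fixed part, and $[\mathfrak{a},\mathfrak{m}]\subseteq[\mathfrak{a},\operatorname{rad}(\mathfrak{a})]$ is a nilpotent ideal of $\mathfrak{a}$. Put $\mathfrak{n}_2':=[\mathfrak{a},\mathfrak{m}]+[\mathfrak{g},\mathfrak{n}]$: it is an ideal of $\mathfrak{a}$ inside $\mathfrak{m}$, it contains $[\mathfrak{g},\mathfrak{n}]$, and $[\mathfrak{a},\mathfrak{m}]\subseteq\mathfrak{n}_2'$. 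Choosing $\mathfrak{u}$ to be an $\mathfrak{a}_{ss}$-invariant complement of $\mathfrak{n}_2'$ in $\mathfrak{m}$, it is $\ad(\mathfrak{a}_{ss})$-fixed and, since $[\mathfrak{m},\mathfrak{m}]\subseteq[\mathfrak{a},\mathfrak{m}]\subseteq\mathfrak{n}_2'$, an abelian ideal of $\mathfrak{a}$; it is compactly embedded because, by Theorem~\ref{th_nc} applied once more, any noncompact semisimple direction in $\ad_{\mathfrak{a}}(\mathfrak{u})$ would have to originate in $\mathfrak{g}$, against nilpotency of $\operatorname{rad}(\mathfrak{g})$, so the $\ad_{\mathfrak{a}}(u)$ are semisimple with purely imaginary spectrum. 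The identity $[\mathfrak{g}_{ss},\mathfrak{m}]=[\mathfrak{g}_{ss},\mathfrak{n}]$ then follows: ``$\supseteq$'' because $[\mathfrak{g}_{ss},\mathfrak{n}]=[\mathfrak{g}_{ss},\mathfrak{n}_2]\subseteq[\mathfrak{g}_{ss},\mathfrak{m}]$ (using $\mathfrak{n}_1$ central and $\mathfrak{n}_2\subseteq\mathfrak{m}$), and ``$\subseteq$'' from $\mathfrak{a}=\mathfrak{g}+\mathfrak{l}$ together with $[\mathfrak{g}_{ss},\mathfrak{u}]=0$, which reduces the computation of $[\mathfrak{g}_{ss},\mathfrak{m}]$ to brackets with the image of $\mathfrak{n}$.

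Finally, for (d): both $\mathfrak{g}=\mathfrak{g}_{ss}\ltimes\mathfrak{n}$ and $\mathfrak{g}_{ss}+\mathfrak{n}_1+\mathfrak{n}_2'$ — which one checks is a subalgebra of $\mathfrak{a}$, equal to $\mathfrak{g}_{ss}\ltimes(\mathfrak{n}_1\oplus\mathfrak{n}_2')$ with $\mathfrak{n}_1\oplus\mathfrak{n}_2'$ a nilpotent ideal, in particular $[\mathfrak{n}_1,\mathfrak{n}_2']=0$ — are semidirect products of $\mathfrak{g}_{ss}$ with a nilpotent ideal, with $\mathfrak{n}_1$ the trivial $\mathfrak{g}_{ss}$-summand on each side, and $\mathfrak{n}_2'$ has the same dimension and $\mathfrak{g}_{ss}$-module structure as $\mathfrak{n}_2$. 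Defining $\varphi$ to be the identity on $[\mathfrak{g},\mathfrak{g}]+\mathfrak{n}_1$ (which contains $\mathfrak{g}_{ss}$, $[\mathfrak{g},\mathfrak{n}]$ and $\mathfrak{n}_1$) and a fixed $\mathfrak{g}_{ss}$-equivariant linear isomorphism $\mathfrak{n}_2'\to\mathfrak{n}_2$, I would verify $\varphi$ is a Lie algebra isomorphism: $\mathfrak{g}_{ss}$-equivariance handles $[\mathfrak{g}_{ss},\mathfrak{n}_2']$, the brackets $[\mathfrak{n}_2',\mathfrak{n}_2']$ land in $[\mathfrak{g},\mathfrak{g}]$ where $\varphi$ is the identity, and all brackets involving $\mathfrak{n}_1$ vanish on both sides since $\mathfrak{n}_1$ is central; and $\varphi(\mathfrak{n}_2')=\mathfrak{n}_2$ by construction. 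The passage from Lie algebras back to Lie groups is routine. In summary, beyond the idea used in (a), parts (b)--(d) are module-theoretic bookkeeping over $\mathfrak{a}_{ss}$; the explicit construction in (d) is the most laborious, but not, I expect, the place where the real difficulty lies.
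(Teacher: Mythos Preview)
The paper does not prove this theorem. It is quoted, with attribution, as ``Theorem (3.1) in \cite{Gord1981}'' and is used only as an input to the proof of Theorem~\ref{ce1}; no argument for it appears anywhere in the present paper. There is therefore nothing here against which to compare your proposal --- the comparison would have to be made with Gordon's original article.

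That said, a few places in your sketch would need tightening before it could stand on its own. In your treatment of (c) you assert $\mathfrak{n}_2\subseteq\mathfrak{m}$, but $\mathfrak{n}_2$ was only chosen as an $\ad(\mathfrak{g})$-invariant complement to $\mathfrak{n}_1=\mathfrak{n}\cap\mathfrak{a}_{ss}$ inside $\mathfrak{n}$, and in general $\mathfrak{n}=\operatorname{rad}(\mathfrak{g})$ need not sit inside $\mathfrak{m}=\operatorname{rad}(\mathfrak{a})$; you would have to argue more carefully (e.g.\ via $[\mathfrak{g}_{ss},\mathfrak{n}]\subseteq[\mathfrak{g},\mathfrak{n}]\subseteq\mathfrak{n}_2'$ directly, without the intermediate claim). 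Your justification that $\mathfrak{u}$ is compactly embedded in $\mathfrak{a}$ is too brief: the appeal to Theorem~\ref{th_nc} does not by itself rule out real nonzero eigenvalues of $\ad_{\mathfrak{a}}u$, and this is exactly where the compactness of $L$ and the decomposition $\mathfrak{a}=\mathfrak{g}+\mathfrak{l}$ must be exploited. Finally, in (d) you assert without proof that $\mathfrak{n}_2'$ and $\mathfrak{n}_2$ have the same dimension and $\mathfrak{g}_{ss}$-module structure, and that the map you build respects brackets $[\mathfrak{n}_2',\mathfrak{n}_2']$; these are the substantive points of (d) and deserve an actual argument. None of this contradicts your overall strategy, but the sketch as written is not yet a proof.
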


\begin{prop}[\cite{GorNik2018}]
\label{gornik}
Let $G/H$ be a connected Riemannian $GO$-space and let $\operatorname{Lev}(G)$ be any Levi factor of $G$.
Then the noncompact part $\operatorname{Lev}(G)_{nc}$ is a normal subgroup of $G$, i.e. $\operatorname{Lev}(G)_{nc}$ commutes with the radical
$\operatorname{Rad}(G)$ of $G$.
\end{prop}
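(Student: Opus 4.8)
The plan is to deduce Proposition~\ref{gornik} from the structural results of Gordon recalled above, specializing to the case $A=G$ acting on itself and using the fact that a $GO$-metric forces a very rigid relationship between the semisimple part and the radical. First I would recall that on a Riemannian $GO$-space $G/H$ every geodesic through $o=eH$ has the form $\gamma(t)=\exp(t(X+Y))(o)$ for a suitable $Y\in\mathfrak h$ with $X+Y$ a geodesic vector, by Theorem~\ref{th_main}; in particular the geodesic condition $([X+Y,Z]_{\mathfrak p},X)=0$ holds for all $Z\in\mathfrak p$. The key observation is that if $\mathfrak g_{nc}$ denotes the noncompact part of a Levi factor, then for $X\in\mathfrak g_{nc}$ the associated $Y$ can be taken in $\mathfrak h_s=\mathfrak h\cap\mathfrak s$ (using the $\Ad(H)$-invariant, $\mathfrak h$-adapted Levi decomposition from Lemma~\ref{goodlevi1}, whose properties 1)--5) guarantee that $\mathfrak g_{nc}$ is an ideal-like summand compatible with $\mathfrak h$); then the geodesic equation restricted to directions $Z$ in the radical shows that $[X,Z]$ has no component pairing with $X$ after the twist by $Y$, and since $Y$ acts trivially on the radical, this pins down $[\mathfrak g_{nc},\operatorname{Rad}(\mathfrak g)]$.

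More concretely, the intended argument proceeds in the following steps. (1) Reduce to the Lie-algebra statement: $\operatorname{Lev}(G)_{nc}$ is normal in $G$ iff $[\mathfrak g_{nc},\mathfrak r]=0$, where $\mathfrak r=\operatorname{Rad}(\mathfrak g)$; normality of the connected subgroup then follows since $G$ is connected. (2) Apply Theorem~\ref{th_nc} with $A=G$, $L=H$ (here $G=G\cdot H$ trivially, $H$ compact), to identify $\mathfrak a_{nc}=\mathfrak g_{nc}$ and to secure an $\Ad(H)$-invariant Levi decomposition via Lemma~\ref{goodlevi1}. (3) Fix $X\in\mathfrak g_{nc}\subset\mathfrak p$ (after the reductive splitting $\mathfrak g=\mathfrak h\oplus\mathfrak p$ is chosen compatibly, so that $\mathfrak g_{nc}\subset\mathfrak p$, which is possible because $\mathfrak h\cap\mathfrak g_{nc}$ is a maximal compact subalgebra only in the \emph{strong} case — in general one uses that $\mathfrak h_s\subset\mathfrak g_{ss}$ and that $\mathfrak g_{nc}$ is a sum of noncompact simple ideals of $\mathfrak s$); choose the geodesic vector $X+Y$, $Y\in\mathfrak h$. (4) Decompose $Y=Y_r+Y_s$ along $\mathfrak h_r\oplus\mathfrak h_s$; since $[\mathfrak h_r,\mathfrak s]=0$ and $[\mathfrak h_s,\mathfrak r]\subset$ (a controlled part of $\mathfrak r$), test the geodesic identity $([X+Y,Z]_{\mathfrak p},X)=0$ on $Z\in\mathfrak r\cap\mathfrak p$ and separately on $Z\in\mathfrak g_{nc}$. (5) The $\mathfrak r$-directions give $([X,Z]_{\mathfrak p}+[Y_s,Z]_{\mathfrak p},X)=0$; pairing this with the $\ad(\mathfrak h)$-invariant inner product and summing over a basis, together with $[\mathfrak g_{nc},\mathfrak r]\subset\mathfrak n$ and skew-symmetry of $\ad$ restricted appropriately, forces $[\mathfrak g_{nc},\mathfrak r]$ to be both nilpotent-valued and "geodesically invisible," which upgrades to $[\mathfrak g_{nc},\mathfrak r]=0$. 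Alternatively — and this is probably the cleaner route — I would invoke Theorem~\ref{th_rad} after reducing (by a standard trick) to the case where $\operatorname{Rad}(G)$ is nilpotent: part (c) gives $[\mathfrak g_{ss},\mathfrak m]=[\mathfrak g_{ss},\mathfrak n]$ and, combined with the $GO$-condition applied to $\mathfrak n_1$- and $\mathfrak n_2$-directions, shows $[\mathfrak g_{nc},\mathfrak n]=0$; then a dévissage over the lower central series of $\mathfrak r$ removes the nilpotency hypothesis.

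The hard part will be step (3)--(5): arranging a reductive complement $\mathfrak p$ and the $\mathfrak h$-adapted Levi decomposition simultaneously so that $\mathfrak g_{nc}$ lies in $\mathfrak p$ and the geodesic equation decouples cleanly along the ideals $\mathfrak g_{nc}$, $\mathfrak g_c$, and $\mathfrak r$ — and, most delicately, passing from "$[\mathfrak g_{nc},\mathfrak r]$ pairs trivially with all of $\mathfrak g_{nc}$ in every invariant metric" to "$[\mathfrak g_{nc},\mathfrak r]=0$." The subtlety is that $[\mathfrak g_{nc},\mathfrak r]\subset\mathfrak n$ is nilpotent, so it carries no invariant definite form of its own; one must feed it back through the representation of $\mathfrak g_{nc}$ on $\mathfrak n$, use that this representation is a sum of (a trivial part and) nontrivial irreducibles of a noncompact semisimple algebra, and show a nontrivial such summand would produce a non-$GO$ geodesic — precisely the kind of obstruction that the examples in the paper will later exhibit when $\mathfrak g_{nc}$ is replaced by a compact or solvable piece. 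I expect the actual proof in \cite{GorNik2018} handles this via the Mostow/Gordon decomposition of $\operatorname{Rad}(G)$ together with the observation that $G_{nc}$, being generated by unipotent one-parameter subgroups acting on $M$, must have bounded orbits after projecting out the metric contribution — forcing triviality of the action on the radical.
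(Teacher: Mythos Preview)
The paper does not prove Proposition~\ref{gornik}; it is quoted from \cite{GorNik2018} and invoked later (in the proof of Theorem~\ref{ce1}) as a black box. There is therefore no argument in the present paper to compare your sketch against.

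On the sketch itself: it is not a proof, and several steps are vacuous or unfinished. Step~(2) is empty --- taking $A=G$ and $L=H$ in Theorem~\ref{th_nc} returns the tautology $G_{nc}=G_{nc}$; that theorem has content only when $A$ is a strictly larger transitive group (for instance, the full isometry group of the GO-manifold), and you never introduce such a group. In step~(3) you cannot in general arrange $\mathfrak{g}_{nc}\subset\mathfrak{p}$, since $\mathfrak{h}\cap\mathfrak{g}_{nc}$ may well be nonzero (it is a compact subalgebra of $\mathfrak{g}_{nc}$); your parenthetical remark does not address this. Steps~(4)--(5) and the ``alternative route'' via Theorem~\ref{th_rad} are, as you yourself concede, only outlines: the central implication from ``$[\mathfrak{g}_{nc},\mathfrak{r}]$ pairs trivially with $\mathfrak{g}_{nc}$'' to ``$[\mathfrak{g}_{nc},\mathfrak{r}]=0$'' is never established (and note that since $[\mathfrak{g}_{nc},\mathfrak{r}]\subset\mathfrak{n}$, such a pairing condition can be automatic and hence contentless for suitable decompositions), the reduction to nilpotent radical is not justified, and the closing speculation about unipotent one-parameter subgroups with bounded orbits is not an argument that can be completed as written. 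To close the gap you should consult \cite{GorNik2018} directly.
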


We need also the following result by C.~Gordon (see also \cite{Nik2013} for more simple proof).

\begin{theorem}[Theorem 5.1 in \cite{Gor96}]\label{gordonnonpos}
Every Riemannian GO-manifold of nonpositive Ricci curvature is symmetric.
\end{theorem}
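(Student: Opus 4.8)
The plan is to exploit the geodesic orbit condition to split $M$ into a Riemannian product of two model pieces — a homogeneous space of a semisimple group of noncompact type, and a homogeneous space whose transitive group has compact Levi part — and then to use $\Ric\le 0$ to show that the first piece is a symmetric space of noncompact type and the second is a flat Euclidean space.

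\emph{Step 1 (reduction and splitting).} I would first pass to $G=I_0(M)$, so that $H$ is compact and $(\mathfrak g,\mathfrak h)$ is reduced. By Proposition~\ref{gornik}, a Levi factor of $G$ has the form $\mathfrak s=\mathfrak s_{nc}\oplus\mathfrak s_c$ with $\mathfrak s_{nc}$ (the noncompact part) an ideal of $\mathfrak g$ commuting with the radical $\mathfrak r$; since $\mathfrak s_{nc}$ and $\mathfrak s_c$ are sums of distinct simple ideals of $\mathfrak s$, they commute, so $\mathfrak g=\mathfrak s_{nc}\oplus\mathfrak g'$ with $\mathfrak g'=\mathfrak s_c+\mathfrak r$ a direct sum of ideals. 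As the metric is $\Ad(H)$-invariant and the two ideals induce orthogonal, $\Ad(H)$-invariant, complementary subrepresentations of the reductive complement $\mathfrak p$, $M$ is, up to a finite Riemannian covering, a Riemannian product $M=M_{nc}\times M'$ with $M_{nc}=S_{nc}/(S_{nc}\cap H)$ and $M'=G'/(G'\cap H)$. Each factor is a totally geodesic homogeneous submanifold of $M$ whose isometry group contains the corresponding factor group, hence is again a geodesic orbit space, and inherits $\Ric\le 0$. A product is symmetric iff both factors are, so it suffices to treat $M_{nc}$ and $M'$ separately.

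\emph{Step 2 (the factor $M'$ is flat).} Passing to $I_0(M')$ and applying Proposition~\ref{gornik} once more, $M'$ carries no nontrivial semisimple group of noncompact isometries (such a group would produce a noncompact symmetric de~Rham factor of $M'$, hence of $M$, enlarging $\mathfrak s_{nc}$ — a contradiction). Thus $\operatorname{Lev}(I_0(M'))$ is compact, and after a standard reduction one may assume the relevant radical is nilpotent and apply Theorem~\ref{th_rad} to obtain its structure. Then $\Ric\le 0$ should force flatness: the compact semisimple part must act with flat orbits, since otherwise a maximal torus of it would yield, via a Bochner-type argument on its compact orbit together with homogeneity, a direction of strictly positive Ricci curvature; and a non-flat homogeneous space of the ``nilradical'' type described by Theorem~\ref{th_rad} has Ricci curvature of both signs in the sense of Milnor~\cite{Milnor1976}. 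Hence the effective part of $\mathfrak g'$ is abelian and $M'$ is a Euclidean space.

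\emph{Step 3 (the factor $M_{nc}$, and the main obstacle).} Here $S_{nc}$ is semisimple of noncompact type and acts transitively and effectively on $M_{nc}$ with $\Ric\le 0$. The crucial claim is that $K_{nc}:=S_{nc}\cap H$ is a \emph{maximal} compact subgroup of $S_{nc}$. Granting it, $M_{nc}$ is diffeomorphic to a symmetric space of noncompact type, and since the $\Ad(K_{nc})$-invariant complement $\mathfrak p_{nc}$ of $\mathfrak k_{nc}$ in $\mathfrak s_{nc}$ is the direct sum of the (irreducible) isotropy representations of the simple symmetric factors, every invariant metric on $M_{nc}$ is, factor by factor, a scalar multiple of the metric induced by the Killing form; thus $M_{nc}$ is itself a Riemannian symmetric space of noncompact type, and with Step~2 the proof is complete. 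The hard part is the maximality of $K_{nc}$. If $\widehat K\supsetneq K_{nc}$ were a maximal compact subgroup containing $K_{nc}$, I would choose $\mathfrak p_{nc}$ to contain a nonzero $\Ad(K_{nc})$-invariant complement $\mathfrak m_0$ of $\mathfrak k_{nc}$ in the Lie algebra $\widehat{\mathfrak k}$ of $\widehat K$; for each $X\in\mathfrak m_0$ the geodesic orbit condition (Theorem~\ref{th_main}) gives $Y\in\mathfrak k_{nc}$ with $X+Y$ a geodesic vector, and since $X+Y\in\widehat{\mathfrak k}$ the geodesic $\exp(t(X+Y))(o)$ stays in the compact orbit $\widehat K\cdot o$ and has initial velocity $X$. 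By homogeneity this exhibits $\widehat K\cdot o$ as a compact totally geodesic submanifold of positive dimension in the non-flat (an effective action of a semisimple group of noncompact type by isometries of a flat manifold is impossible) homogeneous manifold $M_{nc}$ with $\Ric\le 0$. Deriving a contradiction from the existence of such a submanifold — i.e.\ showing that a non-flat homogeneous Riemannian manifold of nonpositive Ricci curvature admits no compact totally geodesic submanifold of positive dimension, or replacing this by a direct Ricci estimate along the torus orbits inside $S_{nc}$ — is where the real work lies; the product reduction of Step~1 and the compact-part argument of Step~2 are by comparison routine.
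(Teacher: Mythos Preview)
The paper does not contain a proof of this theorem: it is quoted as Theorem~5.1 of Gordon~\cite{Gor96}, with a pointer to a shorter argument in~\cite{Nik2013}, and is then used as a black box. So there is no ``paper's own proof'' to compare against.

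As for your proposal on its own merits, two points deserve attention. First, there is a logical risk in invoking Proposition~\ref{gornik}: that result is from~\cite{GorNik2018}, which postdates~\cite{Gor96}, and you should verify that its proof does not rely on the very theorem you are trying to establish (or on consequences of it). Second, and more substantively, your argument is openly incomplete. In Step~1 the orthogonality of the two ideal summands in $\mathfrak p$ is asserted but not justified; $\Ad(H)$-invariance alone does not force it unless the two isotropy submodules have no common irreducible constituent. Step~2 is a sketch: the ``Bochner-type'' claim that a compact semisimple factor would produce positive Ricci curvature is not a standard statement in this noncompact setting, and the appeal to Milnor for the nilradical-type piece covers only nilpotent Lie groups, not the more general structure coming out of Theorem~\ref{th_rad}. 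In Step~3 you correctly produce, via the GO condition, a compact totally geodesic orbit $\widehat K\cdot o$ when $K_{nc}$ fails to be maximal compact, but you then explicitly acknowledge that ruling out such a submanifold under $\Ric\le 0$ is ``where the real work lies'' and leave it undone. That is precisely the heart of the matter; the known proofs (Gordon's original, and the shorter one in~\cite{Nik2013} via Killing fields of constant length) supply exactly this missing mechanism, and without it your outline does not constitute a proof.
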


The homogeneous space $G/H$ of an arbitrary connected Lie group $G$ with compact
stabilizer $H$ will be called a {\it rigid GO-space}, if a homogeneous Riemannian manifold
$(G/H,\mu)$ is isometric to some GO-space for every $G$-invariant Riemannian metric $\mu$
on $G/H$. It is equivalent to the condition that $(G/H,\mu)$ is GO-manifold for every $G$-invariant Riemannian metric $\mu$ on $G/H.$
\medskip

It is well known that any direct product of isotropy irreducible spaces is a rigid GO-space.
Another important subclass of rigid GO-spaces is formed by all {\it weakly-symmetric spaces} (see details e.g. in \cite{AV}, \cite{Selberg}, \cite{Zil1996}).
In particular, any homogeneous space $SO(2n+1)/U(n),$ $n\geq 3,$ or $Sp(n)/U(1)\cdot Sp(n-1)=\mathbb{C}P^{2n-1}$, $n\geq 2$, is a rigid GO-space
(see details e.g. in \cite{AlNik}), but it is not a product of isotropy irreducible spaces and thus {\it it is not a homogeneous space
with integrable invariant distributions}.

It is interesting that $SO(2n+1)/U(n)$ ($Sp(n)/U(1)\cdot Sp(n-1))$),  are diffeomorphic to the symmetric spaces $SO(2(n+1))/U(n+1)$,
the spaces of orthogonal complex structures on $\mathbb{R}^{2(n+1)}$, (resp., $SU(2n)/S(U(1)\times U(2n-1))$), \cite{BerNik2020} and references there.
\smallskip

\begin{problem}[Problem 10 in \cite{BerGor2014}]
\label{conj}
Determine whether any  homogeneous
space $G/H$ of a connected Lie group $G$ with compact stabilizer subgroup $H$
and integrable invariant distributions is a rigid GO-space.
\end{problem}

The problem of classifying homogeneous spaces with integrable invariant distributions posed in \cite{Ber1989}, has not been solved.
Thus, Problem \ref{conj} is an attempt to generalize known results. Note also that the problem of classification of all rigid GO-spaces is very natural
(see e.g. \cite[Question 4]{Nik2017} and  \cite[Question 5.12.25]{BerNik2020}).

We have the following corollary from Theorem \ref{th_comprodirr}.

\begin{corollary}
Any compact simply connected homogeneous space  with integrable invariant distributions is a rigid GO-space.
\end{corollary}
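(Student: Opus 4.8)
The plan is to combine Theorem \ref{th_comprodirr} with the well-known fact, recalled in the paragraph preceding Problem \ref{conj}, that any direct product of isotropy irreducible spaces is a rigid GO-space. Let $M=G/H$ be a compact simply connected homogeneous space with integrable invariant distributions. Since $M$ is simply connected, the stabilizer $H$ is connected, so the hypotheses of Corollary \ref{cor1} and of Theorem \ref{th_comprodirr} apply. First I would invoke Theorem \ref{th_comprodirr} to write $M$ (up to the natural isomorphism of homogeneous spaces) as a direct product
\[
M \;=\; M_1\times M_2\times\cdots\times M_k,
\]
where each $M_i=G_i/H_i$ is a simply connected compact isotropy irreducible homogeneous space.

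Next I would verify that this product decomposition interacts correctly with invariant metrics. The key point is that on a direct product $M=\prod_i M_i$ of homogeneous spaces $G_i/H_i$, realized with the transitive group $G=\prod_i G_i$ and stabilizer $H=\prod_i H_i$, every $G$-invariant Riemannian metric $\mu$ splits as an orthogonal product $\mu=\bigoplus_i \mu_i$ of $G_i$-invariant metrics $\mu_i$ on $M_i$: indeed, the reductive complement decomposes as $\mathfrak{p}=\bigoplus_i\mathfrak{p}_i$ with each $\mathfrak{p}_i$ an $\Ad(H)$-submodule, the submodules $\mathfrak{p}_i$ are pairwise non-isomorphic in the relevant sense or at least are mutually orthogonal because $[\mathfrak{p}_i,\mathfrak{p}_j]=0$ for $i\neq j$ forces any invariant bilinear form to be block diagonal after averaging, and on each isotropy irreducible factor $\mathfrak{p}_i$ the invariant inner product is unique up to scaling. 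Hence $\mu$ is determined by scalars $c_i>0$ and $\mu=\bigoplus_i c_i\,(\cdot,\cdot)_i$ where $(\cdot,\cdot)_i$ is the (essentially unique) invariant metric on $M_i$.

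Then I would conclude by the behavior of the GO property under Riemannian products: $(M,\mu)=\prod_i(M_i,\mu_i)$ is a GO-manifold as soon as each factor $(M_i,\mu_i)$ is a GO-manifold, since a geodesic in a product is a product of geodesics of the factors, and a product of one-parameter isometry groups is a one-parameter isometry group of the product. Each $(M_i,\mu_i)$ is isotropy irreducible with its canonical metric, hence is a GO-space (this is the classical fact cited above; it also follows directly from the criterion in Theorem \ref{th_main}, taking $Y=0$, because isotropy irreducibility makes the invariant metric proportional to the one induced by a bi-invariant form). Therefore every $G$-invariant metric $\mu$ on $M$ makes $(M,\mu)$ a GO-manifold, i.e. $M$ is a rigid GO-space. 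Since the original isometry group of any invariant metric on $M$ may be larger than $G$, we use here the definition's equivalent form: it suffices that $(M,\mu)$ be a GO-manifold for the given transitive action, which is exactly what we have shown.

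The step I expect to require the most care is the middle one: justifying that an arbitrary $G$-invariant metric on the product is actually a product metric. One must be slightly careful about how $M$ is presented as a homogeneous space — Theorem \ref{th_comprodirr} gives an isomorphism of homogeneous spaces, so the transitive group can be taken to be the product $\prod_i G_i$ acting in the obvious way, and then the block-diagonality of invariant metrics follows from $[\mathfrak{p}_i,\mathfrak{p}_j]\subset\mathfrak{p}_i\cap\mathfrak{p}_j=0$ together with $\Ad(H)$-invariance and the averaging argument. If instead one wished to allow the full isometry group, one would need the (true but more delicate) fact that the de Rham decomposition is compatible with the algebraic product structure; for the purposes of rigidity as defined here, the product-group presentation suffices and this subtlety can be avoided.
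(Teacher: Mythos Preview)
Your proposal is correct and follows exactly the paper's route: the paper gives no proof of the corollary, simply deriving it from Theorem~\ref{th_comprodirr} together with the fact, stated just before Problem~\ref{conj}, that any direct product of isotropy irreducible spaces is a rigid GO-space. Your detailed verification is the expected one; the only imprecision is in the justification of block-diagonality of an invariant metric on the product---the relation $[\mathfrak{p}_i,\mathfrak{p}_j]=0$ is not what forces it, rather Schur's lemma does, since $H_i$ acts nontrivially and irreducibly on $\mathfrak{p}_i$ but trivially on $\mathfrak{p}_j$ for $j\neq i$, making the $H$-modules $\mathfrak{p}_i$ pairwise inequivalent.
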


The statements before Proposition \ref{iigo} and Definition \ref{ex6} imply

\begin{theorem}[\cite{Ber1989}]
Any Lie group $G$ {\rm(}with $H=\{e\}${\rm)} with integrable
{\rm(}left-{\rm)}invariant distributions is a rigid GO-space.
\end{theorem}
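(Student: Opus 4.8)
The plan is to reduce the claim to the classical fact, already cited in the excerpt, that every left-invariant Riemannian metric on such a Lie group $G$ is flat (if $G$ is abelian) or has constant negative sectional curvature (if $G\cong G_n$, $n\geq 2$), and then to invoke the well-known fact that Riemannian symmetric spaces are GO-manifolds. Since the notion of ``rigid GO-space'' for $G/H$ with $H=\{e\}$ means exactly that $(G,\mu)$ is a GO-manifold for every left-invariant Riemannian metric $\mu$ on $G$, it suffices to show that every such $(G,\mu)$ is (isometric to) a Riemannian symmetric space.

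First I would recall, via Corollary \ref{cor1} applied to the pair $(\mathfrak g,\mathfrak h=0)$, that a Lie group $G$ has integrable left-invariant distributions if and only if every vector subspace of $\mathfrak g$ is a subalgebra; by the classification of Milnor recalled just after Corollary \ref{cor1}, this forces $\mathfrak g$ to be either abelian or isomorphic to $\mathfrak g_n$ for some $n\geq 2$, the Lie algebra of the group $G_n$ of parallel translations and homotheties of $\mathbb R^{n-1}$. In the abelian case $G$ is (a quotient of) $\mathbb R^{\dim G}$ and every left-invariant metric is flat, hence $(G,\mu)$ is a flat Euclidean space, which is symmetric and therefore a GO-manifold. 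In the case $\mathfrak g\cong\mathfrak g_n$, the group $G_n$ is simply connected and, by Milnor's Special Example~1.7, every left-invariant metric on $G_n$ has constant negative sectional curvature; being complete, simply connected and of constant negative curvature, $(G_n,\mu)$ is isometric to real hyperbolic space, a rank-one symmetric space.

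Next I would observe that in either case $(G,\mu)$ is isometric to a Riemannian symmetric space, and that every Riemannian symmetric space is a GO-manifold: along any geodesic, the one-parameter group of transvections (compositions of two central symmetries at points of the geodesic, equidistant from a fixed point) acts by isometries and translates the geodesic, exhibiting it as an orbit of a one-parameter isometry subgroup. Hence every left-invariant Riemannian metric on $G$ yields a GO-manifold, so $G$ is a rigid GO-space, which is the assertion.

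The only point requiring a little care — and hence the main obstacle — is the logical direction of the reduction: rigidity is phrased in terms of the \emph{abstract} manifold $(G/H,\mu)$ being isometric to \emph{some} GO-space, not in terms of the original left-translation action being GO. This is harmless here, since the definition of rigid GO-space in the excerpt explicitly records the equivalence with ``$(G/H,\mu)$ is a GO-manifold for every invariant $\mu$,'' and being a GO-manifold is an intrinsic metric property; so once $(G,\mu)$ is identified with a symmetric space it is automatically a GO-manifold. The remaining ingredients — Milnor's classification and the curvature statements — are quoted verbatim in the text preceding Proposition \ref{iigo}, so no further work is needed.
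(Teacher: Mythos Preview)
Your argument is correct and follows exactly the route the paper indicates: the theorem is stated there as an immediate consequence of ``the statements before Proposition~\ref{iigo} and Definition~\ref{ex6}'', namely Milnor's classification (abelian or $\mathfrak{g}_n$) together with the fact that symmetric spaces are GO-manifolds. The only cosmetic slip is calling $(G,\mu)$ ``a flat Euclidean space'' in the abelian case---$G$ may have torus factors---but such a flat $(G,\mu)$ is still symmetric, so the conclusion stands.
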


\section{Some examples and counterexamples}

Here we consider some examples of homogeneous spaces with integrable invariant distributions and strong subalgebras.

\begin{example}[\cite{BerGor2014}]\label{ex_0}
\label{ex_1}
Let  $G= H\ltimes \mathbb{R}^n$ be a semidirect product of a nontrivial compact Lie group $H$ and $(\mathbb{R}^n,+)$ (corresponding to the embedding
$H\subset  \operatorname{GL}(n,\mathbb{R}))$. Then the subalgebra $\mathfrak{h}$ is a strong subalgebra in the Lie algebra $\mathfrak{g}$ of the Lie group
$G$, $\mathbb{R}^n$ is the nilradiacal of $G.$ If $H$ is commutative, then $G$ is solvable Lie group.
\end{example}

This is Example 2 on p. 320 in \cite{BerGor2014} (Example 1 in \cite{BerGor2014} when $H=T$ is a torus).

It is easy to see that {\it the space $G/H$ is a rigid GO-space}, because {\it every invariant Riemannian metric $\mu$ on $G/H=\mathbb{R}^n$ is Euclidean,
and so is geodesic orbit}.

\begin{example}
\label{ex_2}
Let $\mathfrak{s}$ be a semisimple Lie algebra of noncompact type, $\mathfrak{h}$ its maximal compact subalgebra.
Suppose that there is a representation $\psi: \mathfrak{s} \mapsto \operatorname{\mathfrak{gl}(n,\mathbb{R})}$ such that any subspace in $\mathbb{R}^n$ invariant under
the action of $\psi(\mathfrak{h})$ is also invariant under $\psi(\mathfrak{s})$. Then the Lie algebra $\mathfrak{h}$ is strong in the semidirect sum
$\mathfrak{g}=\mathfrak{s}\ltimes\mathbb{R}^n $ (here $\mathbb{R}^n$ is the nilradical of $\mathfrak{g}$ and it is abelian),
which is defined by the representation $\psi$.

Indeed, let $\mathfrak{p}$ be a complement to
$\mathfrak{h}$ in $\mathfrak{s}$ with respect to the Killing form of $\mathfrak{s}$. It is well known that $[\mathfrak{h},\mathfrak{p}]\subset \mathfrak{p}$ and
$[\mathfrak{p},\mathfrak{p}]=\mathfrak{h}$. Therefore, $\mathfrak{p}\oplus\mathbb{R}^n$ is a complement to $\mathfrak{h}$ in $\mathfrak{g}$ and
any $\mathfrak{h}$-invariant subspace in $\mathfrak{p}\oplus\mathbb{R}^n$ has a form $\mathfrak{q}_1\oplus \mathfrak{q}_2$, where $\mathfrak{q}_1 \subset \mathfrak{p}$ and
$\mathfrak{q}_2\subset \mathbb{R}^n$  are respectively $\ad(\mathfrak{h})$-invariant and $\psi(\mathfrak{h})$-invariant. We know that $\mathfrak{h}\oplus \mathfrak{q}_1$ is a subalgebra in $\mathfrak{s}$ and $\mathfrak{q}_2$ is $\ad(\mathfrak{s})$-invariant.
Then $\mathfrak{h}\oplus\mathfrak{q}_1\oplus \mathfrak{q}_2$ is a subalgebra in $\mathfrak{g}$.
\end{example}

In fact, this construction were considered in \cite[\S 3]{Gor2008}. Now we consider a more general construction.

\begin{example}
\label{ex_3}
Suppose that a reductive Lie algebra $\mathfrak{g}$ has the following decomposition in direct sum of Lie algebras
$$
\mathfrak{g}=\mathfrak{g}_0\oplus \mathfrak{g}_1 \oplus \cdots \oplus \mathfrak{g}_m \oplus \mathfrak{g}_{m+1} \oplus \cdots \oplus \mathfrak{g}_{m+t},
$$
where $m, t \in \mathbb{N}$; $\mathfrak{g}_0$ is a compact Lie algebra; all $\mathfrak{g}_i$, $1\leq i \leq m$, are compact simple Lie algebras;
all $\mathfrak{g}_i$, $m+1\leq i \leq m+t$, are noncompact simple Lie algebras. We consider also its Lie subalgebra
$$
\mathfrak{h}=\mathfrak{g}_0\oplus \mathfrak{h}_1 \oplus \cdots \oplus \mathfrak{h}_m \oplus \mathfrak{h}_{m+1} \oplus \cdots \oplus \mathfrak{h}_{m+t},
$$
where $\mathfrak{h}_i \subset \mathfrak{g}_i$, $1\leq i \leq m+t$, and $(\mathfrak{g}_i,\mathfrak{h}_i)$ are compact irreducible pairs for $1\leq i \leq m$,
while $\mathfrak{h}_i$ is a maximal compact subalgebra in $\mathfrak{g}_i$ for $m+1\leq i \leq m+t$.
Suppose also that there is a representation $\psi: \mathfrak{g} \mapsto \operatorname{\mathfrak{gl}(n,\mathbb{R})}$ such that any subspace in $\mathbb{R}^n$ invariant under
the action of $\psi(\mathfrak{h})$ is also invariant under $\psi(\mathfrak{g})$. Then the Lie algebra $\mathfrak{h}$ is strong in the semidirect sum
$\mathfrak{g}\ltimes\mathbb{R}^n$, which is defined by the representation $\psi$.

Indeed, let $\mathfrak{p}$ be a complement to
$\mathfrak{h}$ in $\mathfrak{g}$ with respect to the Killing form of $\mathfrak{g}$.
It is well known that $[\mathfrak{h},\mathfrak{p}]\subset\mathfrak{p}$. Therefore
$\mathfrak{p}\oplus \mathbb{R}^n$ is a complement to $\mathfrak{h}$ in
$\mathfrak{g} \ltimes\mathbb{R}^n$ and any $\mathfrak{h}$-invariant subspace in
$\mathfrak{p} \oplus\mathbb{R}^n$ has a form $\mathfrak{q}_1\oplus \mathfrak{q}_2$, where
$\mathfrak{q}_1\subset \mathfrak{p}$ and $\mathfrak{q}_2 \subset\mathbb{R}^n$ are respectively $\ad(\mathfrak{h})$-invariant and $\psi(\mathfrak{h})$-invariant.
We know that $\mathfrak{h}\oplus \mathfrak{q}_1$ is a subalgebra in $\mathfrak{g}$ and $\mathfrak{q}_2$ is $\ad(\mathfrak{g})$-invariant.
Therefore, $\mathfrak{h}\oplus\mathfrak{q}_1\oplus \mathfrak{q}_2$ is a subalgebra in
$\mathfrak{g}\ltimes\mathbb{R}^n.$
\end{example}

\begin{remark}
It is easy to see that the pair $(\mathfrak{g}\ltimes\mathbb{R}^n, \mathfrak{h})$ is effective if and only if $\psi(X)$ acts non-trivially on $\mathbb{R}^n$
for all non-zero $X \in \mathfrak{g}_0$.
\end{remark}

\begin{remark}
It is possible that Example \ref{ex_3} gives the description of all possible ``minimal'' strong subalgebras.
\end{remark}

\begin{example}
\label{ex_4}
Let us consider a partial case of Example \ref{ex_2}. We suppose in addition that
$\mathfrak{s}$ is a simple noncompact Lie algebra and $\psi(\mathfrak{s})$ acts irreducibly on $\mathbb{R}^n$.
We take $\mathfrak{q}_1= \mathfrak{p}$ and  $\mathfrak{q}_2=  \mathbb{R}^n$.
Since $\ad(\mathfrak{h})$ acts irreducibly both on $\mathfrak{q}_1$ and $\mathfrak{q}_2$, the space of  $\ad(\mathfrak{h})$-invariant inner products
on $\mathfrak{q}:=\mathfrak{q}_1\oplus\mathfrak{q}_2$ is two-parametric.
We fix an $\ad(\mathfrak{h})$-invariant inner product $(\cdot,\cdot)=(\cdot,\cdot)_1+(\cdot,\cdot)_2$, where
$(\cdot,\cdot)_1$ and $(\cdot,\cdot)_2$ are inner product on $\mathfrak{q}_1$ and $\mathfrak{q}_2$ respectively.
\end{example}

\begin{remark}
In Example 3 on p. 320 in  \cite{BerGor2014} is considered the Lie group $G=S\ltimes\mathbb{R}^n$, where $S$ is a simple noncompact Lie group. Its aim was to show that in the corresponding Lie algebra
$\mathfrak{g},$ a maximal compact subalgebra $\mathfrak{h}=\mathfrak{k}$ of the Lie algebra $\mathfrak{g}$ will be strong by far not always.
\end{remark}

\begin{theorem}
\label{ce1}
Let $G/H$ be a homogeneous space with connected Lie groups $G$ and $H$ such that $G=S\ltimes\mathbb{R}^n$ and Lie algebras
$\mathfrak{g}=\mathfrak{s}\ltimes\mathbb{R}^n,$ $\mathfrak{h}$ as in Example \ref{ex_4}. Then $G/H$ is a homogeneous space with integrable invariant distributions but $G/H$ admits no invariant geodesic orbit Riemannian metric.
\end{theorem}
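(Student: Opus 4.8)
The plan is to treat the two assertions separately. The first one is already contained in Examples~\ref{ex_2} and \ref{ex_4}: under the stated hypotheses $\mathfrak{h}$ is a strong subalgebra of $\mathfrak{g}=\mathfrak{s}\ltimes\mathbb{R}^n$, since every $\psi(\mathfrak{h})$-invariant subspace of $\mathbb{R}^n$ is $\{0\}$ or $\mathbb{R}^n$ and hence $\psi(\mathfrak{s})$-invariant, while the connected subgroup $H$ is compact; moreover $(\mathfrak{g},\mathfrak{h})$ is effective, because an ideal of $\mathfrak{g}$ contained in $\mathfrak{h}\subset\mathfrak{s}$ would commute with $\mathbb{R}^n$ and therefore vanish, $\psi|_{\mathfrak{s}}$ being faithful. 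By Corollary~\ref{cor1} this means that $G/H$ has integrable invariant distributions.

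For the second assertion I would argue by contradiction. Suppose that for some $G$-invariant Riemannian metric $\mu$ the space $(G/H,\mu)$ is a geodesic orbit manifold. Following the framework of \cite{Gord1981}, put $A:=I_{0}(G/H,\mu)$; then $G$ may be identified with a connected transitive subgroup of $A$ (the $G$-action being effective by the above), one has $A=G\cdot L$ with $L$ the compact isotropy subgroup of $A$ at $o$, and $M=G/H=A/L$. Since every one-parameter subgroup of $I(G/H,\mu)$ is connected and thus lies in $A$, the homogeneous space $(A/L,\mu)$ is a $GO$-space with respect to $A$ in the sense of Definition~\ref{ex6}. The radical of $G$ is the abelian nilradical $\mathbb{R}^n$, in particular it is nilpotent, so the hypotheses of Theorems~\ref{th_nc} and \ref{th_rad} are satisfied for $A$, $G$, $L$.

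Now I would combine three ingredients. First, by Theorem~\ref{th_nc} the noncompact part of a Levi factor of $A$ coincides with that of $G$; since $\mathfrak{s}$ is simple and noncompact this is $S$ itself, so $\operatorname{Lev}(A)_{nc}=S$. Second, by Proposition~\ref{gornik} applied to the $GO$-space $A/L$, the subgroup $\operatorname{Lev}(A)_{nc}=S$ commutes with $\operatorname{Rad}(A)$, i.e.\ $[\mathfrak{s},\operatorname{Rad}(\mathfrak{a})]=0$, where $\mathfrak{a}$ is the Lie algebra of $A$. Third, by Theorem~\ref{th_rad}(c) we have $[\mathfrak{s},\operatorname{Rad}(\mathfrak{a})]=[\mathfrak{s},\operatorname{Rad}(\mathfrak{g})]=[\mathfrak{s},\mathbb{R}^n]$, and because $\psi$ is a nontrivial representation with $\psi(\mathfrak{s})$ irreducible, $[\mathfrak{s},\mathbb{R}^n]$ is a nonzero $\mathfrak{s}$-submodule of $\mathbb{R}^n$ and hence equals $\mathbb{R}^n\neq\{0\}$. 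This contradiction shows that no $G$-invariant Riemannian metric on $G/H$ is geodesic orbit.

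The main obstacle I expect is the bookkeeping in the second step: one must pass correctly from $G$ to the full connected isometry group $A=I_{0}(G/H,\mu)$, check that $(A/L,\mu)$ is genuinely a $GO$-space over $A$, and choose the Levi decompositions of $\mathfrak{g}$ and $\mathfrak{a}$ compatibly, so that Gordon's Theorems~\ref{th_nc} and \ref{th_rad} together with Proposition~\ref{gornik} may be quoted unambiguously; the remaining points (effectiveness of $(\mathfrak{g},\mathfrak{h})$, nilpotency of $\operatorname{Rad}(G)$, and the identity $[\mathfrak{s},\mathbb{R}^n]=\mathbb{R}^n$) are routine. A direct computational route through Theorem~\ref{th_main} would be far heavier, and Theorem~\ref{gordonnonpos} alone would only exclude invariant metrics of nonpositive Ricci curvature, whereas the structural argument above rules out all invariant metrics at once.
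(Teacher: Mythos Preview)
Your proposal is correct and follows essentially the same route as the paper: pass to the connected isometry group $A=GL$, use Theorem~\ref{th_nc} to identify $\operatorname{Lev}(A)_{nc}=S$, invoke Proposition~\ref{gornik} to force $[\mathfrak{s},\operatorname{Rad}(\mathfrak{a})]=0$, and obtain a contradiction with the irreducible action of $\mathfrak{s}$ on $\mathbb{R}^n$. The only cosmetic difference is that the paper uses parts (a)--(b) of Theorem~\ref{th_rad} (and $[\mathfrak{g},\mathfrak{n}]=\mathfrak{n}$) to conclude $\mathbb{R}^n=\mathfrak{n}'_2\subset\operatorname{Rad}(\mathfrak{a})$ and hence $[\mathfrak{s},\mathbb{R}^n]=0$, whereas you quote part~(c) directly to get $[\mathfrak{s},\operatorname{Rad}(\mathfrak{a})]=[\mathfrak{s},\mathbb{R}^n]$; both yield the same contradiction.
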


\begin{proof}
The Lie group $G$ has the commutative (hence, nilpotent) radical $\mathfrak{n}=\mathbb{R}^n$.
The conditions imply that $[\mathfrak{g},\mathfrak{n}]=\mathfrak{n}$ and $[\mathfrak{g},\mathfrak{g}]=\mathfrak{g}$.

Assume that $G/H$ admits an invariant geodesic orbit Riemannian metric $\mu$. Then there exists a connected Lie group $A=GL$ with a compact subgroup
$L$ such that $(G/H,\mu)$ is isometric to the geodesic orbit Riemannian space $(A/(HL),\nu)$.
Then $A_{nc}=G_{nc}=S$ by Theorem \ref{th_nc}.

The statements in the second line of the proof and Theorem \ref{th_rad} imply  that $\mathfrak{n}=\mathfrak{n}_2$, $\mathfrak{n}_1=\{0\}$,
$\mathfrak{n}=\mathfrak{n}'_2$ and $\mathfrak{n}\subset \mathfrak{m}$.

Now by Proposition \ref{gornik}, $A_{nc}=S$ commutes with $\operatorname{Rad}(A)\supset \operatorname{Rad}(G)=\mathbb{R}^n$.
This contradicts to the fact that $\psi(\mathfrak{s})$ acts irreducibly on $\mathfrak{n}$.
\end{proof}

\begin{corollary}
The statements of Theorem \ref{ce1} are true for the homogeneous space $G/H,$ where $G=GL(n,\mathbb{R})_0\ltimes \mathbb{R}^n,$ $H=SO(n)$ and $n\geq 3.$
\end{corollary}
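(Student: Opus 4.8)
The plan is to verify directly that this $G/H$ --- which is a close analogue of the spaces in Theorem \ref{ce1} rather than a literal instance of it (here $\mathfrak{g}=\mathfrak{gl}(n,\mathbb{R})\ltimes\mathbb{R}^n$ carries the extra central line $\mathbb{R}\,\Id$, and $\operatorname{Rad}(G)=(\mathbb{R}_{>0}\Id)\ltimes\mathbb{R}^n$ is not nilpotent) --- has both properties claimed in the theorem: integrable invariant distributions, and no invariant GO Riemannian metric.

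For the first property I would apply Corollary \ref{cor1}, which is available since $H=SO(n)$ is connected. Take the $\Ad(SO(n))$-invariant complement $\mathfrak{p}=\mathbb{R}\,\Id\oplus\mathfrak{p}_0\oplus\mathbb{R}^n$ of $\mathfrak{h}=\mathfrak{so}(n)$ in $\mathfrak{g}$, where $\mathfrak{p}_0$ is the space of traceless symmetric $n\times n$ matrices, so that $\mathfrak{h}\oplus\mathfrak{p}_0=\mathfrak{sl}(n,\mathbb{R})$ and $\mathfrak{h}\oplus\mathbb{R}\,\Id\oplus\mathfrak{p}_0=\mathfrak{gl}(n,\mathbb{R})$. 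For $n\geq 3$ the three summands of $\mathfrak{p}$ are irreducible $\Ad(SO(n))$-modules of pairwise distinct dimensions $1$, $\tfrac{n(n+1)}{2}-1$, $n$, hence pairwise inequivalent; since an $\Ad(SO(n))$-invariant subspace $\mathfrak{q}\supseteq\mathfrak{h}$ always splits as $\mathfrak{q}=\mathfrak{h}\oplus(\mathfrak{q}\cap\mathfrak{p})$, the only candidates are the eight subspaces $\mathfrak{h}\oplus W$ with $W$ a partial sum of $\{\mathbb{R}\,\Id,\mathfrak{p}_0,\mathbb{R}^n\}$. Each of these is a subalgebra --- a routine check, using that $\mathbb{R}^n$ is an abelian ideal of $\mathfrak{g}$, that $\mathbb{R}\,\Id$ commutes with $\mathfrak{gl}(n,\mathbb{R})$ and normalizes $\mathbb{R}^n$, and that $\mathfrak{so}(n),\ \mathfrak{sl}(n,\mathbb{R}),\ \mathfrak{gl}(n,\mathbb{R}),\ \mathfrak{so}(n)\ltimes\mathbb{R}^n$ together with their obvious enlargements exhaust the list. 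By Corollary \ref{cor1}, $G/H$ has integrable invariant distributions, and $\mathfrak{h}$ is a strong subalgebra (as $SO(n)$ is compact).

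For the second property I would argue by contradiction, following the scheme of the proof of Theorem \ref{ce1} but replacing the appeal to Theorem \ref{th_rad} (unavailable here, the radical not being nilpotent) by a direct normality argument. Suppose a $G$-invariant metric $\mu$ makes $G/H$ a GO-manifold, and set $A=I(G/H,\mu)^0$. Then $A$ acts transitively with compact isotropy $L$, so $A=GL$, $(G/H,\mu)$ is isometric to $(A/L,\mu)$, and the latter is a GO-space for the connected group $A$ (one-parameter subgroups of $I(G/H,\mu)$ lie in $A$). By Theorem \ref{th_nc}, $A_{nc}=G_{nc}$; and a Levi factor of $\mathfrak{g}=\mathfrak{gl}(n,\mathbb{R})\ltimes\mathbb{R}^n$ is $\mathfrak{sl}(n,\mathbb{R})$, which has no compact ideal, so $G_{nc}=SL(n,\mathbb{R})$. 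Proposition \ref{gornik}, applied to the GO-space $A/L$, now gives that $A_{nc}=SL(n,\mathbb{R})$ is normal in $A$; since $SL(n,\mathbb{R})\subseteq G\subseteq A$, it is then normal in $G$, i.e. $\mathfrak{sl}(n,\mathbb{R})$ is an ideal of $\mathfrak{g}$. This is impossible: for suitable $X\in\mathfrak{sl}(n,\mathbb{R})$ and $v\in\mathbb{R}^n$ one has $[X,v]=Xv\in\mathbb{R}^n\setminus\{0\}$ while $\mathbb{R}^n\cap\mathfrak{sl}(n,\mathbb{R})=0$, so $[\mathfrak{sl}(n,\mathbb{R}),\mathbb{R}^n]\not\subseteq\mathfrak{sl}(n,\mathbb{R})$. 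Hence no invariant GO metric exists.

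The step I expect to be the crux is exactly this last reduction. Because $\operatorname{Rad}(G)$ is not nilpotent, one cannot invoke Theorem \ref{th_rad} to place $\mathbb{R}^n$ inside $\operatorname{Rad}(A)$, as is done in the proof of Theorem \ref{ce1}; instead I use Proposition \ref{gornik} in its ``normal subgroup'' form, which transfers from $A$ to the subgroup $G$ for free and makes the obstruction visible inside the explicitly known Lie algebra $\mathfrak{g}$. The remaining points needing care are minor: checking that $(A/L,\mu)$, with $A=I(G/H,\mu)^0$, really is an effective homogeneous GO-space of a connected group $A=GL$ with $L$ compact (so that Theorem \ref{th_nc} and Proposition \ref{gornik} apply), and the bookkeeping for the eight candidate subspaces in the first part.
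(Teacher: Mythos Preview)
Your argument is correct. The paper states this corollary with no separate proof, evidently regarding it as an immediate consequence of Theorem~\ref{ce1}; you have rightly noticed that it is not a literal instance, since here $\operatorname{Rad}(G)=\mathbb{R}_{>0}\Id\ltimes\mathbb{R}^n$ is non-nilpotent and Theorem~\ref{th_rad} --- which the paper's proof of Theorem~\ref{ce1} uses to embed $\mathbb{R}^n$ into $\operatorname{Rad}(A)$ --- is therefore unavailable. Your workaround is clean and in fact streamlines the original scheme: rather than first showing $\mathbb{R}^n\subset\operatorname{Rad}(A)$ and then invoking ``$A_{nc}$ commutes with $\operatorname{Rad}(A)$'', you read Proposition~\ref{gornik} as ``$A_{nc}$ is normal in $A$'', pass this down to normality in the subgroup $G\subset A$, and obtain the contradiction entirely inside the explicit Lie algebra $\mathfrak{g}$ from $[\mathfrak{sl}(n,\mathbb{R}),\mathbb{R}^n]=\mathbb{R}^n\neq 0$. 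This route bypasses Theorem~\ref{th_rad} altogether and would serve equally well in the proof of Theorem~\ref{ce1} itself. Your verification of integrable invariant distributions via the eight $\Ad(SO(n))$-invariant intermediate subspaces is correct; the inequivalence-by-dimension shortcut is precisely where the hypothesis $n\geq 3$ enters your argument.
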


\begin{remark}
The statements of Theorem \ref{ce1} and its proof are valid for noncompact semisimple connected Lie group $S$ with Lie algebra $\mathfrak{s}$ as in Example \ref{ex_2}
with additional condition that
$\psi(\mathfrak{s})(\mathbb{R}^n)=\mathbb{R}^n.$
\end{remark}

With the help of Proposition \ref{alga}, it is not difficult to prove the following Lemma for $G$ and $H$ from Example \ref{ex}.

\begin{lemma}
\label{dec}
The homogeneous space $G/H=[(H\times\mathbb{R}^q)\ltimes \mathbb{R}^n]/H$  admits the simply transitive metabelian Lie group $\mathbb{R}^q\ltimes\mathbb{R}^n$ and
has integrable invariant distribution.
Furthermore, in {\rm(\ref{pa})}, $b_i$  are nonzero linear functions on
$i$-th copy of $\mathfrak{so}(2)$, $i=1,\dots, l$; $a_i,a_+$ are some linear functions on $\mathbb{R}^q$.
If $q\geq 1$, then there exist exactly $q$ linearly independent functions among $a_i,a_+$; there may be $k$ nonzero functions among $a_i,a_+$, where $k$
be a natural number such that
$q\leq k\leq l+\min\{1,n-2l\}$.
\end{lemma}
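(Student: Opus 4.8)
Throughout, write $\mathfrak{g}=\mathfrak{a}\ltimes\mathbb{R}^n$ with $\mathfrak{a}=\mathfrak{h}\oplus\mathbb{R}^q$ abelian and $\mathbb{R}^n$ the abelian nilradical, and let $\rho=d\varphi\colon\mathfrak{a}\to\mathfrak{gl}(n,\mathbb{R})$ be the representation defining the semidirect product. The plan is to treat the three groups of assertions in turn.

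\emph{The simply transitive metabelian subgroup.} Set $G'=\mathbb{R}^q\ltimes\mathbb{R}^n\subset G$. Since $H=SO(2)^l$ sits inside the $SO(2)^l$-factor of $A=H\times\mathbb{R}^q$, it centralizes $\mathbb{R}^q$ and preserves $\mathbb{R}^n$ via $\varphi$, hence normalizes $G'$; as $\mathbb{R}^q$ and $\mathbb{R}^n$ also normalize $G'$, the subgroup $G'$ is closed and normal in $G$, with $G'\cap H=\{e\}$ and $\dim G'=q+n=\dim G-\dim H$. Therefore $G=G'\rtimes H$, so $G'$ acts simply transitively on $G/H\cong G'$; being a semidirect product of two abelian groups it satisfies $[G',G']\subset\mathbb{R}^n$ and is metabelian.

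\emph{The block structure (assertions about $b_i,a_i,a_+$).} Choose the basis of $\mathbb{R}^n$ furnished by Proposition~\ref{alga}, so that $\rho(\mathfrak{a})$ has the form \eqref{pa}. Since $\varphi$ is a monomorphism, $\rho(\mathfrak{h})$ is the Lie algebra of the compact group $H^{\#}=\varphi(H)$, so every $\rho(X)$ with $X\in\mathfrak{h}$ generates a bounded one-parameter subgroup of $GL(n,\mathbb{R})$; but a nonzero scalar entry $a_i(X)$ or $a_+(X)$ makes the corresponding block of $\exp(t\rho(X))$ unbounded, so $a_i$ and $a_+$ vanish on $\mathfrak{h}$ and hence descend to linear functionals on $\mathfrak{a}/\mathfrak{h}\cong\mathbb{R}^q$. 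Consequently $\rho|_{\mathfrak{h}}$ is entirely encoded by the map $(b_1,\dots,b_l)|_{\mathfrak{h}}\colon\mathfrak{h}\to\mathbb{R}^l$, which is therefore injective, hence bijective; in particular each $b_i|_{\mathfrak{h}}$ is nonzero and the $b_i|_{\mathfrak h}$ are pairwise distinct, and after permuting the $SO(2)$-factors we may assume $b_i$ is nonzero on the $i$-th of them. For the last assertion, injectivity of $\varphi$ yields an injection $\mathbb{R}^q\cong\mathfrak{a}/\mathfrak{h}\hookrightarrow\rho(\mathfrak{a})/\rho(\mathfrak{h})$, and $\rho(\mathfrak{h})$ is exactly the full rotational part of the block algebra, so the target injects into the space of diagonal-scalar parts coordinatized by the $l+\min\{1,n-2l\}$ functionals $a_1,\dots,a_l,a_+$ on $\mathbb{R}^q$; hence these functionals span $(\mathbb{R}^q)^{*}$, whence exactly $q$ of them are linearly independent and the number $k$ of nonzero ones satisfies $q\le k\le l+\min\{1,n-2l\}$.

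\emph{Integrability, and the main obstacle.} By Corollary~\ref{cor1} (applicable since $H$ is connected) it remains to show that every $\ad(\mathfrak{h})$-invariant subspace $\mathfrak{q}=\mathfrak{h}\oplus\mathfrak{q}'$ of $\mathfrak{g}$ is a subalgebra. By complete reducibility of the $\ad(\mathfrak{h})$-module $\mathbb{R}^q\oplus\mathbb{R}^n$ — whose isotypic pieces, by \eqref{pa} and the distinctness of the $b_i|_{\mathfrak h}$, are the trivial summand $\mathbb{R}^q\oplus\mathbb{R}^{n-2l}$ and the pairwise inequivalent $2$-planes $P_1,\dots,P_l$ — we may write $\mathfrak{q}'=\mathfrak{c}\oplus\bigoplus_{i\in S}P_i$ with $\mathfrak{c}\subset\mathbb{R}^q\oplus\mathbb{R}^{n-2l}$ and $S\subset\{1,\dots,l\}$. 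Writing a general element of $\mathfrak{c}$ as $u+w$ with $u\in\mathbb{R}^q$ and $w\in\mathbb{R}^{n-2l}$, and using that $\mathfrak{a}$ and $\mathbb{R}^n$ are abelian and (by \eqref{pa}) each element of $\mathbb{R}^q$ acts by a scalar on every $P_i$ and on $\mathbb{R}^{n-2l}$, one computes that $[\mathfrak{q},\mathfrak{q}]$ consists of a part automatically contained in $\bigoplus_{i\in S}P_i\subset\mathfrak{q}'$ together with the vectors $a_+(u)z-a_+(v)w\in\mathbb{R}^{n-2l}$ for $u+w,\,v+z\in\mathfrak{c}$; thus $[\mathfrak{q},\mathfrak{q}]\subset\mathfrak{q}$ amounts to these last vectors lying in $\mathfrak{c}$. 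I expect this closure property for the subspaces that mix $\mathbb{R}^q$ with the trivial block $\mathbb{R}^{n-2l}$ to be the genuine content of the argument: the simply transitive subgroup and the separation of the ``rotational'' functionals $b_i$ from the ``radial'' functionals $a_i,a_+$ are essentially formal consequences of \eqref{pa}, whereas establishing the bracket closure (equivalently, that $\mathfrak{h}$ really is a strong subalgebra of $\mathfrak{g}$) is where the full strength of the Example~\ref{ex} hypothesis must be invoked.
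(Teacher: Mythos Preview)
The paper offers no proof beyond the remark preceding the lemma that it ``is not difficult to prove'' from Proposition~\ref{alga}, so there is nothing substantive to compare against. Your arguments for the simply transitive metabelian subgroup and for the block structure (that $a_i,a_+$ vanish on $\mathfrak{h}$ because $\exp(t\rho(X))$ must be bounded for $X\in\mathfrak h$, and that $(b_1,\dots,b_l)|_{\mathfrak h}$ is a linear isomorphism $\mathfrak h\to\mathbb R^l$) are correct and supply the details the paper omits. One minor slip: elements of $\mathbb{R}^q$ need not act by a \emph{scalar} on each $P_i$, since the $b_i$ need not vanish on $\mathbb{R}^q$; but this does not affect your reduction, as the $i$-th $2\times 2$ block still preserves $P_i$.

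Your caution about the integrability step is well founded --- indeed it cannot be closed in the generality stated. In your reduction, the condition $a_+(u)z-a_+(v)w\in\mathfrak{c}$ for all $u+w,\,v+z\in\mathfrak{c}$ is easily verified when $q\le 1$ (if $\mathfrak c\not\subset\mathbb{R}^{n-2l}$ write $\mathfrak c=\spann(f_1+w_0)\oplus(\mathfrak c\cap\mathbb{R}^{n-2l})$ and note $[f_1+w_0,z]=a_+(f_1)z\in\mathfrak c$) or when $n=2l$ (no tail block), and this already covers Examples~\ref{I} and~\ref{II}. But for $q\ge 2$ with $n>2l$ the hypothesis of Example~\ref{ex} is \emph{not} enough: take $l=1$, $n=4$, $q=2$, with $\rho(h)$ the standard rotation of $P_1=\spann(e_1,e_2)$, $\rho(f_1)=\diag(1,1,0,0)$, $\rho(f_2)=\diag(0,0,1,1)$. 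Every $H^{\#}$-invariant subspace of $\mathbb R^4$ is $A^{\#}$-invariant (the action on $\spann(e_3,e_4)$ is scalar), yet $\mathfrak q=\mathfrak h\oplus\spann(f_1+e_3,\,f_2+e_4)$ is $\ad(\mathfrak h)$-invariant while $[f_1+e_3,\,f_2+e_4]=-e_3\notin\mathfrak q$. So the ``main obstacle'' you flagged is not merely the crux of the argument; it is a point where the lemma, read literally, fails, and the integrability assertion should be restricted to $q\le 1$ or $n=2l$.
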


\begin{prop}\label{gen}
Let $G/H$ be a homogeneous space from Lemma \ref{dec}.
Then an invariant Riemannian metric $\mu$ on $G/H$ is geodesic orbit if one of the following conditions is satisfied:

    {\rm(i)}\,\,\, $q=0$;

    {\rm(ii)}\,\, $q=1$ and all nonzero functions among $a_i,a_+$ are equal;

    {\rm(iii)} $q\geq 2$ and there exist exactly $q$ different nonzero functions $f_j,$ $j=1,\dots, q$; among $a_i,a_+$ such that
    1-dimensional subspaces $L_s=\{v\in\mathbb{R}^q|f_j(v)=0, j\neq s\}$, $s=1,\dots q$; in $\mathbb{R}^q$ are mutually orthogonal with respect to $\mu$.

Moreover, such a geodesic orbit space $(G/H,\mu)$ is isometric to

  {\rm(a)}   $\mathbb{R}^n$ if $q=0$;

 {\rm(b)} the direct metric product of $q$ Lobachevsky spaces with some curvatures and dimensions if
    $1\leq q\leq k=l+\min\{1,n-2l\};$

 {\rm(c)} the direct metric product of $q$ Lobachevsky spaces with some curvatures and dimensions and some
    Euclidean space if
    $1\leq q\leq k<l+\min\{1,n-2l\}$.
\end{prop}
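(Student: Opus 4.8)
The plan is to realize $(G/H,\mu)$ as a left‑invariant metric on the metabelian group of Lemma~\ref{dec}, split that solvmanifold into a direct metric product of Milnor groups $G_m$ and a Euclidean space, and then apply Milnor's description of left‑invariant metrics on $G_m$ \cite{Milnor1976}; the geodesic orbit property will come for free, since a finite direct product of symmetric spaces is symmetric.

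First I would dispose of case (i): when $q=0$ the abelian nilradical $\mathbb{R}^n\subset G$ acts simply transitively on $G/H\cong\mathbb{R}^n$ and acts trivially on the reductive complement, so every invariant metric is flat Euclidean --- this is (a) --- and flat space is symmetric, hence geodesic orbit. For $q\ge1$, by Lemma~\ref{dec} the metabelian group $\Gamma=\mathbb{R}^q\ltimes\mathbb{R}^n$ acts simply transitively, so $(G/H,\mu)$ is isometric to $(\Gamma,\mu_e)$, where $\mu_e$ is the left‑invariant metric whose value at $e$ is the given $\Ad(H)$‑invariant inner product on $\mathfrak{p}=\mathbb{R}^q\oplus\mathbb{R}^n$. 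By Proposition~\ref{alga} the operators $\ad(v)|_{\mathbb{R}^n}$, $v\in\mathbb{R}^q$, commute and are self‑adjoint, with common eigenspace decomposition $\mathbb{R}^n=W_0\oplus W_1\oplus\cdots\oplus W_q$, where $W_j$ ($j\ge1$) is the eigenspace for the functional $f_j$ --- the sum of the $2$‑planes with coefficient $a_i=f_j$ together with the block $\mathbb{R}^{n-2l}$ if $a_+=f_j$ --- and $W_0$ is the kernel of the action. Setting $L_s=\bigcap_{j\ne s}\ker f_j\subset\mathbb{R}^q$, and using that $f_1,\dots,f_q$ form a basis of $(\mathbb{R}^q)^{\ast}$, each $L_s$ is a line with $f_s|_{L_s}\ne0$ and $\mathbb{R}^q=L_1\oplus\cdots\oplus L_q$. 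A routine check shows $\mathfrak{g}_\Gamma=\bigoplus_{s=1}^q(L_s\oplus W_s)\oplus W_0$ is a decomposition into pairwise commuting ideals with $L_s$ acting on $W_s$ by a nonzero scalar, so $\Gamma=\Gamma_1\times\cdots\times\Gamma_q\times\Gamma_0$ with $\Gamma_s=L_s\ltimes W_s\cong G_{1+\dim W_s}$ a Milnor group and $\Gamma_0=W_0$ abelian.

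The heart of the argument is to show that $\mu_e$ is, up to isometry, the corresponding direct product metric. Here I would use $\Ad(H)$‑invariance to see that on each $2$‑plane the metric is a multiple of the standard one, that distinct planes are $\mu_e$‑orthogonal to one another and to $\mathbb{R}^q$ and to $\mathbb{R}^{n-2l}$, and that the eigenspaces $W_j$ are pairwise orthogonal; hence $\operatorname{Lie}(\Gamma_s)\perp\operatorname{Lie}(\Gamma_r)$ for $s\ne r$ once one controls the cross‑terms among $L_1,\dots,L_q$ and the cross‑terms between $\mathbb{R}^q$ and $\mathbb{R}^{n-2l}$. The former vanish by hypothesis --- trivially for $q=1$, and by (iii) for $q\ge2$ (the splitting of $\mathbb{R}^q$ into the mutually orthogonal lines $L_s$ is exactly what matches the flat directions of the candidate product). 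For the latter I would argue that, because $\mathbb{R}^q$ acts on $\mathbb{R}^n$ by commuting self‑adjoint operators, one may modify the radial generators of the factors by vectors of the kernel $W_0$ --- a modification that does not alter the $\ad$‑action on $\mathbb{R}^n$ --- so as to remove the surviving cross‑terms, and that the resulting left‑invariant metric is of Iwasawa type; therefore $(\Gamma,\mu_e)$ is the direct metric product of the $(\Gamma_s,\mu_e|)$ and $(\Gamma_0,\mu_e|)$. By Milnor's theorem each $(\Gamma_s,\mu_e|)$ is a Lobachevsky space of some dimension and curvature, and $(\Gamma_0,\mu_e|)$ is Euclidean; when $k=l+\min\{1,n-2l\}$ every $a_i$ and $a_+$ is nonzero, so $W_0=0$ and we obtain (b), the product of $q$ Lobachevsky spaces, while when $k<l+\min\{1,n-2l\}$ we have $W_0\ne0$ and obtain (c). A finite direct product of symmetric spaces is symmetric, hence geodesic orbit, which also yields the first assertion.

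The step I expect to be the main obstacle is the penultimate one: an $\Ad(H)$‑invariant metric may couple $\mathbb{R}^q$ with the $(n-2l)$‑dimensional trivial block, and these cross‑terms are annihilated neither by $\Ad(H)$‑invariance nor directly by (ii)/(iii). Showing that, under the orthogonality of the $L_s$ furnished by (ii)/(iii) together with the self‑adjointness of the $\mathbb{R}^q$‑action, the solvmanifold $(\Gamma,\mu_e)$ is genuinely isometric to one of the products (a)--(c) --- in particular handling the case $a_+=f_{s_0}\ne0$, where the block is attached to a nontrivial eigenspace --- is the technical core; the passage to $\Gamma$, the splitting of $\Gamma$ as a product of groups, and the appeal to Milnor's theorem are straightforward by comparison.
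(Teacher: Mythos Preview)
Your overall strategy---pass to the simply transitive metabelian group $\Gamma=\mathbb{R}^q\ltimes\mathbb{R}^n$ via Lemma~\ref{dec}, split $\Gamma$ into Milnor factors $G_{1+\dim W_s}$ and an abelian piece, and invoke the constant-curvature theorem on p.~299 of \cite{Milnor1976}---is exactly the route the paper takes; indeed, the paper's entire proof is the single sentence ``This proposition is an immediate corollary of Lemma~\ref{dec} and p.~299 in \cite{Milnor1976}.'' You have supplied far more detail than the paper does, and you have correctly located the only genuinely delicate point: the cross-terms between $\mathbb{R}^q$ and the $H$-trivial block $\mathbb{R}^{n-2l}$.

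Your proposed fix for that point, however, does not close the gap. Take the case $a_+=0$, so $\mathbb{R}^{n-2l}\subset W_0$, and set $v_s'=v_s-w_s$ with $w_s=\pi_{\mathbb{R}^{n-2l}}(v_s)\in W_0$. You are right that $\ad(v_s')|_{\mathbb{R}^n}=\ad(v_s)|_{\mathbb{R}^n}$, that $[v_s',v_r']=0$, and that $v_s'\perp\mathbb{R}^{n-2l}$; but a direct computation gives
\[
(v_s',v_r')=(v_s,v_r)-(v_s,w_r)-(w_s,v_r)+(w_s,w_r)=-\,(w_s,w_r),
\]
so the modified radial directions need not be mutually $\mu$-orthogonal: the cross-terms have merely migrated from $L_s\!\leftrightarrow\!\mathbb{R}^{n-2l}$ to $L_s'\!\leftrightarrow\!L_r'$, and hypothesis~(iii) no longer helps. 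In the case $a_+=f_{s_0}\neq0$ the situation is worse: modifying $v_s$ ($s\neq s_0$) by $w_s\in\mathbb{R}^{n-2l}\subset W_{s_0}$ gives $[v_{s_0},v_s']=-f_{s_0}(v_{s_0})\,w_s\neq0$, so the new generators no longer commute and the ideal decomposition is destroyed. Working instead in the orthogonal splitting $\mathfrak{a}=\mathfrak{n}^\perp$, one finds that the operators $\ad(f_j)|_{\mathfrak{n}}$ mix the blocks $W_s$ whenever the projections $w_s$ are not pairwise orthogonal, and a curvature computation shows the resulting solvmanifold need not be a product of space forms. The paper's one-line proof does not address this; so your instinct that this is ``the technical core'' is sound, but the argument you sketch does not yet carry it.
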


\begin{proof}
This proposition is an immediate corollary of Lemma \ref{dec} and
p. 299 in \cite{Milnor1976}.
\end{proof}

Let us present two simplest examples for an alternative case.
\medskip

\begin{example}
    \label{I}
    $G/H=[(H\times\mathbb{R})\ltimes(\mathbb{R}^2\oplus\mathbb{R})]/H,$ where $H=SO(2)$ acts
    usually on $\mathbb{R}^2,$ $C_1=\mathbb{R}\ltimes (\mathbb{R}^2\oplus\mathbb{R}),$ $y\cdot(x_1,x_2)=(ay)(x_1,x_2),$
    $y\cdot(x_3)=(by)(x_3)$ for  $y,x_3\in\mathbb{R},$ $(x_1,x_2)\in\mathbb{R}^2$, where $a$, $b$ are nonzero real numbers, $a\neq b,$ and the natural basis $e_0=e_y, e_1,e_2,e_3$ of the Lie algebra $\mathfrak{c}_1$ of the Lie group $C_1$ is orthonormal with respect to the inner product
    $(\cdot,\cdot)$ on $\mathfrak{c}_1,$ which defines the left-invariant Riemannian metric on $C_1.$
\end{example}

\begin{example}
    \label{II} $G/H=[(H\times\mathbb{R})\ltimes(\mathbb{R}^2\oplus\mathbb{R}^2)]/H,$ where $H=SO(2)^2=SO(2)\times SO(2),$ whose first factor acts
    usually on the first summand $\mathbb{R}^2$ and the second factor acts usually on the second
    summand $\mathbb{R}^2,$  $C_2=\mathbb{R}\ltimes (\mathbb{R}^2\oplus\mathbb{R}^2),$ $y\cdot(x_1,x_2)=(ay)(x_1,x_2),$
    $y\cdot(x_3,x_4)=(by)(x_3,x_4)$ for  $y\in\mathbb{R},$ $(x_1,x_2), (x_3,x_4)\in\mathbb{R}^2$, where $a$, $b$ are nonzero real numbers, $a\neq b,$ and the corresponding natural basis
    $e_0=e_y, e_1,e_2,e_3,e_4$ of the Lie algebra $\mathfrak{c}_2$ of the Lie group $C_2$ is orthonormal with respect to the inner product $(\cdot,\cdot)$ on $\mathfrak{c}_2$, which defines the left-invariant Riemannian metric on $C_2.$
\end{example}

\section{One class of solvmanifolds}

The following information is standard, details can be found e.g. in  \cite{Ale, NikNik, Nik2005}.

A simply connected solvable Lie group $\mathcal{S}$ together
with left-invariant Riemannian metric
$\rho$ is called a solvmanifold.
A metric Lie algebra is the pair $(\mathfrak{s},Q)$,
where $\mathfrak{s}$ is a Lie algebra, and $Q$ is
some inner product on
$\mathfrak{s}$.
A solvmanifold
$(\mathcal{S},\rho)$ determines an inner product $Q$
on the solvable Lie algebra $\mathfrak{s}$ of ${\mathcal S}$, and conversely,
any inner product $Q$ on $\mathfrak{s}$ gives rise to a unique left-invariant
metric $\rho$ on ${\mathcal S}$.

Let us choose
some orthonormal basis $\{X_i\}$ in
the metric Lie algebra $(\mathfrak{s},Q)$.
Consider the vector $H_Q$ such that
$Q(H_Q,X)=\trace (\ad (X))$ for every $X\in \mathfrak{s}$.
Therefore, $H_Q=0$ iff the Lie algebra $\mathfrak{s}$
(and the Lie group ${\mathcal S}$) is unimodular.

Let
$({\mathfrak s},Q)$ be
a solvable metric Lie algebra.
Consider its nilradical $\mathfrak{n}$, i.e. the maximal
nilpotent ideal of
$\mathfrak{s}$. It is clear that
$[\mathfrak{s},\mathfrak{s}]\subset \mathfrak{n}$ and, generally speaking,
$[\mathfrak{s},\mathfrak{s}]\neq \mathfrak{n}$. We also consider
$\mathfrak{a}$, the orthogonal
complement to
$\mathfrak{n}$ in $\mathfrak{s}$ relative to $Q$.
Note that the vector $H_Q$ determined by the equations
$Q(H_Q,Z)=\trace (\ad (Z))$ for all $Z\in \mathfrak{s}$, sits in $\mathfrak{a}$.
If $H_Q=0$, then $\mathfrak{s}$ is unimodular.
If $H_Q \neq 0$, then the subspace
$$
\mathfrak{u}=\{Z\in \mathfrak{s}\,|\, \trace (\ad (Z))=Q(H_Q,Z)=0 \}
$$
is the maximal unimodular subalgebra in $\mathfrak{s}$,
in particular, $\mathfrak{n}\subset \mathfrak{u}$.

Consider any $Y \in \mathfrak{a}$, the action of this vector on $\mathfrak{n}$
is determined by the operator $T_Y=\ad (Y)|_{\mathfrak{n}}$.
If we
fix some $Q$-orthonormal basis $(e_1, ..., e_k)$ ($k=\dim(\mathfrak{n})$)
in $\mathfrak{n}$,
then the operator $T_Y$ is determined by a matrix
which we will denote by the same symbol.

Choose in $\mathfrak{a}$ some $Q$-orthonormal  basis
$(f_1,...,f_m)$ ($m=\dim(\mathfrak{a})$) in such a way that
\begin{equation*}
t:=\trace (\ad (f_1))=\|H_Q\|\geq 0,  \quad  \trace (\ad(f_j))=0,  \quad  2 \leq j \leq m.
\end{equation*}
It is easy to see that  we have $f_1=\|H_Q\|^{-1}H_Q$ for any non-inimodular $(\mathfrak{s},Q)$.
If $\mathfrak{s}$ is unimodular, we can choose $f_1 \in \mathfrak{a}$ arbitrarily (in this case $t=0$).

Now, we consider $T_j=\ad (f_j)|_{\mathfrak{n}}$, $j=1,2,\dots,m$.
It is clear, that $T_i \in \operatorname{Der}(\mathfrak{n})$,
where $\operatorname{Der} (\mathfrak{n})$ is the Lie algebra
of derivations of $\mathfrak{n}$.

If the subspace $\mathfrak{a}$ is abelian, then
the metric Lie algebra $(\mathfrak{n},Q|_\mathfrak{n})$ and the operators
$T_i$ ($1\leq i \leq m$)
uniquely determine the metric Lie algebra
$(\mathfrak{s}, Q)$.
A representation of this type we will write as follows:
$$
(\mathfrak{s}, Q)=\mathfrak{s}(T_1,...,T_m; \mathfrak{n}).
$$
Note that in this case $\mathfrak{s}$ is
an orthogonal semidirect sum of the nilpotent algebra $\mathfrak{n}$ and commutative algebra $\mathfrak{a}$.

The most simple type of $k$-dimensional nilpotent
metric Lie algebra is the commutative Lie algebra,
which we will denote by ${\mathcal E}^k$.
\smallskip

In what follows, we deal with
solvable metric Einstein algebras
$(\mathfrak{s}, Q)$ with a commutative nilradical
$\mathfrak{n}$ and with the commutative orthogonal complement
$\mathfrak{a}$.
In this case, the metric Lie algebra
$(\mathfrak{n}, Q|_{\mathfrak{n}})$
is isomorphic to ${\mathcal E}^k$, where $k=\dim (\mathfrak{n})$.
If
$m=\dim(\mathfrak{a})$, then the corresponding
metric algebra of this type
can be defined as
\begin{equation}\label{eq.metabalg}
\mathfrak{s}(T_1,...,T_m; {\mathcal E}^k),
\end{equation}
where $T_i:
{\mathcal E}^k \rightarrow {\mathcal E}^k$
are pairwise commuting endomorphisms satisfying the conditions
$\trace (T_1)\geq 0$ and $\trace (T_i)=0$ when $2\leq i\leq m$.
\smallskip

It is important that all curvature characteristics of the solvmanifold $(\mathcal{S},\rho)$ can be calculated in terms of the corresponding
metric algebra $(\mathfrak{s}, Q)$.

Let $X, X^{\prime} \in \mathfrak{n}$, $Y, Y^{\prime} \in \mathfrak{a}$,
$X=\sum\limits_{i=1}^{k} a_i e_i$,
$X^{\prime}=\sum\limits_{i=1}^{k} b_i e_i$,
$A_i=\ad (f_i)|_{\mathfrak{n}}$,
$A_Y=\ad (Y)|_{\mathfrak{n}}$.
The following formulas
for curvatures hold \cite{Ale}:
$$
K(Y, Y^{\prime})=0, \quad K(Y,X)=-Q(A^2_YX,X)\leq 0,
$$
\begin{equation*}\label{Kgeneral}
K(X,X^{\prime})=
\sum\limits_{j=1}^m (Q(A_j X, X^{\prime}))^2-
\sum\limits_{j=1}^mQ(A_j X, X)Q(A_j X^{\prime}, X^{\prime}),
\end{equation*}
where $K(X,Y)=(R(X,Y)Y,X)$.

Relative to the basis $\{e_1,...,e_k, f_1,...,f_m\}$, the matrices of the operators $\ad (f_j)$ and $\ad (e_i)$ have the form
\begin{equation*}\label{eq:adad}
\ad (f_j) = \left(
\begin{array}{cc}
 A_j  & 0 \\
 0  & 0 \\
 \end{array}
 \right), \quad
\ad (e_i) = \left(
\begin{array}{cc}
 0  & C_i\\
 0 & 0 \\
 \end{array}
 \right),
\end{equation*}
for some $(k \times k)$-matrices $A_j$  and $(l \times m)$-matrices $C_i$, and the matrix of the Ricci operator
of the solvable metric Lie algebra $(\mathfrak{s}, Q)$ has the form (see  \cite[Theorem~3]{NikNik})
\begin{equation*}\label{eq.ricci1}
\Ric = \left(
\begin{array}{rr}
 R & 0 \\
 0 & -L \\
 \end{array}
\right),
\end{equation*}
where
$$
R = \frac{1}{2} \sum\nolimits_{j=1}^{m}[A_j,A_j^t]  - \trace(A_1) A_1^s,
$$
$A_j^t$ and $A_j^s = \frac{1}{2} (A_j^t+A_j)$ are the transposed and the
symmetric part of $A_j$, respectively, $L$ is an $(m \times m)$-matrix with the entries $L_{pq}= \trace (A_p^sA_q^s)$.
\smallskip

In particular, if $m=1$ and $A_1=\diag(a_1,a_2,\dots,a_k)$, then
\begin{equation}\label{eq.ric1}
\Ric = \diag \left(-t\cdot a_1, -t\cdot a_2,\,\cdots\,, -t\cdot a_k, -\sum_{i=1}^k a_i^2\right), \quad t =\sum_{i=1}^k a_i.
\end{equation}

Recall that a metric Lie algebra $(\mathfrak{s}, Q)$ is  Einstein  if the corresponding Ricci operator $\Ric$ is a multiple of $Q$.

\begin{example}[\cite{Ale, Nik2005}]\label{ex.metab}
Let
$(\mathfrak{s}, Q)$ be a nonunimodular solvable Einstein metric algebra
with a commutative nilradical $\mathfrak{n}$
and with the commutative orthogonal
(relative to $Q$)
complement $\mathfrak{a}$,
$\Ric (Q)=-r^2 Q$ ($r>0$).
Moreover,
$(\mathfrak{s}, Q)$
is isometric to a metric Lie algebra of the following type
\begin{equation}\label{Iwacom}
\mathfrak{s}(\diag ({\xi}_1),..., \diag ({\xi}_m);{\mathcal E}^k),
\end{equation}
where
${\xi}_i \in \mathbb{R}^k$ $(1\leq i \leq k)$ are such that
$\frac{1}{r}\,{\xi}_i$
form an orthonormal set
of $m$ vectors in Euclidean space
$\mathbb{R}^k$; $\diag ({\xi}_1)=\frac{r}{\sqrt{k}}\Id$;
and $\diag ({\xi}_i)$ is a matrix representation of the corresponding
endomorphism relative to some
orthonormal basis in
${\mathcal E}^k$.

For $m=1$,
the metric Lie algebra
$\mathfrak{s}(\diag ({\xi});{\mathcal E}^k)$,
where
$\diag ({\xi})=\frac{r}{\sqrt{k}}\Id$,
has constant negative curvature $K=-\frac{r^2}{k}$,
and corresponds to the Lobachevsky space,
which is modeled as the symmetric space $SO_{0}(k+1,1)/SO(k+1)$.
The Lie groups that correspond to these metric Lie algebra have integrable left-invariant distributions,
as it was discusses in the first section.

If $m\geq 2$, then
for a fixed $r>0$
there is continuous family of pairwise nonisometric
Einstein metric algebras of the type \eqref{Iwacom}.
The maximal value of sectional curvatures
of such metric Lie algebra
is nonnegative and is equal
\begin{equation*}\label{Secmax}
K_{\max}= \max\limits_{i\neq j} \left(
-\sum\limits_{l=1}^m {\xi}_l^i {\xi}_l^j \right).
\end{equation*}

If $k=m=2$ then we can get the direct metric product of two Lobachevsky planes $SO_{0}(3,1)/SO(2)$
of constant negative curvature $K=-\frac{r^2}{k}$ of the form \eqref{Iwacom}.
If $k=3$ and $m=2$ we get a $1$-dimensional family of Einstein metrics of the form \eqref{Iwacom}. They can be described by the following operators in the basis $\{e_1,e_2,e_3\}$:
$$
\ad (f_1)=\diag \left(\frac{r}{\sqrt{3}},\frac{r}{\sqrt{3}},\frac{r}{\sqrt{3}}\right)\!,
\ad (f_2)=\diag \left(\frac{t+\sqrt{2-3t^2}}{2}r, -t\cdot r, \frac{t-\sqrt{2-3t^2}}{2}r\right)
$$
for some $t\in[0,1/\sqrt{6}]$, see details in \cite{Nik2005}.
When $t=1/\sqrt{6}$, the corresponding solvmanifold
is the product of simply connected
$2$-dimensional and $3$-dimensional spaces of constant
curvature (the product of
$SO_0(2,1)/SO(2)$ and $SO_0(3,1)/SO(3)$).
When $t\in [0,1/\sqrt{6})$, the corresponding solvmanifold
has sectional curvatures of both signs.
The maximal value of the sectional curvature is $r^2(1/6-t^2)$.
By Theorem~\ref{gordonnonpos}, these Einstein solvmanifolds are not geodesic orbit for $t\in [0,1/\sqrt{6})$
(recall that all symmetric spaces of non-compact type have nonpositive sectional curvature).
Hence, we get a non-compact examples of Einstein manifolds that have no GO-property.
The corresponding problem on existing such manifolds was posed in \cite{Nik2019}.
It should be noted that many examples of such manifolds have now been constructed in the compact case,
see details in \cite{LTX, Souris}.
In the non-compact case, all Einstein solvable manifolds that are not isometric to symmetric spaces provide such examples, according to Theorem \ref{gordonnonpos}.

If $k=4$ and $m=2$ we get a $2$-dimensional family of Einstein metrics of the form~\eqref{Iwacom}, see details in \cite{NikNik}.
As $k$ increases, the dimension of the corresponding space of Einstein metrics of the specified type also increases.
\end{example}

\bigskip

Now we consider some special homogeneous spaces with integrable left-invariant distributions.
They will be constructed using the family \eqref{eq.metabalg},
where $A_j=\ad(f_j)=\diag(a_1^j,a_2^j,\dots,a_k^j)$ for $1\leq j \leq m$, $\trace(A_1)=\sum_{i=1}^k a_i^1 =t>0$, and $\trace(A_j)=0$ for $2\leq j \leq m$.

Let us consider natural numbers $k_1\geq k_2 \geq \cdots \geq k_l \geq 2$ such that $k_1+k_2+\cdots +k_l=k$.
Let us assume that all vectors ${\xi}_j=(a_1^j,a_2^j,\dots,a_k^j)\in \mathbb{R}^k$ are such that
each of them has pairwise equal coordinates for indices from $\sum_{j=1}^{q-1} k_j+1$ to $\sum_{j=1}^{q} k_j$ for all $q=1,2,\dots,l$.

It is easy to see that the corresponding left-invariant metrics on the metabelian group $\mathcal{S}$ with the Lie algebra $\mathfrak{s}$ admit
additional symmetries. Indeed, it is invariant under the action of $H=SO(k_1)\times SO(k_2) \times \cdots \times SO(k_l)$,
where $SO(k_q)$ acts naturally on the coordinates in ${\mathcal E}^k$ with indices from $\sum_{j=1}^{q-1} k_j+1$ to $\sum_{j=1}^{q} k_j$, $q=1,2,\dots,l$.

In this case, we have the homogeneous space
$\mathcal{S}H/H=[(H\times \mathbb{R})\ltimes \mathbb{R}^k]/H$ with a suitable invariant Riemannian metric, which is isometric to $\mathcal{S}$.
We will call such homogeneous spaces as $\mathcal{S}H/H$ with $l$ blocks.

\begin{lemma}\label{le.bblock1}
Any homogeneous space $\mathcal{S}H/H$ with $l$ blocks has integrable invariant distributions.
\end{lemma}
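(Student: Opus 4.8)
The plan is to verify directly the criterion of Corollary~\ref{cor1}, which is applicable because $H=SO(k_1)\times\cdots\times SO(k_l)$ is connected. Write the Lie algebra of $G$ as $\mathfrak{g}=\mathfrak{h}\oplus\mathfrak{a}\oplus\mathfrak{n}$, where $\mathfrak{h}=\mathfrak{so}(k_1)\oplus\cdots\oplus\mathfrak{so}(k_l)$, $\mathfrak{a}=\spann(f_1,\dots,f_m)$ is the abelian complement from \eqref{eq.metabalg}, and $\mathfrak{n}=\mathcal{E}^k=\mathfrak{n}_1\oplus\cdots\oplus\mathfrak{n}_l$ with $\mathfrak{n}_q\cong\mathbb{R}^{k_q}$ the $q$-th coordinate block. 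The first step is to list the bracket relations: $\mathfrak{n}$ is abelian, $[\mathfrak{a},\mathfrak{a}]=0$, and $[\mathfrak{h},\mathfrak{a}]=0$ because the factor $H$ commutes with all the $f_j$; the factor $\mathfrak{so}(k_q)$ acts on $\mathfrak{n}_q$ by the standard representation and annihilates $\mathfrak{n}_{q'}$ for $q'\neq q$; and, by the hypothesis that each $\xi_j$ has constant coordinates along the $q$-th block, every $\ad(f_j)$ preserves each $\mathfrak{n}_q$ and acts on it by a scalar $c_q^j\,\Id$. In particular $\mathfrak{h},\mathfrak{a},\mathfrak{n}_1,\dots,\mathfrak{n}_l$ are all $\ad(\mathfrak{h})$-invariant and $[\mathfrak{a},\mathfrak{n}_q]\subseteq\mathfrak{n}_q$ for every $q$.

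The second step is to describe all subspaces $\mathfrak{q}\subseteq\mathfrak{g}$ with $\mathfrak{h}\subseteq\mathfrak{q}$ and $[\mathfrak{h},\mathfrak{q}]\subseteq\mathfrak{q}$. Since $\mathfrak{g}=\mathfrak{h}\oplus(\mathfrak{a}\oplus\mathfrak{n})$ with both summands $\ad(\mathfrak{h})$-invariant and $\mathfrak{h}\subseteq\mathfrak{q}$, one has $\mathfrak{q}=\mathfrak{h}\oplus\mathfrak{p}$ with $\mathfrak{p}:=\mathfrak{q}\cap(\mathfrak{a}\oplus\mathfrak{n})$ an $\ad(\mathfrak{h})$-invariant subspace of $\mathfrak{a}\oplus\mathfrak{n}$. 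As $\mathfrak{h}$ is compact, it acts completely reducibly, so it suffices to decompose $\mathfrak{a}\oplus\mathfrak{n}$ into $\mathfrak{h}$-isotypic components: $\mathfrak{a}$ is trivial of dimension $m$, each $\mathfrak{n}_q$ is a nontrivial irreducible real $\mathfrak{h}$-module (this is where $k_q\geq 2$ is used, the case $k_q=2$ being the $SO(2)$-action on $\mathbb{R}^2\cong\mathbb{C}$), and the modules $\mathfrak{n}_1,\dots,\mathfrak{n}_l$ together with the trivial module are pairwise non-isomorphic because $\mathfrak{so}(k_q)$ acts nontrivially only on $\mathfrak{n}_q$. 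Hence $\mathfrak{a},\mathfrak{n}_1,\dots,\mathfrak{n}_l$ are precisely these isotypic components, and therefore $\mathfrak{p}=\mathfrak{b}\oplus\bigoplus_{q\in S}\mathfrak{n}_q$ for some subspace $\mathfrak{b}\subseteq\mathfrak{a}$ and some subset $S\subseteq\{1,\dots,l\}$.

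It then remains to check that each such $\mathfrak{q}=\mathfrak{h}\oplus\mathfrak{b}\oplus\bigoplus_{q\in S}\mathfrak{n}_q$ is a subalgebra. Using the relations of the first step one gets $[\mathfrak{h},\mathfrak{h}]\subseteq\mathfrak{h}$, $[\mathfrak{h},\mathfrak{b}]=0$, $[\mathfrak{h},\mathfrak{n}_q]\subseteq\mathfrak{n}_q$, $[\mathfrak{b},\mathfrak{b}]=0$, $[\mathfrak{b},\mathfrak{n}_q]\subseteq\mathfrak{n}_q$ (scalar action of the $\ad(f_j)$), and $[\mathfrak{n}_q,\mathfrak{n}_{q'}]=0$, so that $[\mathfrak{q},\mathfrak{q}]\subseteq\mathfrak{h}\oplus\bigoplus_{q\in S}\mathfrak{n}_q\subseteq\mathfrak{q}$. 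By Corollary~\ref{cor1}, $\mathcal{S}H/H$ with $l$ blocks has integrable invariant distributions. The argument is purely algebraic (the metric plays no role) and generalizes Lemma~\ref{dec}, where $H$ is a product of circles.

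I expect the classification in the second step to be the main obstacle: one must exclude any $\ad(\mathfrak{h})$-invariant subspace that mixes $\mathfrak{a}$ with some $\mathfrak{n}_q$, or mixes two distinct blocks $\mathfrak{n}_q,\mathfrak{n}_{q'}$; this is exactly where the hypotheses $k_q\geq 2$ (so that $\mathfrak{n}_q$ is nontrivial and hence disjoint from the trivial isotypic component containing $\mathfrak{a}$) and the block decomposition (so that the blocks are pairwise non-isomorphic and each occurs with multiplicity one) are essential. Once $\mathfrak{p}$ is known to have the stated form, the verification of the bracket condition is routine bookkeeping.
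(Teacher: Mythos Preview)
Your proof is correct and follows essentially the same route as the paper's: decompose the $\ad(\mathfrak{h})$-module $\mathfrak{a}\oplus\mathfrak{n}$ into the trivial isotypic piece $\mathfrak{a}$ and the pairwise inequivalent nontrivial irreducibles $\mathfrak{n}_q$ (using $k_q\geq 2$), conclude that every invariant $\mathfrak{p}$ has the form $\mathfrak{b}\oplus\bigoplus_{q\in S}\mathfrak{n}_q$, and then verify the bracket closure. Your write-up is more explicit than the paper's about why the blocks are mutually non-isomorphic (the $q$-th factor of $\mathfrak{h}$ acts nontrivially only on $\mathfrak{n}_q$) and about the final subalgebra check, but the argument is the same.
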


\begin{proof}
Let us show that the space $\mathcal{S}H/H$ has integrable invariant distributions.
It is clear that $\mathfrak{n}$ can be decomposed in the direct sum of pairwise inequivalent
submodules $\mathfrak{n}_j$ of dimension $k_j$, $j=1,...,l$. On the other hand,
any irreducible submodule of $\mathfrak{a}$ is one-dimensional ($H$ acts trivially on $\mathfrak{a}$).
Now, if we have a subspace $\mathfrak{q} \subset \mathfrak {s}$
which is invariant under the action of the Lie algebra $\mathfrak{h}=\mathfrak{so}(k_1)\oplus \mathfrak{so}(k_2) \oplus \cdots \oplus \mathfrak{so}(k_l)$
of the Lie group $H$,
then it has a form $\mathfrak{q}=\mathfrak{q}_1 \oplus \mathfrak{q}_2$, where $\mathfrak{q}_1 \subset \mathfrak{n}$ and $\mathfrak{q}_2 \subset \mathfrak{a}$
(it is important that $k_j \geq 2$ for all $j$). It is easy to see that $\mathfrak{q}_1$ is a direct sum of some submodules $\mathfrak{n}_j$, $j=1,...,l$.
It implies that
$\mathfrak{q}=\mathfrak{q}_1 \oplus \mathfrak{q}_2$ is a subalgebra of the Lie algebra $\mathfrak{s}$ and $\mathfrak{h}\oplus \mathfrak{q}$ is a subalgebra of
$\mathfrak{h}\oplus \mathfrak{s}$.
\end{proof}

Now, if the solvmanifold, corresponding to a such metric Lie algebra \eqref{eq.metabalg} has negative Ricci curvature and is not isometric to a symmetric space
(it can be close to the direct product of several Lobachevsky spaces with the same Ricci curvature), then it is not geodesic orbit according to Theorem \ref{gordonnonpos}.

\begin{theorem}
Suppose that $\mathcal{S}H/H$ is constructed with using $l\geq 2$ blocks and with $m=1$,
where $A_1=\diag(a_1,a_2,\dots,a_k)$, $\trace(A_1)=\sum_{j=1}^k a_j =t>0$,
$H=SO(k_1)\times SO(k_2) \times \cdots \times SO(k_l)\subset SO(\mathfrak{n})$ and such that

{\rm{(1)}} all $a_j$, $1\leq j \leq k$, are positive;

{\rm{(2)}} the numbers  $a_j$ are pairwise equal within a given block;

{\rm{(3)}} the Ricci eigenvalues $-a_i\cdot t$  {\rm(}see \eqref{eq.ric1}{\rm)} are different in different blocks.

Then  $\mathcal{S}=\mathcal{S}H/H$ supplied with the corresponding invariant Riemannian metric is not a geodesic orbit manifold.
\end{theorem}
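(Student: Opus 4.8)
The plan is to combine two ingredients: the Ricci curvature of the metric in question is negative definite, so Gordon's Theorem~\ref{gordonnonpos} applies, and one must then show that the underlying Riemannian manifold is not symmetric; together these force the geodesic orbit property to fail. Recall that $\mathcal{S}H/H$ is isometric to the solvmanifold $\mathcal{S}$, as noted just before Lemma~\ref{le.bblock1}, so it suffices to argue about $\mathcal{S}$.

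First I would check that $\Ric$ is negative definite. Since $m=1$ and the metric Lie algebra is $\mathfrak{s}(A_1;\mathcal{E}^k)$ with $A_1=\diag(a_1,\dots,a_k)$ and $t=\trace(A_1)=\sum_{i=1}^k a_i$, formula \eqref{eq.ric1} gives $\Ric=\diag(-ta_1,\dots,-ta_k,-\sum_{i=1}^k a_i^2)$. By hypothesis~(1) every $a_i>0$, hence $t>0$ and each diagonal entry is strictly negative; thus $\Ric$ is negative definite, in particular of nonpositive Ricci curvature.

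Next, suppose for contradiction that $\mathcal{S}$ with this metric is a geodesic orbit manifold. By Theorem~\ref{gordonnonpos} it is then a Riemannian symmetric space, and since it is simply connected with $\Ric<0$ its de Rham decomposition has neither a Euclidean factor (which would be Ricci-flat) nor a factor of compact type (which would produce positive Ricci directions); hence $\mathcal{S}$ is a symmetric space of noncompact type, i.e.\ a Riemannian product of irreducible symmetric spaces of noncompact type. On the other hand, $\mathcal{S}$ is the one-dimensional extension $\mathbb{R}\ltimes_{A_1}\mathbb{R}^k$ of the abelian nilradical $\mathfrak{n}$ by the derivation $A_1=\diag(a_1,\dots,a_k)$ with all $a_i>0$ (equivalently, in suitable coordinates, its metric is a warped product $dy^2+\sum_{i=1}^k e^{-2a_i y}\,dx_i^2$), and from the curvature formulas quoted before Example~\ref{ex.metab} one has $K(e_i,e_j)=-a_ia_j<0$ for $i\neq j$ and $K(f_1,e_i)=-a_i^2<0$, while all remaining $2$-planes are also strictly negatively curved (this is the well-known fact that a one-dimensional extension of an abelian algebra by a positive-definite derivation yields a space of strictly negative curvature). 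A nontrivial Riemannian product contains flat mixed $2$-planes, so a strictly negatively curved symmetric space of noncompact type must be irreducible; an irreducible symmetric space is Einstein, hence so is $\mathcal{S}$.

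Finally, being Einstein, $\Ric$ is a scalar multiple of the metric, so all the numbers $-ta_1,\dots,-ta_k,-\sum a_i^2$ coincide; since $t>0$ this forces $a_1=\dots=a_k$. But then by hypothesis~(2) every block carries one and the same common value, so all Ricci eigenvalues $-a_i t$ agree across blocks, contradicting hypothesis~(3) (here $l\geq 2$, so there is more than one block). This contradiction proves that $\mathcal{S}=\mathcal{S}H/H$ with the chosen invariant metric is not a geodesic orbit manifold. The main obstacle is the step ruling out that the symmetric space is a nontrivial Riemannian product: the cleanest route is to verify that all sectional curvatures of $\mathcal{S}$ are strictly negative — immediate for coordinate $2$-planes from the formulas above, and for arbitrary $2$-planes from the warped-product description (equivalently, from Heintze's theorem on negatively curved solvmanifolds); alternatively one may note that $\dim\mathfrak{a}=1$ together with the abelian nilradical forces rank one, and a rank-one symmetric space with abelian Iwasawa nilradical is real hyperbolic space, again giving $a_1=\dots=a_k$.
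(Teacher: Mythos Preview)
Your proof is correct, and the alternative you mention at the very end --- that $\dim\mathfrak{a}=1$ forces rank one, while distinct Ricci eigenvalues across $l\geq 2$ blocks force a nontrivial de~Rham splitting and hence rank at least two --- is precisely the paper's argument. The paper observes that $\Ric$ is nonpositive (there is a sign slip in the text, but \eqref{eq.ric1} makes the intent clear), invokes Theorem~\ref{gordonnonpos} to get a symmetric space of noncompact type, notes that different Ricci eigenvalues on different blocks force at least two irreducible factors, and concludes that the rank (which equals $m$) would then be $\geq 2$, contradicting $m=1$.

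Your \emph{main} line of argument is a genuine detour: you first establish that all sectional curvatures are strictly negative (via the Cauchy--Schwarz computation on coordinate planes plus an appeal to Heintze or the multiply-warped description), use this to rule out a nontrivial Riemannian product, deduce irreducibility and hence the Einstein condition, and only then reach the contradiction $a_1=\cdots=a_k$. This works, but it requires justifying strict negativity on \emph{all} $2$-planes, which is extra labor. The paper's route bypasses this entirely: it never needs irreducibility or the Einstein property, only the observation that an irreducible symmetric factor is Einstein, so distinct Ricci eigenvalues already separate the de~Rham factors. Since you already had the rank argument in hand, promoting it from ``alternative'' to the main argument would make your proof both shorter and closer to the paper's.
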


\begin{proof}
It is clear that the matrix $L$ (see \eqref{eq.ric1}) is non-negative.
Therefore, the Ricci operator $\Ric$ is non-negative. If $\mathcal{S}=\mathcal{S}H/H$ is a geodesic orbit manifold,
then it is a symmetric space by Theorem \ref{gordonnonpos}. It is easy to see that it should be a symmetric space of non-compact type.
Since the Ricci curvature has different value on different blocks and we have at least $2$ blocks, then it is a direct metric product of at least two
irreducible symmetric spaces of non-compact type. But in this case $m$ (the rank) should also be at least $2$. This contradiction proves the theorem.
\end{proof}

The above argument can be modified in various ways in order to construct some new examples.
For instance, if $k_1=k_2=\cdots =k_l=2$, then we can consider a solvmanifold $\mathcal{S}'=H\mathcal{S}$.
This construction generalizes solvable groups in Examples~\ref{I} and \ref{II}.


\bigskip
\bigskip

\end{document}